\pgfplotsset{compat=1.15}
\numberwithin{equation}{section}
\newtheorem{thm}{Theorem}[section]
\newtheorem{cor}[thm]{Corollary}
\newtheorem{lem}[thm]{Lemma}
\newtheorem{prop}[thm]{Proposition}
\theoremstyle{definition}
\newtheorem{defn}[thm]{Definition}
\newtheorem{conv}[thm]{Convention}
\newtheorem*{defn*}{Definition}
\newtheorem{rmk}[thm]{Remark}
\newtheorem*{rmk*}{Remark}
\newtheorem*{ack}{Acknowledgment}
\newcommand\longmapsfrom{\mathrel{\reflectbox{\ensuremath{\longmapsto}}}}
\newcommand{\1}{\mathbf{1}}
\newcommand{\Bs}{\mathscr{B}}
\newcommand{\N}{\mathbb{N}}
\newcommand{\Pc}{\mathcal{P}}
\newcommand{\R}{\mathbb{R}}
\newcommand{\Ss}{\mathscr{S}}
\newcommand{\Z}{\mathbb{Z}}
\newcommand{\eps}{\varepsilon}
\newcommand{\Span}{\operatorname{span}}
\newcommand{\supp}{\operatorname{supp}}
\title[Non-compact Besov Embeddings]{A Classification Theorem on Non-compact Embeddings between Besov Spaces}
\author[Chian Yeong Chuah]{Chian Yeong Chuah}
\address{Department of Mathematics, University of Georgia, Athens, GA 30602}
\email{chianyeong.chuah@uga.edu}
\author[Jan Lang]{Jan Lang}
\address{\parbox{\linewidth}{Department of Mathematics, The Ohio State University, Columbus, OH, United States\\
		Department of Mathematics, Faculty of Electrical Engineering, Czech Technical University in Prague, Czech Republic\vspace{0.055in}}}
\email{lang@math.osu.edu}
\author[Liding Yao]{Liding Yao} 
\address{Department of Mathematical Sciences,
	Purdue University Fort Wayne, Fort Wayne, IN 46805} 
\email{yao388@pfw.edu}
\subjclass[2020]{47B06 (primary) 46E35 and 46B45 (secondary)} 
\begin{document}
	
	\begin{abstract}
		We analyze the embedding properties between Besov spaces, defined on the total space $\mathbb R^n$ and on bounded domains. We give a complete classification on whether or not these embedding maps satisfy certain weak compactness characterized by the so-called strictly and finitely strictly singular condition. The result extends the recent findings on Sobolev embeddings by offering a refined description of the quality of non‐compactness in the setting of Besov spaces.
	\end{abstract}
	
	\maketitle
	
	\section{Introduction}
	
	It is well known that Sobolev embeddings on domains (say a bounded Lipschitz domain $\Omega\subset\R^n$) into the optimal Lebesgue space and the optimal Lorentz space
	\begin{align*}\label{Sobolev 1}
		& W^{k,p}(\Omega) \hookrightarrow L^\frac{np}{n-kp}(\Omega) \quad  \text{and} \quad W^{k,p}(\Omega) \hookrightarrow  L^{\frac{np}{n-kp},p}(\Omega),
	\end{align*}
	where \(1 \leq k < n\) and \(p \in [1, n/k)\) are non-compact. 
	Given that \(L^{\frac{np}{n-kp}}(\Omega) \subsetneq L^{\frac{np}{n-kp},p}(\Omega)\), one might expect a significant difference in the ``quality'' of non-compactness for these embeddings. However, standard measures of non-compactness, such as the decay of entropy numbers or Kolmogorov numbers, fail to differentiate between these embeddings, as both are maximally non-compact (i.e., the measure of non-compactness coincides with the norm of the embeddings, see \cite{LMOP}).
	
	In \cite{LangMihula}, it has been observed that the first Sobolev embedding is finitely strictly singular (i.e., the Bernstein numbers decay to 0), while the second Sobolev embedding is not strictly singular, with all Bernstein numbers equal to the norm of the embedding. This observation suggests that strict singularity and finitely strict singularity could provide a more refined distinction between non-compact embeddings, offering a better description of the ``quality'' of non-compactness comparable with the measure of non-compactness.
	
	The above observations regarding Sobolev embeddings naturally lead to the question of studying the ``quality'' of non-compactness for embeddings between other function spaces.
	
	In this paper, we consider the embeddings between Besov spaces \[ \Bs_{p_0q_0}^{s_0}\subset \Bs_{p_1q_1}^{s_1}, \mbox{ both on bounded domains and on } \R^n. \]
	We provide a comprehensive and complete classification of the ``quality'' of non-compactness for the embeddings between Besov spaces in the context of strict singularity and finitely strict singularity, as demonstrated in the following two theorems which are the main results of this paper.

	For embeddings between Besov spaces on bounded domains, we obtain:
	\begin{thm}\label{Thm::ClassifyDom}
		Let $p_0,p_1,q_0,q_1\in(0,\infty]$, $s_0,s_1\in\R$. Let $\Omega\subset\R^n$ be a bounded Lipschitz open subset. Consider the embedding $\Bs_{p_0q_0}^{s_0}(\Omega)\hookrightarrow \Bs_{p_1q_1}^{s_1}(\Omega)$.
		\begin{enumerate}[(i)]
			\item\label{Item::ClassifyDom::Not} $\Bs_{p_0q_0}^{s_0}(\Omega)\not\subset \Bs_{p_1q_1}^{s_1}(\Omega)$, i.e. there is no embedding, if and only if:
			\begin{itemize}
				\item $s_0-s_1<\max(0,\frac n{p_0}-\frac n{p_1})$; or
				\item $s_0-s_1=\max(0,\frac n{p_0}-\frac n{p_1})$ and $q_0>q_1$.
			\end{itemize}
			\item\label{Item::ClassifyDom::Cpt} The embedding
			is compact if and only if $s_0-s_1>\max(0,\frac n{p_0}-\frac n{p_1})$.
			\item\label{Item::ClassifyDom::FSS} The embedding is non-compact but finitely strictly singular if and only if:
			\begin{itemize}
				\item $s_0-s_1=\frac n{p_0}-\frac n{p_1}>0$ and $q_0<q_1$; or
				\item $s_0-s_1=0$, $p_1<p_0=\infty$ and $q_0<q_1$.
			\end{itemize}
			\item\label{Item::ClassifyDom::SS} The embedding is not finitely strictly singular but strictly singular if and only if:
			\begin{itemize}
				\item $s_0-s_1=0$, $p_1=p_0=\infty$ and $q_0<q_1$; or
				\item $s_0-s_1=0$, $p_1\le p_0<\infty$ and $q_0<q_1$.
			\end{itemize}
			\item\label{Item::ClassifyDom::NSS} The embedding is not strictly singular if and only if $s_0-s_1=\max(0,\frac n{p_0}-\frac n{p_1})$ and $q_0=q_1$.
		\end{enumerate}
	\end{thm}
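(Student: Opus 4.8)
The plan is to transfer the problem to a sequence‑space model and reduce the entire statement to the analysis of one block‑diagonal operator. Using a universal extension operator for bounded Lipschitz domains together with the wavelet characterization of Besov spaces, for each admissible triple $(s,p,q)$ the space $\Bs^s_{pq}(\Omega)$ is a retract of the sequence space $b^s_{pq}$ of all $\lambda=(\lambda_{j,m})$ with $\|\lambda\|_{b^s_{pq}}:=\|(2^{j(s-n/p)}\|(\lambda_{j,m})_m\|_{\ell^p})_j\|_{\ell^q}<\infty$, where at level $j$ the index $m$ ranges over a set of cardinality $N_j\asymp 2^{jn}$, and the analysis/synthesis maps may be chosen independently of $(s,p,q)$; hence $\Bs^{s_0}_{p_0q_0}(\Omega)\hookrightarrow\Bs^{s_1}_{p_1q_1}(\Omega)$ is, up to pre‑ and post‑composition with bounded maps, the formal inclusion $\iota\colon b^{s_0}_{p_0q_0}\hookrightarrow b^{s_1}_{p_1q_1}$, $\lambda\mapsto\lambda$. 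Since compactness, strict singularity and finite strict singularity are closed‑operator‑ideal properties (as is boundedness, and their negations), all five assertions transfer between the Besov embedding and $\iota$, so it suffices to study $\iota$. Parts \ref{Item::ClassifyDom::Not} and \ref{Item::ClassifyDom::Cpt} are then the classical boundedness and compactness criteria: in the model, boundedness on the critical line $s_0-s_1=\max(0,\tfrac n{p_0}-\tfrac n{p_1})$ with $q_0\le q_1$ is a short Hölder estimate; the strict inequality $s_0-s_1>\max(0,\tfrac n{p_0}-\tfrac n{p_1})$ produces geometric decay of the conjugated block norms, so $\iota$ is a norm‑limit of finite‑rank operators and is compact; and on the critical line $\iota$ fixes, up to scalars, a normalized sequence supported in pairwise distinct blocks, hence is not compact. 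One also notes that the five listed parameter regimes exhaust all cases and the five behaviours are mutually exclusive, so it is enough to prove the forward implications.

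From now on restrict to the critical line. The key structural point is that, after conjugating $\iota$ by the evident isometric diagonal isomorphisms $b^{s_i}_{p_iq_i}\cong\ell^{q_i}((\ell^{p_i}_{N_j})_j)$, it becomes a block‑diagonal operator $\bigoplus_j T_j$ in which $T_j$ is a positive scalar times the natural inclusion $\ell^{p_0}_{N_j}\hookrightarrow\ell^{p_1}_{N_j}$, normalized so that $\|T_j\|\asymp 1$ uniformly in $j$. For part \ref{Item::ClassifyDom::NSS} ($q_0=q_1$): if $\tfrac n{p_0}-\tfrac n{p_1}>0$ then $s_0-\tfrac n{p_0}=s_1-\tfrac n{p_1}$ and $\iota$ is isometric on $\overline{\Span}\{e_{j,m_j}:j\ge 0\}$ (one unit coordinate in each level); if $s_0=s_1$ then $\iota$ is an isomorphism onto its image on $\overline{\Span}\{\mathbf{1}_j:j\ge 0\}$ (the all‑ones vector in each level, the cardinalities $N_j\asymp 2^{jn}$ cancelling the weights up to constants). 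Either way $\iota$ fails to be strictly singular. For the ``not finitely strictly singular'' half of part \ref{Item::ClassifyDom::SS} ($s_0=s_1$, $q_0<q_1$, and $p_1\le p_0<\infty$ or $p_1=p_0=\infty$): finite strict singularity is equivalent to the Bernstein numbers $b_k$ tending to $0$, and $b_k(\iota)\ge b_k(T_j)$ whenever $N_j\ge k$. If $p_0=p_1$ then $T_j$ is a block identity, so $b_k(T_j)=1$. If $p_1<p_0<\infty$, a Kashin/Figiel--Lindenstrauss--Milman–type estimate on $\ell^r$‑sections of $\ell^{p_0}_{N_j}$ furnishes subspaces $W_j\subseteq\R^{N_j}$ of dimension tending to $\infty$ with $j$ on which the $\ell^{p_1}$‑ and $\ell^{p_0}$‑norms are maximally separated, $\|x\|_{\ell^{p_1}}\asymp N_j^{1/p_1-1/p_0}\|x\|_{\ell^{p_0}}$, so $T_j$ is bounded below by an absolute constant on $W_j$. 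In each case $\inf_k b_k(\iota)>0$, so $\iota$ is not finitely strictly singular.

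It remains to prove strict singularity, resp. finite strict singularity, in the positive cases, all of which have $q_0<q_1$. For the strict singularity in part \ref{Item::ClassifyDom::SS} (where also $s_0=s_1=:s$), write $\iota=\iota''\circ\iota'$ with $\iota'\colon b^s_{p_0q_0}\to b^s_{p_0q_1}$ changing only the outer exponent and $\iota''\colon b^s_{p_0q_1}\to b^s_{p_1q_1}$ changing only the inner exponent; both are bounded, and since strict singularity is a two‑sided ideal property it suffices that $\iota'$ be strictly singular. Given an infinite‑dimensional closed subspace, a gliding‑hump argument (the levels being finite‑dimensional, with the standard quasi‑Banach modifications if some exponent is $<1$) produces a normalized basic sequence that is an arbitrarily small perturbation of a block‑disjointly supported one; on such a sequence the domain norm is an $\ell^{q_0}$‑norm and the target norm an $\ell^{q_1}$‑norm, and $\ell^{q_0}\hookrightarrow\ell^{q_1}$ is not an isomorphism because $q_0<q_1$, so $\iota'$ is invertible on no infinite‑dimensional subspace. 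For part \ref{Item::ClassifyDom::FSS}: either $s_0-s_1=\tfrac n{p_0}-\tfrac n{p_1}>0$, so $p_0<p_1$ and each $T_j$ is a scalar times $\ell^{p_0}_{N_j}\hookrightarrow\ell^{p_1}_{N_j}$ with $b_k(T_j)\le b_k(\ell^{p_0}\hookrightarrow\ell^{p_1})\to 0$ since $\ell^{p_0}\hookrightarrow\ell^{p_1}$ $(p_0<p_1)$ is finitely strictly singular; or $s_0=s_1$ with $p_1<p_0=\infty$, in which case $\sup_j b_k(T_j)\to 0$ as $k\to\infty$, because otherwise one would embed $\ell^\infty_k$ into $\ell^{p_1}$ with $k$ arbitrarily large and uniformly bounded distortion, impossible since $\ell^{p_1}$ has finite cotype. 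One then applies the lemma that an ``$\ell^{q_0}$‑to‑$\ell^{q_1}$ assembly'' $\bigoplus_j T_j$ with $\sup_j\|T_j\|<\infty$ and $\sup_j b_k(T_j)\to 0$ is finitely strictly singular whenever $q_0<q_1$; it yields $b_k(\iota)\to 0$, so $\iota$ is finitely strictly singular (and, when $s_0-s_1>0$, noncompact by part \ref{Item::ClassifyDom::Cpt}).

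The principal obstacle is the lemma invoked at the end: one must control, simultaneously and uniformly in the level, the two independent mechanisms that push the Bernstein numbers to zero — the ``spreading'' of a vector across levels, governed by the finitely strictly singular inclusion $\ell^{q_0}\hookrightarrow\ell^{q_1}$, and the ``concentration'' within a single level, governed by the $T_j$ — and this is delicate precisely because on the critical line the block norms do not decay, leaving no slack. Secondary technical points are making the transference of (finite) strict singularity through the retract precise in the quasi‑Banach range $p,q<1$, where the Banach‑space tools in the gliding‑hump step must be replaced by their quasi‑Banach analogues, and pinning down the exact forms of the $\ell^r$‑section estimates for $\ell^{p_0}_N$ used in parts \ref{Item::ClassifyDom::SS} and \ref{Item::ClassifyDom::FSS}.
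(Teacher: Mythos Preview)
Your overall architecture matches the paper's: reduce to sequence spaces via wavelets and an extension operator, then analyze the block-diagonal inclusion; the gliding-hump argument for strict singularity, the explicit subspaces for non-strict-singularity, and the appeal to the diagonal ``assembly'' lemma (the paper cites Lef\`evre--Rodr\'iguez-Piazza) are all as in the paper.

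There is, however, a genuine gap in the case $s_0=s_1$, $p_1<p_0=\infty$, $q_0<q_1$. You claim $\sup_j b_k(T_j)\to 0$ ``because otherwise one would embed $\ell^\infty_k$ into $\ell^{p_1}$ with $k$ arbitrarily large and uniformly bounded distortion, impossible since $\ell^{p_1}$ has finite cotype.'' This implication is wrong: $b_k(T_j)\ge c$ only produces a $k$-dimensional subspace $W\subset\R^{N_j}$ on which the $\ell^\infty$ and normalized $\ell^{p_1}$ norms are $c^{-1}$-equivalent, and $(W,\|\cdot\|_\infty)$ need not be $\ell^\infty_k$ --- it could be nearly Euclidean, in which case it embeds into $\ell^{p_1}$ with no cotype obstruction whatsoever. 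The paper closes this gap differently: it embeds each $T_j$ isometrically (via step functions) into the single operator $L^\infty[0,1]\hookrightarrow L^{p_1}[0,1]$ and then invokes the known bound $b_k(L^\infty[0,1]\hookrightarrow L^p[0,1])\lesssim k^{-1/\max(p,2)}$ (Rudin; Hern\'andez--Semenov), which gives the uniform decay needed for the assembly lemma.

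Two further differences are of method rather than correctness. For the ``not finitely strictly singular'' half of \ref{Item::ClassifyDom::SS} with $p_1<p_0<\infty$, the paper uses the Khintchine inequality --- the span of the Rademachers on $\{1,\dots,2^n\}$ is $n$-dimensional with uniformly comparable $L^{p_0}$ and $L^{p_1}$ norms --- which is far more elementary than Kashin/FLM. For the $p_0<p_1$ case of \ref{Item::ClassifyDom::FSS}, the paper bypasses the assembly lemma entirely, proving directly that $\ell^{q_0}(\ell^{p_0})\hookrightarrow\ell^{q_1}(\ell^{p_1})$ is finitely strictly singular via Plichko's lemma (any $n$-dimensional subspace of $c_0$ contains a vector attaining its sup at $\ge n$ coordinates) combined with an interpolation inequality, yielding the explicit bound $b_n\le n^{-\min(1/p_0,1/q_0)(1-\max(q_0/q_1,p_0/p_1))}$. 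Finally, your sentence ``all five assertions transfer between the Besov embedding and $\iota$'' overstates what a retract gives: ideal properties transfer from $\iota$ to the Besov embedding, but the negations require the separate construction of subspaces coming from wavelets supported inside $\Omega$ --- which you in fact do later, and which the paper formalizes via two distinct topological embeddings rather than a single retract.
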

	\begin{rmk*}
		By applying the standard partition of unity argument, the same results in Theorems~\ref{Thm::ClassifyDom} hold if we replace $\Omega$ by a compact smooth manifold. We refer to \cite[Chapter~7]{TriebelTheoryOfFunctionSpacesII} for the definition of spaces and leave the details to readers.
	\end{rmk*}

	For embeddings between Besov spaces on $\R^n$, we obtain:
	\begin{thm}\label{Thm::ClassifyRn}
		Let $p_0,p_1,q_0,q_1\in(0,\infty]$, $s_0,s_1\in\R$. Consider the embedding $\Bs_{p_0q_0}^{s_0}(\R^n)\hookrightarrow \Bs_{p_1q_1}^{s_1}(\R^n)$.
		\begin{enumerate}[(i)]
			\item\label{Item::ClassifyRn::Not} $\Bs_{p_0q_0}^{s_0}(\R^n)\not\subset \Bs_{p_1q_1}^{s_1}(\R^n)$, i.e. there is no embedding, if and only if one of the following occurs:
			\begin{itemize}
				\item $p_0>p_1$;
				\item $s_0-s_1<\frac n{p_0}-\frac n{p_1}$;
				\item $s_0-s_1=\frac n{p_0}-\frac n{p_1}\ge0$ and $q_0>q_1$.
			\end{itemize}
			\item\label{Item::ClassifyRn::FSS} The embedding is non-compact but finitely strictly singular if and only if one of the following:
			\begin{itemize}
				\item $s_0-s_1=\frac n{p_0}-\frac n{p_1}>0$ and $q_0<q_1$;
				\item $s_0-s_1>\frac n{p_0}-\frac n{p_1}>0$.
			\end{itemize}
			\item\label{Item::ClassifyRn::NSS} The embedding is not strictly singular if and only if one of the following occurs:
			\begin{itemize}
				\item $s_0-s_1>0$ and $p_0=p_1$;
				\item $s_0-s_1=0$, $p_0=p_1$ and $q_0\le q_1$;
				\item $s_0-s_1=\frac n{p_0}-\frac n{p_1}>0$ and $q_0=q_1$.
			\end{itemize}
		\end{enumerate}
	\end{thm}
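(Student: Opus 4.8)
\emph{Reduction to a model operator.} First I would invoke the wavelet (equivalently, $\varphi$-transform) characterisation of Besov spaces: for all admissible $p,q\in(0,\infty]$, $s\in\R$ there is an isomorphism $\Bs^{s}_{pq}(\R^n)\cong b^{s}_{pq}$, where $b^{s}_{pq}$ is the Besov sequence space of arrays $(\lambda_{j,k})_{j\ge0,\,k\in\Z^n}$ with $\big\|\big(2^{j(s+\frac n2-\frac np)}\|(\lambda_{j,k})_k\|_{\ell^{p}}\big)_j\big\|_{\ell^{q}}<\infty$, and under it the embedding $\Bs^{s_0}_{p_0q_0}(\R^n)\hookrightarrow\Bs^{s_1}_{p_1q_1}(\R^n)$ becomes the formal identity $b^{s_0}_{p_0q_0}\to b^{s_1}_{p_1q_1}$. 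Since this map is diagonal in the level $j$, a rescaling of each block carries it, up to isomorphism, to
\[
T=\bigoplus_{j\ge0}2^{-j\delta}\,\iota_{p_0,p_1}\;\colon\;\ell^{q_0}\big(\N;\ell^{p_0}(\Z^n)\big)\longrightarrow \ell^{q_1}\big(\N;\ell^{p_1}(\Z^n)\big),\qquad \iota_{p_0,p_1}\colon\ell^{p_0}(\Z^n)\hookrightarrow\ell^{p_1}(\Z^n),
\]
with $\delta:=(s_0-s_1)-n\big(\tfrac1{p_0}-\tfrac1{p_1}\big)$. As Bernstein numbers, (finite) strict singularity and compactness are invariant under composition with isomorphisms, it is enough to study $T$; and since each nonzero block $2^{-j\delta}\iota_{p_0,p_1}$ acts on an infinite index set, $T$ is never compact (as is anyway clear from translation-invariance on $\R^n$), so only the trichotomy FSS / strictly singular but not FSS / not strictly singular is at issue.

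\emph{Part (i) and the case split.} Part~\ref{Item::ClassifyRn::Not} is the classical boundedness criterion for Besov embeddings on $\R^n$; from the model it drops out by testing $T$ on arrays supported on a single level (which forces $p_0\le p_1$ and $\delta\ge0$, and forces $q_0\le q_1$ when $\delta=0$ via the one-coordinate-per-level test), boundedness in the complementary range following from Hölder in $j$ and from $\iota_{p_0,p_1}$. When an embedding does exist — i.e. $p_0\le p_1$ and either $\delta>0$ or ($\delta=0$ and $q_0\le q_1$) — the situation falls into exactly one of: (I) $\delta>0$, $p_0=p_1$; (II) $\delta>0$, $p_0<p_1$; (III) $\delta=0$, $p_0<p_1$, $q_0<q_1$; (IV) $\delta=0$, $p_0<p_1$, $q_0=q_1$; (V) $\delta=0$, $p_0=p_1$ (so $s_0=s_1$), $q_0\le q_1$. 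As FSS $\Rightarrow$ SS and these five cases are exhaustive, parts~\ref{Item::ClassifyRn::FSS} and \ref{Item::ClassifyRn::NSS} follow once one shows: in (I), (IV), (V) the operator $T$ is \emph{not} strictly singular, and in (II), (III) it is finitely strictly singular.

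\emph{The ``not strictly singular'' cases.} For each of (I), (IV), (V) I would exhibit an infinite-dimensional subspace on which $T$ is a constant multiple of an isometry. If $p_0=p_1$ (cases (I), (V)), take the subspace $V$ of arrays supported on the single level $j=0$: on $V$, $T$ is a fixed scalar times the identity $\ell^{p_0}(\Z^n)\to\ell^{p_0}(\Z^n)$, hence an isomorphism onto its infinite-dimensional range. In case (IV), take the subspace of arrays $(\lambda_{j,k})$ with $\lambda_{j,k}=0$ for $k\ne0$: since $\delta=0$ and $q_0=q_1$, both the source and target norms of such an array equal $\|(\lambda_{j,0})_j\|_{\ell^{q_0}}$, so $T$ restricts to an isometry. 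Thus $T$ is not strictly singular (hence not FSS) in these cases, which is the ``$\Rightarrow$'' of part~\ref{Item::ClassifyRn::NSS}; the converse is covered by the FSS cases.

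\emph{The finitely strictly singular cases, and the main obstacle.} The key input is the classical fact that for $0<p_0<p_1\le\infty$ the inclusion $\iota_{p_0,p_1}\colon\ell^{p_0}(\Z^n)\hookrightarrow\ell^{p_1}(\Z^n)$ is finitely strictly singular (it has norm $1$ and is non-compact, but $b_n(\iota_{p_0,p_1})\to0$; see \cite{LangMihula} and the references therein), together with the fact that the FSS operators form a \emph{closed} operator ideal. In case (II), because $\delta>0$ the tail norms $\big\|\bigoplus_{j>J}2^{-j\delta}\iota_{p_0,p_1}\big\|\le C\,2^{-J\delta}$ (with a Hölder factor in the level index when $q_0>q_1$) tend to $0$ as $J\to\infty$, while the truncations $\bigoplus_{0\le j\le J}2^{-j\delta}\iota_{p_0,p_1}$ are finite direct sums of FSS operators and hence FSS; by closedness of the ideal, $T$ is FSS. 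In case (III) the blocks no longer decay, so I would instead reduce to Theorem~\ref{Thm::ClassifyDom}\ref{Item::ClassifyDom::FSS}: since $\delta=0$, all blocks of $T$ carry the same weight and the two normalising exponents coincide, so any finite-dimensional, finitely supported subspace of $b^{s_1}_{p_1q_1}(\R^n)$ can be shifted to sufficiently high levels to sit \emph{isometrically for both the source and target norms} inside the bounded-domain sequence model $b^{s_1}_{p_1q_1}(\Omega)$ (whose $j$-th level carries $\asymp2^{jn}$ spatial indices); passing to the supremum over such subspaces gives $b_n(T)\le b_n\big(\Bs^{s_0}_{p_0q_0}(\Omega)\hookrightarrow\Bs^{s_1}_{p_1q_1}(\Omega)\big)\to0$. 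I expect the main difficulty to lie exactly here: turning this last reduction into a rigorous argument — or, equivalently, proving directly that the mixed-norm identity $\ell^{q_0}(\N;\ell^{p_0}(\Z^n))\to\ell^{q_1}(\N;\ell^{p_1}(\Z^n))$ is FSS whenever $p_0<p_1$ and $q_0<q_1$, via a dichotomy pitting concentration on few levels (where $\iota_{p_0,p_1}$ is FSS) against spreading over many levels (where the $\ell^{q_0}\hookrightarrow\ell^{q_1}$ gain applies) — and in running all the $s$-number and operator-ideal machinery uniformly over the quasi-Banach range $p,q\in(0,\infty]$ and the endpoint exponents.
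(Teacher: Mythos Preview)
Your reduction to the sequence model, your case split (I)--(V), and your treatment of the not-strictly-singular cases (I), (IV), (V) all match the paper's proof essentially verbatim. Your handling of case (II) via tail truncation plus closedness of the FSS ideal is valid and close in spirit to the paper's Corollary~\ref{Cor::SeqEmbed1}, which instead factors $\ell^{q_0}(2^{js}\cdot\ell^{p_0})\hookrightarrow\ell^{\min(q_1/2,1)}(2^{jt}\cdot\ell^{p_0})\hookrightarrow\ell^{q_1}(2^{jt}\cdot\ell^{p_1})$ and applies the $\delta=0$ result to the second arrow; both are fine.

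The genuine gap is case (III). Your proposed reduction to Theorem~\ref{Thm::ClassifyDom}\ref{Item::ClassifyDom::FSS} is circular in the paper's logic: the relevant bullet of Theorem~\ref{Thm::ClassifyDom}\ref{Item::ClassifyDom::FSS} (namely $s_0-s_1=\tfrac n{p_0}-\tfrac n{p_1}>0$, $q_0<q_1$) is itself deduced from the $\R^n$ statement via the extension operator (Proposition~\ref{Prop::SobEmbed}\ref{Item::SobEmbed::FSS}), not the other way around. Worse, the domain sequence model has blocks $2^{-jn/p_0}\ell^{p_0}\{1,\dots,2^{jn}\}\to 2^{-jn/p_1}\ell^{p_1}\{1,\dots,2^{jn}\}$ which, by Khintchine, are \emph{not} uniformly FSS, so your shift-to-high-levels idea would land you in a harder problem, not an easier one. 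What the paper actually does for (III) is a short direct estimate (Proposition~\ref{Prop::SeqEmbed1}): by Plichko's lemma every $n$-dimensional subspace of $c_0$ contains a vector whose maximum modulus is attained at $\ge n$ coordinates, which forces $\|x\|_{c_0}\le n^{-1/\max(p_0,q_0)}$ for a unit vector $x\in\ell^{q_0}(\ell^{p_0})$; combining with the H\"older interpolation $\|x\|_{\ell^{q_1}(\ell^{p_1})}\le\|x\|_{\ell^{q_0}(\ell^{p_0})}^{\theta}\|x\|_{c_0}^{1-\theta}$ yields an explicit power decay of $b_n$. This single lemma handles (II) and (III) simultaneously and works without modification in the quasi-Banach range, so the ideal-closure and ``dichotomy'' concerns you flag at the end simply do not arise. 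An acceptable alternative would be to invoke the Lef\`evre--Rodr\'iguez-Piazza diagonal theorem (Lemma~\ref{Lem::DiagThm}) directly on the $\R^n$ model, where the blocks are identical copies of $\iota_{p_0,p_1}$ and hence trivially uniformly FSS; but you do not mention this route either.
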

	We also include a result of embedding of homogeneous Besov spaces $\dot \Bs_{p_0q_0}^{s_0}(\R^n)\hookrightarrow \dot \Bs_{p_1q_1}^{s_1}(\R^n)$, see Theorem~\ref{Thm::ClassifyHomo}.
	
	In Theorems~\ref{Thm::ClassifyDom} and \ref{Thm::ClassifyRn}, the case where the map is unbounded and the case where it is compact are both well-known. The critical exponents cases, which form the core of our paper, fall in the  following two cases:
	\begin{itemize}
		\item The analog of Sobolev embedding: $s_0-s_1=\frac n{p_0}-\frac n{p_1}>0$ and $q_0>q_1$. We  show that both $\Bs_{p_0q_0}^{s_0}(\R^n)\hookrightarrow \Bs_{p_1q_1}^{s_1}(\R^n)$ and $\Bs_{p_0q_0}^{s_0}(\Omega)\hookrightarrow \Bs_{p_1q_1}^{s_1}(\Omega)$ are finitely strictly singular. See Proposition~\ref{Prop::SobEmbed}.
		\item The analog of $L^p$-$L^q$ embedding on domains: $s_0=s_1$, $p_0\ge p_1$ and $q_0<q_1$. In Proposition~\ref{Prop::DomEmbed} we show that for the map  $\Bs_{p_0q_0}^{s_0}(\Omega)\hookrightarrow \Bs_{p_1q_1}^{s_1}(\Omega)$:
		\begin{itemize}
			\item When $p_0=\infty> p_1$, it is finitely strictly singular.
			\item When $p_0=p_1=\infty$ or $p_0<\infty$, it is strictly singular but not finitely strictly singular. 
		\end{itemize}
	\end{itemize}
	
	
	We would like to point out that the above results on the ``quality'' of non-compactness for embeddings between Besov spaces closely align with the findings on strict singularity for embeddings between Lorentz sequence spaces \(l^{p,q}\) (see \cite{LangNekvinda}). The comparison is both interesting and noteworthy.

	It is also important to highlight that the Sobolev embedding $W^{k,p}(\Omega)$ into the space \(L^{\frac{np}{n-kp},p}(\Omega)\) is not only the optimal (i.e., smallest) target space within the scale of Lorentz spaces but also the optimal target space among all rearrangement-invariant spaces (such as Lebesgue spaces, Lorentz spaces, Orlicz spaces, and similar spaces). Specifically, this implies that if \(Z(\Omega)\) is a rearrangement-invariant space such that \(V_0^{k,p}(\Omega) \subset Z(\Omega)\), then \(L^{\frac{np}{n-kp},p}(\Omega) \subseteq Z(\Omega)\) (see \cite[Theorem A]{KerPick}). In contrast, the Sobolev embedding into \(L^\frac{np}{n-kp}(\Omega)\) is optimal within the narrower scale of Lebesgue spaces, and the ``quality'' of non-compactness differs significantly (i.e., non-strictly singular versus finitely strictly singular). This observation suggests that non-strictly singular embeddings may be associated with the ``most optimal'' embeddings. This insight could help identify cases where we might expect to find the ``most optimal'' embeddings between Besov spaces, as seen in the case \ref{Item::ClassifyDom::NSS} in Theorem \ref{Thm::ClassifyDom} and case \ref{Item::ClassifyRn::NSS} in Theorem \ref{Thm::ClassifyRn}, comparable with cases in which there is a chance for 
	more optimal target spaces, i.e. when embedding is finitely strictly singular or strictly singular.

	It is also worth noting that the techniques used in \cite{LangMihula}, \cite{LangMusil} and \cite{BouGro} for Sobolev embeddings are fundamentally different from those employed in this paper. In particular, the results for Sobolev embeddings \(W^{k,p}\to L^\frac{np}{n-kp}\) require quite sophisticated methods. This contrasts with the techniques used for Besov embeddings and highlights the advantages of working with Besov spaces compared to Sobolev spaces and Lorentz spaces. 
	This is demonstrated in our forthcoming paper \cite{ChuahLangYaoSobolev}, in which, by using the results obtained for Besov spaces and their relation with Sobolev spaces, we improved and extend findings from \cite{BouGro} and \cite{LangMihula}.

	Additionally, it is noteworthy that, to the best of our knowledge, there is no natural Sobolev embedding that is non-compact, strictly singular, yet not finitely strictly singular. This specific scenario, however, can be naturally observed within the framework of Besov spaces, as demonstrated in Theorem \ref{Thm::ClassifyDom} and Theorem \ref{Thm::ClassifyRn}, underscoring the structural richness of Besov spaces.

	The paper is structured as follows. In the next section, we recall essential definitions and notations used throughout the paper, along with results concerning the wavelet decomposition of Besov spaces and their implications. Section~\ref{Section::SeqSpace} focuses on the study of embeddings between sequence spaces. In Section~\ref{Section::PfThm}, we present the proofs of the main theorems, Theorems~\ref{Thm::ClassifyDom} and~\ref{Thm::ClassifyRn}. Section~\ref{Section::HomoEmbed} extends Theorem~\ref{Thm::ClassifyRn} to the setting of homogeneous Besov spaces, resulting in Theorem~\ref{Thm::ClassifyHomo}. 
	Finally, Section~\ref{Section::Further} concludes the paper with final remarks and observations.

	\section{Preliminaries}
	In this section, we recall some notations, definitions, and auxiliary results.
	
	Throughout the paper, by linear spaces, we mean the real linear spaces. We focus on quasi-Banach spaces as most of the results we used on Banach spaces remain the same for quasi-Banach spaces.
	
	By $\xrightarrow{\simeq}$ we denote bounded linear isomorpism between (quasi-)Banach spaces and $\Subset$ will stand for compact inclusion of bounded sets.
	\begin{defn}
		Let $T:X\to Y$ be a bounded linear map between two (real) quasi-normed spaces $X$ and $Y$.
		
		We say $T$ is \textit{strictly singular} if there is no infinite-dimensional subspace $Z$ of $X$ such that the restriction $T|_Z$ is a topological isomorphism of $Z$ onto $T(Z)$. Equivalently, we can say that
		\[
		\inf_{x\in Z;\|x\|_X=1}\|T(x)\|_Y= 0,\quad\text{for every subspaces }Z\subseteq X\text{ such that }\dim Z=\infty.
		\]
		
		We say $T$ is \textit{finitely strictly singular} if for any $\varepsilon > 0$, there exists $N(\varepsilon) \in \Z_+$ such that if $E$ is a subspace of $X$ with $\dim E \ge N(\varepsilon)$, then there exists $x \in E$ with $\|x\|_X=1$, such that $\|Tx\|_Y \le \varepsilon$.
	\end{defn}
	
	The finitely strictly singularity can also be expressed in terms of the Bernstein numbers of $T$. Observe that $T$ is finitely strictly singular if and only if $\lim_{n\to\infty}b_n(T)=0$.
	
	\begin{defn}
		For $n\ge1$, the $n$-th \textit{Bernstein number} $b_n(T)$ of a bounded linear map $T:X\to Y$ is
		\[
		b_{n} (T) := \sup \big\{ \inf_{x \in Z_n; \| x \|_{X} = 1} \left\| T(x) \right\|_{Y} : Z_n\subseteq X\text{ is a subspace such that }\dim Z_n\ge n\big\}.
		\]
	\end{defn}

	Obviously, we have:
	\[ T \text{ is compact} \implies T \text{ is finitely strictly singular} \implies T \text{ is strictly singular.} \]
	
	To classify embedding between Besov spaces, we reduce the discussion to sequence spaces.
	
	\begin{conv}
		Let $q\in(0,\infty]$ and $S$ be a set, we denote by $\ell^q(S)$ the spaces $q$-summable sequences $x=(x_j)_{j\in S}$ with quasinorm $\|x\|_{\ell^q(S)}:=(\sum_{j\in S}|x_j|^q)^{1/q}$ when $q<\infty$ and $\|x\|_{\ell^\infty(S)}=\sup_{j\in S}|x_j|$. 
		When $S=\N_0=\{0,1,2,\dots\}$ we use abbreviation $\ell^q=\ell^q(\N_0)$.
	\end{conv}
	\begin{defn}\label{Defn::VectEllSpace}
		Let $(X_j)_{j=0}^\infty$ be a family of quasi-normed spaces and let $q\in(0,\infty]$. We denote by $\ell^q(X_j)_{j=0}^\infty=\ell^q(\N_0;(X_j)_{j=0}^\infty)$ the space of vector-valued sequence $x=(x_0,x_1,x_2,\dots)$ where $x_j\in X_j$ for each $j\ge0$, such that
		\begin{equation*}
			\|x\|_{\ell^q(X_j)_{j=0}^\infty}:=\Big(\sum_{j=0}^\infty \|x_j\|_{X_j}^q\Big)^{1/q}.
		\end{equation*}
		Here when $q=\infty$ we replace the sum by supremum over $j\ge0$.
		
		We also write $x=\sum_{j=0}^\infty x_j\otimes e_j$ where $e_j=(0,\dots,0,\underset j1,0,\dots)$ is the standard basis in $\ell^q$.
		
		When $(X_j)_{j=0}^\infty$ are all identical for all $j\ge0$, say $X_j\equiv Y$. We use the notation $\ell^q(Y)=\ell^q(X_j)_{j=0}^\infty$.
	\end{defn}
	\begin{rmk}\label{Rmk::VectEllSpace}
		Using the notation in the definition, for the double sequence space $\ell^q(\ell^p)$, by the element $e_j\otimes e_k$ we mean $e_j\in\ell^p$ and $e_k\in\ell^q$. More precisely $e_j\otimes e_k=(0,\dots,0,\underset k{e_j},0,\dots)$.
	\end{rmk}
	\begin{rmk}
		Notice that $\ell^q(X_j)_{j=0}^\infty$ may not be quasi-normed since $(X_j)_{j=0}^\infty$ may not have uniform triangle inequality. That is, one may not find a $C>0$ such that $\|x_j'+x_j''\|_{X_j}\le C(\|x_j'\|_{X_j}+\|x''_j\|_{X_j})$ for all $j\ge0$ and $x_j',x_j''\in X_j$. For example, we can take $X_j=\ell^{1/j}$ for $j\ge0$. However,
		in our case $(X_j)_{j=0}^\infty$ are always subspaces of $\ell^p$ for some $p>0$, where we have $\|x_j'+x_j''\|_{X_j}\le 2^{\max(1/p-1,0)}(\|x_j'\|_{X_j}+\|x''_j\|_{X_j})$ for all $j$ and $x_j',x_j''$.
	\end{rmk}
	
	\begin{conv}\label{Conv::LambdaX}
		Let $(X,\|\cdot\|_X)$ be a quasi-normed space and let $\lambda>0$. We denote by $\lambda X$ the quasi-normed space $(X,\|\cdot\|_{\lambda X})$ where $\|x\|_{\lambda X}:=\lambda\|x\|_X$ for every $x\in X$.
	\end{conv}
	\begin{rmk}\label{Rmk::LambdaX}
		For every finite set $S$ and $0<r\le\infty$, the space $(\# S)^{-1/r}\cdot\ell^r(S)$ equals to $L^r(S,\mu_S)$ where $\mu_S\{a\}=(\# S)^{-1}$ for every $a\in S$. 
		
		Since $(S,\mu_S)$ is a probability space, for every $0<r_1\le r_0\le\infty$, the embedding $$L^{r_0}(S,\mu_S)=(\# S)^{-1/r_0}\cdot\ell^{r_0}(S)\hookrightarrow L^{r_1}(S,\mu_S)=(\# S)^{-1/r_1}\cdot\ell^{r_1}(S)$$ has operator norm 1.
	\end{rmk}

	\begin{defn}\label{Defn::BesovRn}
		Let $s\in \R$ and $p,q \in (0,\infty]$. We denote by $\Bs_{pq}^s(\R^n)$ the Besov space which contains all tempered distributions $f\in \Ss'(\R^n)$ for which
		\[
		\|f\|_{\Bs_{pq}^s(\R^n)} = \|f\|_{\Bs_{pq}^s(\R^n;\varphi)} := \Big( \sum_{j=0}^{\infty} \| 2^{js}\varphi_j\ast f\|_{L^p(\R^n)}^q \Big)^{\frac{1}{q}} <\infty.
		\]
		Here $(\varphi_j)_{j=0}^\infty\subset\Ss(\R^n)$ is a fixed family of Schwartz functions whose Fourier transform $\hat\varphi_j(\xi)=\int \varphi_j(x)e^{-2\pi ix\xi}dx$ satisfy
		\begin{itemize}
			\item $\supp\hat\varphi_0\subset B(0,2)$ and $\hat\varphi_0\equiv1$ in a neighborhood of $\overline {B(0,1)}$.
			\item $\hat\varphi_j(\xi)=\hat\varphi_0(2^{-j}\xi)-\hat\varphi_0(2^{1-j}\xi)$ for $j\ge1$. 
		\end{itemize}
	\end{defn}
	
	Different choices of $(\varphi_j)_j$ result in different but equivalent norms (see e.g. \cite[Proposition~2.3.2]{TriebelTheoryOfFunctionSpacesI}), thus, the space is independent of $(\varphi_j)_j$ even though its norm is dependent on the choice of functions. In this paper, we do not use the exact norm definition from above but the discretized version via wavelet, see Proposition~\ref{Prop::Wavelet}.
	
	\begin{defn}[{See also \cite[Definition~1.95]{TriebelTheoryOfFunctionSpacesIII}}]\label{Defn::BesovDom}
		For an open subset $\Omega\subseteq\R^n$, we define $\Bs_{pq}^s(\Omega)$ by the restriction of $\Bs_{pq}^s(\R^n)$ functions to $\Omega$:
		\begin{equation*}
			\Bs_{pq}^s(\Omega):= \{\tilde f|_\Omega:\tilde f\in \Bs_{pq}^s(\R^n) \},\quad\|f\|_{\Bs_{pq}^s(\Omega)}:=\inf_{\tilde f|_\Omega=f}\|\tilde f\|_{\Bs_{pq}^s(\R^n)}.
		\end{equation*}
		
		We define the closed subspace $\widetilde \Bs_{pq}^s(\overline\Omega)\subseteq \Bs_{pq}^s(\R^n)$ by
		\begin{equation*}
			\widetilde \Bs_{pq}^s(\overline\Omega):= \{\tilde f\in \Bs_{pq}^s(\R^n):\supp f\subseteq\overline\Omega \}. 
		\end{equation*}
	\end{defn}
	
	\begin{rmk}
		From the above definition, we have the following quotient characterization: for $\Omega\subset\R^n$,
		\begin{equation}\label{Eqn::QuoChar}
			\Bs_{pq}^s(\Omega)=\Bs_{pq}^s(\R^n)/\{\tilde f\in \Bs_{pq}^s(\R^n):\tilde f|_\Omega=0\}.
		\end{equation}
		
		When $\Omega=\R^n$ we have by definition the coincidence of notations
		\begin{equation*}
			\Bs_{pq}^s(\R^n)=\widetilde \Bs_{pq}^s(\overline{\R^n}).
		\end{equation*}
	\end{rmk}

	\begin{lem}\label{Lem::TrivialInclusion}
		Let $\Omega_0,\Omega_1\subseteq\R^n$ be two open subsets such that $\Omega_0$ is precompact in $\Omega_1$. We have the natural topological embedding $\widetilde \Bs_{pq}^s(\overline\Omega_0)\hookrightarrow \Bs_{pq}^s(\Omega_1)$.
	\end{lem}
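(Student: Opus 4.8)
The plan is to produce, for each $f\in\widetilde\Bs_{pq}^s(\overline\Omega_0)$, a natural element of $\Bs_{pq}^s(\Omega_1)$ and to check that the resulting map is well-defined, linear, and bounded. Since $\widetilde\Bs_{pq}^s(\overline\Omega_0)$ is by definition a subspace of $\Bs_{pq}^s(\R^n)$ consisting of distributions supported in $\overline{\Omega_0}$, the obvious candidate for the map is $f\mapsto f|_{\Omega_1}$, i.e. the composition of the inclusion $\widetilde\Bs_{pq}^s(\overline\Omega_0)\hookrightarrow\Bs_{pq}^s(\R^n)$ with the restriction operator $\Bs_{pq}^s(\R^n)\to\Bs_{pq}^s(\Omega_1)$ from Definition \ref{Defn::BesovDom}. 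Boundedness is then immediate: $\|f|_{\Omega_1}\|_{\Bs_{pq}^s(\Omega_1)}=\inf_{\tilde g|_{\Omega_1}=f|_{\Omega_1}}\|\tilde g\|_{\Bs_{pq}^s(\R^n)}\le\|f\|_{\Bs_{pq}^s(\R^n)}$, so the map has operator norm at most $1$.

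The real content of the statement is that this map is \emph{injective}, i.e. a genuine topological embedding onto its image (which is what ``natural topological embedding'' should mean here). So the key step is: if $f\in\Bs_{pq}^s(\R^n)$ has $\supp f\subseteq\overline{\Omega_0}$ and $f|_{\Omega_1}=0$, then $f=0$. This is where the hypothesis that $\Omega_0$ is \emph{precompact in} $\Omega_1$ (so that $\overline{\Omega_0}$ is a compact subset of the open set $\Omega_1$) is used: $\supp f\subseteq\overline{\Omega_0}\subseteq\Omega_1$, and $f$ vanishes on the open set $\Omega_1\supseteq\supp f$, hence $f$ vanishes on all of $\R^n$. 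Combined with the lower bound — for $f\in\widetilde\Bs_{pq}^s(\overline\Omega_0)$ one has, conversely, that any extension $\tilde g$ of $f|_{\Omega_1}$ differs from $f$ by something vanishing on $\Omega_1$, which we just argued must be $0$, so actually $\|f|_{\Omega_1}\|_{\Bs_{pq}^s(\Omega_1)}=\|f\|_{\Bs_{pq}^s(\R^n)}$ — this shows the map is even isometric onto its image, in particular a topological embedding.

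I expect essentially no obstacle here; the lemma is a soft, definitional fact, and the only thing to be careful about is spelling out exactly which norm comparison is claimed. The cleanest write-up: (1) define the map as restriction, note $\|f|_{\Omega_1}\|_{\Bs_{pq}^s(\Omega_1)}\le\|f\|_{\Bs_{pq}^s(\R^n)}$ from the definition of the quotient norm; (2) for the reverse inequality and injectivity, observe that if $\tilde g\in\Bs_{pq}^s(\R^n)$ satisfies $\tilde g|_{\Omega_1}=f|_{\Omega_1}$, then $h:=f-\tilde g$ has $h|_{\Omega_1}=0$, while $\supp f\subseteq\overline{\Omega_0}$ gives no immediate control on $\tilde g$ — so instead argue directly that $f|_{\Omega_1}=0\Rightarrow f=0$ using $\supp f\subseteq\overline{\Omega_0}\Subset\Omega_1$, which yields injectivity; and then boundedness of the inverse on the image follows since the image carries the subspace topology from $\Bs_{pq}^s(\Omega_1)$ and the composition with the (bounded) quotient-free restriction is the identity. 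One may alternatively invoke the quotient characterization \eqref{Eqn::QuoChar} directly: $\Bs_{pq}^s(\Omega_1)=\Bs_{pq}^s(\R^n)/\mathcal{N}$ with $\mathcal{N}=\{\tilde f:\tilde f|_{\Omega_1}=0\}$, and $\widetilde\Bs_{pq}^s(\overline\Omega_0)\cap\mathcal{N}=\{0\}$ by the support argument, so the quotient map restricted to $\widetilde\Bs_{pq}^s(\overline\Omega_0)$ is injective with norm $\le 1$, hence the desired embedding.
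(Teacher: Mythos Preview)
Your argument has a genuine gap at the reverse inequality. You correctly show that $f\mapsto f|_{\Omega_1}$ is bounded with norm $\le 1$ and injective, but the claim that the map is a \emph{topological} embedding requires $\|f\|_{\Bs_{pq}^s(\R^n)}\le C\|f|_{\Omega_1}\|_{\Bs_{pq}^s(\Omega_1)}$ for $f\in\widetilde\Bs_{pq}^s(\overline\Omega_0)$, and your justification for this is flawed. When you write that ``any extension $\tilde g$ of $f|_{\Omega_1}$ differs from $f$ by something vanishing on $\Omega_1$, which we just argued must be $0$,'' you are misapplying your own support argument: that argument needed both $h|_{\Omega_1}=0$ \emph{and} $\supp h\subseteq\overline{\Omega_0}$, whereas $h=\tilde g-f$ is supported in $\R^n\setminus\Omega_1$, not in $\overline{\Omega_0}$ (you even flag this yourself a sentence earlier). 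In fact the isometry is false in general: Besov norms are nonlocal, and for negative $s$ one can take, e.g., two separated point masses in $H^{-1}(\R)=\Bs_{22}^{-1}(\R)$, whose $H^{-1}$ inner product is nonzero, to produce $h$ supported outside $\Omega_1$ with $\|f+h\|<\|f\|$. Your fallback claims do not help either: ``the image carries the subspace topology'' is exactly what must be proved, and $\widetilde\Bs_{pq}^s(\overline\Omega_0)\cap\mathcal N=\{0\}$ gives only injectivity of the quotient map restricted to the subspace, not a bounded inverse.

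The paper supplies the missing bound via a cutoff: pick $\chi\in C_c^\infty(\Omega_1)$ with $\chi\equiv 1$ on $\overline{\Omega_0}$. Multiplication by $\chi$ is bounded on $\Bs_{pq}^s(\R^n)$ (pointwise multiplier theorem) and its kernel contains $\{\tilde f:\tilde f|_{\Omega_1}=0\}$, so by \eqref{Eqn::QuoChar} it descends to a bounded map $\Bs_{pq}^s(\Omega_1)\to\Bs_{pq}^s(\R^n)$. Since $\chi f=f$ whenever $\supp f\subseteq\overline{\Omega_0}$, this gives $\|f\|_{\Bs_{pq}^s(\R^n)}=\|\chi f\|_{\Bs_{pq}^s(\R^n)}\le C\|f\|_{\Bs_{pq}^s(\Omega_1)}$, which is precisely the inequality you are missing.
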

	
	\begin{proof}
		An $f\in \widetilde \Bs_{pq}^s(\overline\Omega_0)$ is an element in $\Bs_{pq}^s(\R^n)$, thus by definition and \eqref{Eqn::QuoChar} $\|f\|_{\Bs_{pq}^s(\Omega_1)}\le\|f\|_{\Bs_{pq}^s(\R^n)}$.
		
		On the other hand, take a $\chi\in C_c^\infty(\Omega_1)$ such that $\chi|_{\Omega_0}\equiv1$. Recall from e.g. \cite[Theorem~2.8.2]{TriebelTheoryOfFunctionSpacesI} that $[\tilde f\mapsto \chi\tilde f]:\Bs_{pq}^s(\R^n)\to \Bs_{pq}^s(\R^n)$ is a bounded map whose kernel contains $\{\tilde f:\tilde f|_{\Omega_1}=0\}$. By the quotient characterization \eqref{Eqn::QuoChar} it descends to a bounded linear map $[f\mapsto \chi f]:\Bs_{pq}^s(\Omega_1)\to \Bs_{pq}^s(\R^n)$, i.e. there is a $C>0$ such that $\|\chi f\|_{\Bs_{pq}^s(\R^n)}\le C\|f\|_{\Bs_{pq}^s(\Omega_1)}$ for all $f\in \Bs_{pq}^s(\Omega_1)$.
		
		Since $f=\chi f$ for every $f\in \widetilde \Bs_{pq}^s(\overline\Omega_0)$, this is to say $\|f\|_{\Bs_{pq}^s(\R^n)}=\|\chi f\|_{\Bs_{pq}^s(\R^n)}\le C\|f\|_{\Bs_{pq}^s(\Omega_1)}$ for all $f\in \widetilde \Bs_{pq}^s(\overline\Omega_0)$, completing the proof.
	\end{proof}

	The mapping between Besov spaces can be reduced to the corresponding sequence spaces via wavelet decomposition theorems. The following theorem is due to Ciesielski and Figiel (see \cite{CF1,CF2}). 
	
	In the following we use indices sets $Q_j$ and $G_j$ for $j\ge0$, given by
	\begin{equation}
		Q_j:=2^{-j}\Z^n\text{ for }j\ge0;\qquad G_0:=\{0,1\}^n,\quad G_j:=\{0,1\}^n\backslash\{0\}^n\text{ for }j\ge1.
	\end{equation}
	\begin{prop}[{\cite[Theorem~1.20]{TriebelSpacesOnDomains}}]\label{Prop::Wavelet}
		Let $\eps>0$. There is a set $\{\psi_{jm}^g:j\ge0,m\in Q_j,g\in G_j\}\subset C_c^\infty(\R^n)$ (depending on $\eps$) which forms a (real) orthonormal basis to $L^2(\R^n)$ satisfying the following.
		\begin{enumerate}[(i)]
			\item\label{Item::Wavelet::Scal} $\psi_{jm}^g(x)=2^{jn/2}\psi_{00}^g(2^j(x-m))$ for all $j\ge0$, $m\in Q_j$ and $g\in G_j$.
			\item\label{Item::Wavelet::Supp} There is a $L\ge1$ such that $\supp\psi_{00}^g\subset B(0,2^L)$ for all $g\in G_0$. As a result, $\supp\psi_{jm}^g\subset B(m,2^{L-j})$.
			\item\label{Item::Wavelet::Sum} For every $p,q\in[\eps,\infty],s\in[-\eps^{-1},\eps^{-1}]$ and $f\in \Bs_{pq}^s(\R^n)$, we have expansion (which is orthonormal in $L^2$) $$f=\sum_{j=0}^\infty\sum_{m\in Q_j,g\in G_j}(f,\psi_{jm}^g)\cdot\psi_{jm}^g,$$ where the series converges in the sense of tempered distributions.    
			\item\label{Item::Wavelet::Isom} For every $p,q\in[\eps,\infty],s\in[-\eps^{-1},\eps^{-1}]$, 
			\begin{equation}\label{Eqn::Wavelet::bpqsSpace}
				f\in \Bs_{pq}^s(\R^n)\text{ if and only if }\bigg(\sum_{j=0}^\infty 2^{j(s-\frac np+\frac n2)q}\Big|\sum_{m\in Q_j,g\in G_j}|(f,\psi_{jm}^g)|^p\Big|^{q/p}\bigg)^{1/q}<\infty.
			\end{equation}
			In other words we obtain the following isomorphisms
			\begin{equation}\label{Eqn::Wavelet::Lambda}
				\begin{array}{rcl}
					\Lambda=\Lambda_\eps:\Bs_{pq}^s(\R^n)&\xrightarrow{\simeq}&\ell^q\big(2^{j(s-\frac np+\frac n2)}\cdot\ell^p(Q_j\times G_j)\big)_{j=0}^\infty;\\
					f&\longmapsto&\{(f,\psi_{jm}^g)_{L^2}:m\in Q_j,\ g\in G_j\}_{j=0}^\infty;\\
					\sum_{j=0}^\infty\sum_{m,g}a_{jm}^g\psi_{jm}^g&\longmapsfrom&\{a_{jm}^g:m\in Q_j,\ g\in G_j\}_{j=0}^\infty.
				\end{array}
			\end{equation}
			
			In particular, by fixing identifications $Q_j\times G_j\simeq\Z$ for each $j\ge0$ we obtain the isomorphisms $\Lambda:\Bs_{pq}^s(\R^n)\xrightarrow{\simeq}\ell^q(2^{j(s-\frac np+\frac n2)}\cdot\ell^p(\Z))_{j=0}^\infty$. 
		\end{enumerate}
	\end{prop}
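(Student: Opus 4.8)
The plan is to only sketch the argument, since this is Theorem~1.20 of \cite{TriebelSpacesOnDomains} (cf.\ \cite{CF1,CF2}); what follows is the shape the proof would take. First I would construct the basis. Given $\eps>0$, fix an integer $N>\eps^{-1}+n+1$ and take a one-dimensional Daubechies pair $\phi,\psi\in C_c^N(\R)$, where $\phi$ is a scaling function generating a multiresolution analysis and $\psi$ the associated wavelet; such a pair exists with $\phi,\psi$ supported in an interval of length $O(N)$, with $\psi$ having at least $N$ vanishing moments, and with $\{\phi(\cdot-k)\}_k\cup\{2^{j/2}\psi(2^j\cdot-k)\}_{j\ge0,k}$ an orthonormal basis of $L^2(\R)$. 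Tensorizing, for $g\in\{0,1\}^n$ set $\psi_{00}^g:=\bigotimes_{i=1}^n\psi^{g_i}$ with $\psi^0:=\phi$, $\psi^1:=\psi$, and then $\psi_{jm}^g(x):=2^{jn/2}\psi_{00}^g(2^j(x-m))$. Property~\eqref{Item::Wavelet::Scal} is then the definition; \eqref{Item::Wavelet::Supp} holds with $2^L$ the diameter of $\supp\psi_{00}^g$ (uniform in $g$); and the standard tensor-product multiresolution argument shows $\{\psi_{jm}^g:j\ge0,m\in Q_j,g\in G_j\}$ is an orthonormal basis of $L^2(\R^n)$, the index restriction $G_j=\{0,1\}^n\setminus\{0\}$ for $j\ge1$ encoding that at each fine scale at least one tensor factor is $\psi$ rather than $\phi$.

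The core is \eqref{Item::Wavelet::Sum}--\eqref{Item::Wavelet::Isom}, and I would reduce it to the two estimates
\begin{equation*}
\|f\|_b\lesssim\|f\|_{\Bs_{pq}^s(\R^n)},\qquad \Big\|\sum_{j,m,g}a_{jm}^g\psi_{jm}^g\Big\|_{\Bs_{pq}^s(\R^n)}\lesssim\big\|(a_{jm}^g)_{j,m,g}\big\|_b,
\end{equation*}
where $\|\cdot\|_b$ denotes the right side of \eqref{Eqn::Wavelet::bpqsSpace} and the implied constants are to depend only on $n$ and $\eps$. For the first, expand $f=\sum_k\varphi_k\ast f$ (Littlewood--Paley); since $\psi_{jm}^g$ is localized in a ball of radius $\sim 2^{-j}$ about $m$ and has $N$ vanishing moments, an almost-orthogonality estimate bounds $|(f,\psi_{jm}^g)|$ by $\sum_k 2^{-|j-k|M}$ times a local $L^p$-average of $\varphi_k\ast f$ near $m$, with $M$ of size $\sim N-n$; because $N$ exceeds $|s|$, $|s-\tfrac np+\tfrac n2|$ and $n$, summing the geometric series in $|j-k|$, then over the disjoint cubes $m\in Q_j$ at each scale, then in $j$ in $\ell^q$, yields $\|f\|_b\lesssim\|f\|_{\Bs_{pq}^s}$; crucially this uses only the $p$-triangle inequality and Hölder, not a vector-valued maximal inequality --- the feature that makes the Besov scale lighter than the Triebel--Lizorkin scale here. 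The second estimate is the dual computation: for each $j$ the block $f_j:=\sum_{m,g}a_{jm}^g\psi_{jm}^g$ has frequency support essentially in $\{|\xi|\sim 2^j\}$, so $\varphi_k\ast f_j=0$ for $|k-j|$ larger than the fixed constant $L+1$, while the bounded overlap of the supports $\{\supp\psi_{jm}^g\}_m$ gives $\|f_j\|_{L^p}\lesssim 2^{jn(\frac12-\frac1p)}\big(\sum_{m,g}|a_{jm}^g|^p\big)^{1/p}$; multiplying by $2^{js}$, summing in $\ell^q$ and noting $2^{js}\cdot 2^{jn(\frac12-\frac1p)}=2^{j(s-\frac np+\frac n2)}$ recovers exactly the weight in \eqref{Eqn::Wavelet::Lambda}, and the same estimate shows the series converges in $\Ss'(\R^n)$, giving \eqref{Item::Wavelet::Sum}.

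Finally, the analysis map $\Lambda$ of \eqref{Eqn::Wavelet::Lambda} is bounded by the first estimate and its putative inverse (synthesis) by the second; since $\{\psi_{jm}^g\}$ is an $L^2$-orthonormal basis and finite linear combinations of the $\psi_{jm}^g$ are dense in $\Bs_{pq}^s(\R^n)$, the two maps are mutually inverse, which is \eqref{Item::Wavelet::Isom}. The step I expect to be the main obstacle is making the two displayed estimates hold with constants uniform over the whole parameter box $p,q\in[\eps,\infty]$, $s\in[-\eps^{-1},\eps^{-1}]$ using a single fixed basis; this is precisely what forces the generator to be chosen with regularity and vanishing-moment order both comfortably larger than $\eps^{-1}+n$, after which the almost-diagonal bounds become uniform and everything reduces to summing geometric series. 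The endpoint cases $p=\infty$ or $q=\infty$ require the usual reinterpretation of the $\ell^p$/$\ell^q$ sums as suprema but introduce no new difficulty.
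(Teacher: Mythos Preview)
The paper does not prove this proposition; it is quoted directly from \cite[Theorem~1.20]{TriebelSpacesOnDomains} (with attribution to \cite{CF1,CF2}) and used as a black box. So there is no ``paper's own proof'' to compare against, and your sketch is effectively a standalone outline of the standard Daubechies-wavelet argument. That outline is broadly correct in spirit, but two points in it do not go through as written.

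First, in the synthesis estimate you say that $f_j=\sum_{m,g}a_{jm}^g\psi_{jm}^g$ ``has frequency support essentially in $\{|\xi|\sim 2^j\}$, so $\varphi_k\ast f_j=0$ for $|k-j|$ larger than the fixed constant $L+1$.'' This is false for compactly supported wavelets: a nonzero compactly supported function has entire Fourier transform, so $\widehat\psi_{jm}^g$ cannot vanish on any annulus and $\varphi_k\ast f_j$ is never exactly zero. The correct argument mirrors your analysis step: use the $N$ vanishing moments of $\psi$ (for $j\ge1$) together with the $C^N$ regularity of $\varphi_k$ to get $|\varphi_k\ast\psi_{jm}^g(x)|\lesssim 2^{-|j-k|M}(1+2^{\min(j,k)}|x-m|)^{-M}$ with $M\sim N$, and then sum the resulting geometric series in $|j-k|$. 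The upshot is the same inequality you claim, but the mechanism is almost-diagonal decay, not finite band-limitation.

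Second, your closing step invokes density of finite linear combinations of the $\psi_{jm}^g$ in $\Bs_{pq}^s(\R^n)$. This fails when $p=\infty$ or $q=\infty$, since those spaces are nonseparable; you cannot dismiss the endpoints as ``the usual reinterpretation.'' The standard fix is to argue directly that analysis-followed-by-synthesis reproduces $f$ as a tempered distribution (this is what the Calder\'on reproducing formula, or equivalently the $L^2$-completeness of the basis combined with the uniform two-sided estimates, gives), rather than appeal to density. A minor related point: Daubechies wavelets are $C_c^N$, not $C_c^\infty$; the $C_c^\infty$ in the statement is a slight overstatement inherited from the source, and your choice of finite $N>\eps^{-1}+n+1$ is what is actually used.
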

	\begin{rmk}
		The right hand side of \eqref{Eqn::Wavelet::bpqsSpace} are actually the so-called Besov sequence norm. See \cite[Definition~1.18]{TriebelSpacesOnDomains} for details.
	\end{rmk}
	
	For more information about construction and structure of $\psi_{jm}^g$ and $G_j$ we directions readers to \cite[Section~1.2.1]{TriebelSpacesOnDomains}.
	
	
	In the next we sometimes restrict the $\Lambda$ or $\Lambda^{-1}$ from \eqref{Eqn::Wavelet::Lambda} to certain subspaces. 
	Here we use the existence of the extension operator on Lipschitz domains.
	\begin{lem}[{\cite[Theorem~2.2]{RychkovExtension}}]\label{Lem::Extension}
		Let $\Omega\subset\R^n$ be a bounded Lipschitz domain and let $U\supset\overline\Omega$ be an open neighborhood. For every $\eps>0$, there is an extension operator $E$ such that $E:\Bs_{pq}^{s}(\Omega)\to\widetilde \Bs_{pq}^s(\overline U)$ is defined and bounded for all $p,q\in[\eps,\infty]$ and $s\in[-\eps^{-1},\eps^{-1}]$.
	\end{lem}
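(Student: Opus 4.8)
The plan is to deduce the statement from Rychkov's universal extension theorem together with a cutoff, mirroring the multiplier trick already used in the proof of Lemma~\ref{Lem::TrivialInclusion}.

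\textbf{Step 1 (the hard analytic input).} First I would invoke the construction of \cite[Theorem~2.2]{RychkovExtension}: there is a \emph{single} linear operator $E_0$, built intrinsically from a Littlewood--Paley type pair of families $\{\varphi_k\}_{k\ge0}$, $\{\psi_k\}_{k\ge0}$ whose generating kernels $\varphi,\psi$ are compactly supported in a cone ``pointing into'' $\Omega$ and satisfy high-order moment cancellation, such that $(E_0 f)|_\Omega=f$ and $E_0\colon\Bs_{pq}^s(\Omega)\to\Bs_{pq}^s(\R^n)$ is bounded for every $p,q\in(0,\infty]$ and $s\in\R$ simultaneously. The cone condition makes $\varphi_k\ast g$ meaningful for a distribution $g$ given only on $\Omega$ (the test function $\varphi_k(x-\cdot)$ is supported in $\Omega$ for the relevant $x$), and the reconstruction identity $f=\sum_k\psi_k\ast(\varphi_k\ast f)$ then produces the extension; boundedness on the Besov scale is read off from the wavelet/Littlewood--Paley characterisation. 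If one wanted a self-contained argument one would first localise by a finite partition of unity subordinate to coordinate patches flattening $\partial\Omega$, reduce to a special Lipschitz domain (the epigraph of a Lipschitz function), and run the above there; I would treat this reduction as known.

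\textbf{Step 2 (localising the support).} Since $\overline\Omega$ is compact and $U\supset\overline\Omega$ is open, choose $\chi\in C_c^\infty(U)$ with $\chi\equiv1$ on a neighborhood of $\overline\Omega$ and $\supp\chi\Subset U$. By the pointwise multiplier theorem \cite[Theorem~2.8.2]{TriebelTheoryOfFunctionSpacesI}, the map $g\mapsto\chi g$ is bounded on $\Bs_{pq}^s(\R^n)$ for each admissible triple $(s,p,q)$. Set $E:=[g\mapsto\chi g]\circ E_0$. Then $\supp(Ef)\subseteq\supp\chi\subseteq\overline U$, so $Ef\in\widetilde\Bs_{pq}^s(\overline U)$, and since $\chi\equiv1$ on $\Omega$ we get $(Ef)|_\Omega=(E_0f)|_\Omega=f$; hence $E$ is an extension operator. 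Being the composition of two bounded maps, $E\colon\Bs_{pq}^s(\Omega)\to\widetilde\Bs_{pq}^s(\overline U)$ is bounded for every $p,q\in[\eps,\infty]$ and $s\in[-\eps^{-1},\eps^{-1}]$, and $E$ does not depend on $(s,p,q)$.

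\textbf{Main obstacle.} The only genuine difficulty is Step 1, and specifically the \emph{universality} of $E_0$ across the full quasi-Banach range $p,q\in(0,\infty]$ and all $s\in\R$ (including $s\le0$): the classical Calderón/Stein/Whitney extension operators are adapted to the spaces $W^{k,p}$ with $k\in\Z_{\ge0}$ and $p\ge1$ and do not cover this range, which is exactly why the kernel-based construction of \cite{RychkovExtension} is needed. Step 2 is routine: the multiplier norm of $\chi$ does depend on $(s,p,q)$, but it is finite for each fixed triple, which is all the statement asserts.
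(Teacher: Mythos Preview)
Your argument is correct and is exactly the intended way to read the citation. The paper does not give a proof of this lemma at all: it simply attributes the statement to \cite[Theorem~2.2]{RychkovExtension} and then remarks that in fact one can take $\eps=0$ via the universal extension operator of \cite[Theorem~4.1]{RychkovExtension}. Your Step~1 is precisely this input; your Step~2 (post-composing with multiplication by $\chi\in C_c^\infty(U)$, $\chi\equiv1$ near $\overline\Omega$) is the missing bridge from $\Bs_{pq}^s(\R^n)$ to $\widetilde\Bs_{pq}^s(\overline U)$, and it is indeed the same multiplier trick the paper already invoked in the proof of Lemma~\ref{Lem::TrivialInclusion}. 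The only comment is a minor bookkeeping one: Theorem~2.2 in \cite{RychkovExtension} is stated for special Lipschitz domains, so for a bounded Lipschitz $\Omega$ you either localise first (as you indicate) or cite Theorem~4.1 there directly; the paper is slightly loose about this distinction too.
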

	In fact, we allow $\eps=0$ as we can build a \textit{universal extension operator}, see \cite[Theorem~4.1]{RychkovExtension}. 
	
	Here, we also remark that $E:\Bs_{pq}^s(\Omega)\to \Bs_{pq}^s(\R^n)$ is always a topological embedding since it is the right inverse to the restriction mapping $[\tilde f\mapsto\tilde f|_\Omega]:\Bs_{pq}^s(\R^n)\twoheadrightarrow \Bs_{pq}^s(\Omega)$.

	\begin{prop}\label{Prop::SubspaceWavelet}
		Let $\Omega\subset\R^n$ be a bounded Lipschitz domain, and $U$ and $V$ be bounded open sets such that $V\Subset\Omega\Subset U$.    
		Let $\eps>0$, $\Lambda$ be as in \eqref{Eqn::Wavelet::Lambda} (depending on $\eps$), and let $E$ be the extension operator as in Lemma~\ref{Lem::Extension} (depending on $\Omega,U$).
		
		Then there are an $A\ge1$ and sets $R_j\subset S_j\subset Q_j\times G_j$ for $j\ge 0$, such that, for every $p,q\in[\eps,\infty]$ and $s\in[-\eps^{-1},\eps^{-1}]$:
		\begin{enumerate}[(i)]
			\item\label{Item::SubspaceWavelet::Small} $\# R_j=2^{(j - A) n}$ for $j\ge A$ and we have $\Lambda^{-1}:\ell^q(2^{j(s-\frac np+\frac n2)}\cdot\ell^p(R_j))_{j=A}^\infty\to\widetilde \Bs_{pq}^s(\overline V)$. As a result $\Lambda^{-1}:\ell^q(2^{j(s-\frac np+\frac n2)}\cdot\ell^p(R_j))_{j=A}^\infty\to \Bs_{pq}^s(\Omega)$ is a topological embedding.
			\item\label{Item::SubspaceWavelet::Big} $\# S_j=2^{(j+A)n}$ for $j\ge 0$ and we have $\Lambda:\widetilde \Bs_{pq}^s(\overline U)\to\ell^q(2^{j(s-\frac np+\frac n2)}\cdot\ell^p(S_j))_{j=0}^\infty$. As a result $\Lambda\circ E:\Bs_{pq}^s(\Omega)\to\ell^q(2^{j(s-\frac np+\frac n2)}\cdot\ell^p(S_j))_{j=0}^\infty$ is a topological embedding.
		\end{enumerate}
		
	\end{prop}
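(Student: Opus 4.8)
The plan is to read off the index sets $R_j$ and $S_j$ directly from the supports of the wavelets, which is legitimate precisely because those supports --- unlike the norms in \eqref{Eqn::Wavelet::bpqsSpace} --- do not depend on $p,q,s$. Fix $L\ge1$ with $\supp\psi_{jm}^g\subseteq B(m,2^{L-j})$ as in Proposition~\ref{Prop::Wavelet}, and set $S_j^\circ:=\{(m,g)\in Q_j\times G_j:\ B(m,2^{L-j})\cap\overline U\ne\emptyset\}$. Since $\overline U$ is bounded, an elementary lattice count gives $\#S_j^\circ\le c\,2^{jn}$ for all $j\ge0$ with $c=c(n,U,L)$, so for any integer $A\ge1$ with $2^{An}\ge c$ we get $\#S_j^\circ\le 2^{(j+A)n}$. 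Fix also a nonempty ball $B(x_0,\rho)\subseteq V$; when $2^{L-j}<\rho/2$, every $m\in Q_j$ with $|m-x_0|<\rho/2$ satisfies $\overline{B(m,2^{L-j})}\subseteq V$, and there are at least $c'2^{jn}$ such $m$, with $c'=c'(n,\rho)$. Enlarging $A$ if necessary, we may assume in addition that $2^{L-j}<\rho/2$ and $c'2^{jn}\ge 2^{j(n-A)}$ for all $j\ge A$. Now set $R_j:=\emptyset$ for $j<A$ and, for $j\ge A$, let $R_j$ be any set of exactly $2^{j(n-A)}$ pairs $(m,g)$ with $\overline{B(m,2^{L-j})}\subseteq V$; finally enlarge $S_j^\circ$ (adding arbitrary elements of the infinite set $Q_j\times G_j$) to a set $S_j\supseteq S_j^\circ$ with $\#S_j=2^{(j+A)n}$. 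Every $(m,g)\in R_j$ has $B(m,2^{L-j})\subseteq V\subseteq\overline U$, hence lies in $S_j^\circ$, so $R_j\subseteq S_j\subseteq Q_j\times G_j$ with the stated cardinalities, and $A$ depends only on $n,\eps,\Omega,U,V$, not on $p,q,s$.

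For \eqref{Item::SubspaceWavelet::Small}: a sequence $a$ in $\ell^q(2^{j(s-\frac np+\frac n2)}\cdot\ell^p(R_j))_{j=A}^\infty$, extended by zero, lies in the target space of the isomorphism $\Lambda$ of \eqref{Eqn::Wavelet::Lambda}, and $\Lambda^{-1}a=\sum_{j\ge A}\sum_{(m,g)\in R_j}a_{jm}^g\psi_{jm}^g\in\Bs_{pq}^s(\R^n)$ with convergence in $\Ss'$. Each term is supported in $\overline{B(m,2^{L-j})}\subseteq\overline V$, and since the property $\supp(\cdot)\subseteq\overline V$ is preserved under $\Ss'$-convergence, $\Lambda^{-1}a\in\widetilde\Bs_{pq}^s(\overline V)$. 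Thus $\Lambda^{-1}$ restricts to a bounded map into $\widetilde\Bs_{pq}^s(\overline V)$; being the restriction of an isomorphism, it is a topological embedding into $\Bs_{pq}^s(\R^n)$, and composing with the topological embedding $\widetilde\Bs_{pq}^s(\overline V)\hookrightarrow\Bs_{pq}^s(\Omega)$ from Lemma~\ref{Lem::TrivialInclusion} (valid since $V\Subset\Omega$) gives the assertion.

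For \eqref{Item::SubspaceWavelet::Big}: if $f\in\widetilde\Bs_{pq}^s(\overline U)$ then $\supp f\subseteq\overline U$, so $(f,\psi_{jm}^g)=0$ whenever $\supp\psi_{jm}^g\cap\overline U=\emptyset$, i.e.\ whenever $(m,g)\notin S_j^\circ\subseteq S_j$. Hence $\Lambda f$ lies in $\ell^q(2^{j(s-\frac np+\frac n2)}\cdot\ell^p(S_j))_{j=0}^\infty$, so $\Lambda$ restricts to a topological embedding $\widetilde\Bs_{pq}^s(\overline U)\to\ell^q(2^{j(s-\frac np+\frac n2)}\cdot\ell^p(S_j))_{j=0}^\infty$ --- again a restriction of an isomorphism. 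Since $E:\Bs_{pq}^s(\Omega)\to\widetilde\Bs_{pq}^s(\overline U)$ is a topological embedding (being a right inverse of the restriction map, as noted after Lemma~\ref{Lem::Extension}), the composite $\Lambda\circ E$ is a topological embedding into $\ell^q(2^{j(s-\frac np+\frac n2)}\cdot\ell^p(S_j))_{j=0}^\infty$.

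The only real work is in the first paragraph: the single integer $A$ must be chosen after, and large enough for, both the upper bound $\#S_j^\circ\le 2^{(j+A)n}$ and the lower bound that for every $j\ge A$ there are at least $2^{j(n-A)}$ lattice points $m\in 2^{-j}\Z^n$ with $\overline{B(m,2^{L-j})}\subseteq V$; checking that these requirements are compatible (and that $2^{j(n-A)}$ is a legitimate cardinality on the relevant range of $j$) is the crux, and it reduces to elementary counting in $\R^n$. Everything afterwards is formal --- convergence of the wavelet series in $\Ss'$, the support bookkeeping, and the fact that the restriction of a (quasi-)Banach-space isomorphism to a subspace is a topological embedding --- and it is uniform in $p,q,s$ precisely because only the wavelet supports enter the construction.
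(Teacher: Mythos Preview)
Your proof is correct and follows essentially the same route as the paper's: define the index sets from the wavelet supports, bound their cardinalities by elementary lattice counting, and then appeal to the isomorphism property of $\Lambda$ together with Lemma~\ref{Lem::TrivialInclusion} and Lemma~\ref{Lem::Extension}; you are in fact slightly more careful than the paper in explicitly checking $R_j\subseteq S_j$ and that supports are preserved under $\Ss'$-convergence. One caveat worth noting: the cardinality $\#R_j=2^{j(n-A)}$ in the statement is a typo --- the paper's own proof (and its later applications, e.g.\ Proposition~\ref{Prop::DomEmbed}) use $\#R_j=2^{(j-A)n}$, and with that correction your parenthetical concern about $2^{j(n-A)}$ being a legitimate cardinality evaporates, since $2^{(j-A)n}\ge1$ automatically for $j\ge A$.
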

	Here, the constant $A$ depends on the number $L$ in Proposition~\ref{Prop::Wavelet}~\ref{Item::Wavelet::Supp}, the inner radius of $V$ and the diameter of $U$.
	\begin{rmk}
		It is possible to show that for a bounded Lipschitz domain $\Omega$, there is a linear map $\tilde\Lambda$ such that $\tilde\Lambda:\Bs_{pq}^s(\Omega)\xrightarrow{\simeq}\ell^q(2^{j(s-\frac np+\frac n2)}\cdot\ell^p\{1,\dots,2^{nj}\})_{j=0}^\infty$ are isomorphisms for all $p,q\in[\eps,\infty]$ and $s\in[-\eps^{-1},\eps^{-1}]$. However its proof may be complicated as it requires some  re-ordering of certain subsets of $S_j$ (which contains $R_j$).
		
		But for our needs, the weaker results \ref{Item::SubspaceWavelet::Small} and \ref{Item::SubspaceWavelet::Big} are satisfactory.
	\end{rmk}
	
	\begin{proof}[Proof of Proposition~\ref{Prop::SubspaceWavelet}]
		Let $L\ge1$ be from Proposition~\ref{Prop::Wavelet}~\ref{Item::Wavelet::Supp}. We define $\mathfrak R_j,\mathfrak S_j\subset Q_j$ by $\mathfrak R_j:=\{m\in Q_j:B(m,2^{L-j})\subset V\}$ and $\mathfrak S_j:=\{m\in Q_j:B(m,2^{L-j})\cap U\neq\varnothing\}$. 
		
		Recall $Q_j=2^{-j}\Z^n$, thus for a large enough $A\ge 1$ (depending on $U,V,L$) we have $\#\mathfrak R_j\ge 2^{(j-A)n}$ (for $j\ge A$) and $\#\mathfrak S_j\le 2^{(j+A-1)n}$ (for $j\ge0$). 
		
		By Proposition~\ref{Prop::Wavelet}~\ref{Item::Wavelet::Supp} we have $\supp\psi_{jm}^g\subset V$ for all $m\in \mathfrak R_j$ and $g\in G_j$, thus by \eqref{Eqn::Wavelet::bpqsSpace} $\sum_{j=A}^\infty\sum_{m\in \mathfrak R_j,g\in G_j} a_{jm}^g\psi_{jm}^g\in\widetilde \Bs_{pq}^s(\overline V)$ holds for all $\{a_{jm}^g\}_{j=A}^\infty\in \ell^q(2^{j(s-\frac np+\frac n2)}\cdot\ell^p(\mathfrak R_j\times G_j))_{j=0}^\infty$. Taking $R_j\subseteq \mathfrak R_j\times G_j$ be any set such that $\# R_j=2^{(j-A)n}$ and applying Proposition~\ref{Prop::Wavelet}~\ref{Item::Wavelet::Isom} we get $\Lambda^{-1}:\ell^q(2^{j(s-\frac np+\frac n2)}\cdot\ell^p(R_j))_{j=A}^\infty\to\widetilde \Bs_{pq}^s(\overline V)$. Applying Lemma~\ref{Lem::TrivialInclusion} we have topological embedding $\widetilde \Bs_{pq}^s(\overline V)\hookrightarrow \Bs_{pq}^s(\Omega)$. This completes the proof of \ref{Item::SubspaceWavelet::Small}.
		
		Similarly by Proposition~\ref{Prop::Wavelet}~\ref{Item::Wavelet::Supp} and \ref{Item::Wavelet::Sum}, for every $f\in \widetilde \Bs_{pq}^s(\overline U)$, we have $(f,\phi_{jm}^g)_{L^2}=0$ if $m\notin \mathfrak S_j$, hence by Proposition~\ref{Prop::Wavelet}~\ref{Item::Wavelet::Sum} $f=\sum_{j=0}^\infty\sum_{m\in \mathfrak S_j,g\in G_j} (f,\psi_{jm}^g)_{L^2}\cdot\psi_{jm}^g$. Taking $S_j\supseteq \mathfrak S_j\times G_j$ be any set in $Q_j\times G_j$ such that $\# S_j=2^{(j+A)n}$ and applying Proposition~\ref{Prop::Wavelet}~\ref{Item::Wavelet::Isom} we get $\Lambda:\widetilde \Bs_{pq}^s(\overline U)\to\ell^q(2^{j(s-\frac np+\frac n2)}\cdot\ell^p(S_j))_{j=0}^\infty$. Applying Lemma~\ref{Lem::Extension} which gives $E:\Bs_{pq}^s(\Omega)\to \Bs_{pq}^s(\overline U)$, we get the required boundedness of $\Lambda\circ E$. Since $\Lambda$ and $E$ are both topological embeddings, so does $\Lambda\circ E$. This completes the proof of \ref{Item::SubspaceWavelet::Big}.
	\end{proof}

	Now we reduce the discussion from function spaces to sequence spaces.
	
	The following well-known result can  also be deduced from the wavelet decomposition.
	\begin{cor}\label{Cor::EmbedAllIndex}
		Let $s_0,s_1\in\R$ and $0<p_0,p_1,q_0,q_1\le\infty$. 
		\begin{enumerate}[(i)]
			\item\label{Item::EmbedAllIndex::Sob} The inclusion $\Bs_{p_0q_0}^{s_0}(\R^n)\subseteq \Bs_{p_1q_1}^{s_1}(\R^n)$ holds if and only if
			\begin{gather}\label{Eqn::EmbedAllIndex::Sob}
				\big(s_0-s_1=\tfrac n{p_0}-\tfrac n{p_1}\ge0\text{ and }q_0\le q_1\big)\quad
				\text{or}\quad \big(s_0-s_1>\tfrac n{p_0}-\tfrac n{p_1}\ge0\big).
			\end{gather}
			Note that both cases are non-compact.
			
			\item\label{Item::EmbedAllIndex::Dom} Let $\Omega\subset\R^n$ be a bounded Lipschitz domain. Then $\Bs_{p_0q_0}^{s_0}(\Omega)\subseteq \Bs_{p_1q_1}^{s_1}(\Omega)$ holds if and only if
			\begin{gather}\label{Eqn::EmbedAllIndex::Dom1}
				s_0-s_1=\max(0,\tfrac n{p_0}-\tfrac n{p_1})\text{ and }q_0\le q_1;
				\\\label{Eqn::EmbedAllIndex::Dom2}
				\text{or}\quad s_0-s_1>\max(0,\tfrac n{p_0}-\tfrac n{p_1}).
			\end{gather}
			Moreover the embedding is non-compact if and only if \eqref{Eqn::EmbedAllIndex::Dom1} holds.
			
		\end{enumerate}
	\end{cor}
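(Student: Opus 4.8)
The plan is to pull everything back to sequence spaces through the wavelet isomorphism and then argue with elementary facts about the scales $\ell^{p}$ and $\ell^{q}$. Fix $\eps>0$ small enough that Proposition~\ref{Prop::Wavelet} applies simultaneously to all of $p_0,p_1,q_0,q_1,s_0,s_1$, and use the single map $\Lambda=\Lambda_\eps$; write $\alpha_i:=s_i-\tfrac n{p_i}+\tfrac n2$. Since $\Lambda$ records the same wavelet coefficients for both spaces, $\Bs_{p_0q_0}^{s_0}(\R^n)\subseteq\Bs_{p_1q_1}^{s_1}(\R^n)$ holds as sets --- and hence automatically as a bounded inclusion, by the closed graph theorem, both spaces being complete and continuously included in $\Ss'(\R^n)$ --- if and only if $\ell^{q_0}(2^{j\alpha_0}\cdot\ell^{p_0}(\Z))_j\subseteq\ell^{q_1}(2^{j\alpha_1}\cdot\ell^{p_1}(\Z))_j$. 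For part~\ref{Item::EmbedAllIndex::Dom} one uses Proposition~\ref{Prop::SubspaceWavelet} instead: with common $A$, $E$, and $R_j\subseteq S_j$ (both $\#R_j,\#S_j$ of order $2^{jn}$), the space $\Bs_{pq}^s(\Omega)$ embeds via $\Lambda\circ E$ into $\ell^q(2^{j\alpha}\cdot\ell^p(S_j))_j$ and receives via $\Lambda^{-1}$ a copy of $\ell^q(2^{j\alpha}\cdot\ell^p(R_j))_{j\ge A}$.

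For the sequence inclusion on $\R^n$ I would split by which coordinate profile is extremal. Necessity: testing with sequences supported on one level forces $\ell^{p_0}(\Z)\subseteq\ell^{p_1}(\Z)$, i.e.\ $p_0\le p_1$; testing with sequences carrying one nonzero entry per level forces $\alpha_0\ge\alpha_1$, and, when $\alpha_0=\alpha_1$, forces $\ell^{q_0}\subseteq\ell^{q_1}$, i.e.\ $q_0\le q_1$. Sufficiency: if $p_0\le p_1$ then $\|a_j\|_{\ell^{p_1}}\le\|a_j\|_{\ell^{p_0}}$ on every level, and if moreover $\alpha_0>\alpha_1$ (resp.\ $\alpha_0=\alpha_1$ and $q_0\le q_1$) a geometric-series bound in $j$ --- using that $\sum_j 2^{-j(\alpha_0-\alpha_1)q_1}<\infty$ and that $\ell^{q_0}$-sequences are bounded --- (resp.\ the inclusion $\ell^{q_0}\subseteq\ell^{q_1}$) closes the inclusion. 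Rewriting ``$p_0\le p_1$ and ($\alpha_0>\alpha_1$, or $\alpha_0=\alpha_1$ and $q_0\le q_1$)'' in terms of $s$ and $1/p$ is exactly \eqref{Eqn::EmbedAllIndex::Sob}. Non-compactness in both cases is immediate: the level-$0$ wavelets $\{\psi^g_{0m}\}_{m\in Q_0}$ are bounded in $\Bs_{p_0q_0}^{s_0}(\R^n)$, but $\#(Q_0\times G_0)=\infty$ and $\|e_{(m,g)}-e_{(m',g)}\|_{\ell^{p_1}}\ge1$, so they are uniformly separated in $\Bs_{p_1q_1}^{s_1}(\R^n)$.

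For part~\ref{Item::EmbedAllIndex::Dom} the new feature is that the levels are finite, so $\|a_j\|_{\ell^{p_1}(S_j)}\le(\#S_j)^{\max(0,1/p_1-1/p_0)}\|a_j\|_{\ell^{p_0}(S_j)}$ by Remark~\ref{Rmk::LambdaX}; inserting $\#S_j\le2^{(j+A)n}$ and computing exponents gives $2^{j\alpha_1}\|a_j\|_{\ell^{p_1}(S_j)}\le C\,2^{-j\gamma}\cdot 2^{j\alpha_0}\|a_j\|_{\ell^{p_0}(S_j)}$ with $\gamma:=(s_0-s_1)-\max(0,\tfrac n{p_0}-\tfrac n{p_1})$. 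The same geometric-series bound then shows that for $f\in\Bs_{p_0q_0}^{s_0}(\Omega)$ the extension $Ef$ lies in $\Bs_{p_1q_1}^{s_1}(\R^n)$ --- whence $f=(Ef)|_\Omega\in\Bs_{p_1q_1}^{s_1}(\Omega)$ --- as soon as $\gamma>0$, or $\gamma=0$ and $q_0\le q_1$, i.e.\ \eqref{Eqn::EmbedAllIndex::Dom1}--\eqref{Eqn::EmbedAllIndex::Dom2}. For the converse, since $\supp\psi_{jm}^g\subset V\subset\Omega$ for $m$ indexed by $R_j$, the composition $\Lambda\circ E\circ\iota\circ\Lambda^{-1}$ (with $\iota$ the inclusion) fixes the $R_j$-coordinates, so boundedness of $\iota$ forces $\ell^{q_0}(2^{j\alpha_0}\cdot\ell^{p_0}(R_j))_{j\ge A}\hookrightarrow\ell^{q_1}(2^{j\alpha_1}\cdot\ell^{p_1}(R_j))_{j\ge A}$; testing this with sequences constant on a block of size $N_j\le\#R_j$ per level --- $N_j=1$ when $p_0\le p_1$, $N_j=\#R_j$ when $p_0>p_1$ --- yields exactly $\gamma\ge0$, together with $q_0\le q_1$ when $\gamma=0$, ruling out everything else.

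It remains to address compactness. If $\gamma>0$ (case \eqref{Eqn::EmbedAllIndex::Dom2}), a bounded sequence in $\Bs_{p_0q_0}^{s_0}(\Omega)$ gives coefficients $a^{(k)}$ bounded in $\ell^{q_0}(2^{j\alpha_0}\cdot\ell^{p_0}(S_j))_j$; passing to a coordinatewise-convergent subsequence (each $S_j$ finite), the uniform bound on $2^{j\alpha_0}\|a^{(k)}_j\|_{\ell^{p_0}(S_j)}$ together with the convergence of $\sum_{j>J}2^{-j\gamma q_1}$ makes the high-level tails of the target quasi-norm uniformly small, so the subsequence converges in $\ell^{q_1}(2^{j\alpha_1}\cdot\ell^{p_1}(S_j))_j$, hence in $\Bs_{p_1q_1}^{s_1}(\R^n)$ via $\Lambda^{-1}$ and then in $\Bs_{p_1q_1}^{s_1}(\Omega)$ by restriction. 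If $\gamma=0$ (case \eqref{Eqn::EmbedAllIndex::Dom1}), the functions with coefficients constant on a block of size $N_j$ per level ($N_j=1$ or $N_j=\#R_j$ according to whether $p_0\le p_1$ or $p_0>p_1$), normalized in $\Bs_{p_0q_0}^{s_0}(\Omega)$, stay bounded away from $0$ and pairwise separated in $\Bs_{p_1q_1}^{s_1}(\Omega)$ (distinct dyadic levels and $\gamma=0$), so the embedding is non-compact. The only genuine obstacle is the bookkeeping: carrying the three exponents $s,1/p,1/q$ through the wavelet isomorphism and recognizing that the extremal test sequence is ``one coefficient per level'' when $p_0\le p_1$ but a ``full level'' when $p_0>p_1$; once the problem is on sequences, every estimate is routine.
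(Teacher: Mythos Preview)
Your proposal is correct and follows the same route as the paper: reduce to the corresponding sequence spaces via the wavelet isomorphism (Proposition~\ref{Prop::Wavelet} on $\R^n$, Proposition~\ref{Prop::SubspaceWavelet} on $\Omega$) and then settle the elementary $\ell^p$/$\ell^q$ inclusion and compactness questions there. The paper's own proof simply asserts that ``one can check'' the sequence-space statements and defers to standard references, whereas you actually sketch those checks (the extremal test vectors, the geometric-series tail bound, the diagonal compactness argument); in particular your observation that $\Lambda\circ E\circ\iota\circ\Lambda^{-1}$ fixes the $R_j$-coordinates---because the wavelets indexed by $R_j$ are supported in $V\subset\Omega$ and $E$ leaves $\Omega$ untouched---is exactly the mechanism behind the paper's appeal to Proposition~\ref{Prop::SubspaceWavelet}~\ref{Item::SubspaceWavelet::Small}.
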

	\begin{proof}
		The proof is standard, see e.g. \cite[Chapter~2.7.1]{TriebelTheoryOfFunctionSpacesI}, \cite[Chapter~1.11.1]{TriebelTheoryOfFunctionSpacesIII} or \cite[Remark~2.87]{TriebelTheoryOfFunctionSpacesIV}. 
		
		Indeed one can check that $\ell^{q_0}(2^{j(s_{0} -\frac n{p_0}+\frac n2)}\cdot\ell^{p_0})_{j=0}^\infty\subset\ell^{q_1}(2^{j(s_{1} -\frac n{p_1}+\frac n2)}\cdot\ell^{p_1})_{j=0}^\infty$ holds if and only if either \eqref{Eqn::EmbedAllIndex::Sob} holds; and the map is always non-compact. By Proposition~\ref{Prop::Wavelet}~\ref{Item::Wavelet::Isom} we get \ref{Item::EmbedAllIndex::Sob}.
		
		Similarly $\ell^{q_0}(2^{j(s_{0} -\frac n{p_0}+\frac n2)}\cdot\ell^{p_0}\{1,\dots,2^{nj}\})_{j=0}^\infty\subset\ell^{q_1}(2^{j(s_{1} -\frac n{p_1}+\frac n2)}\cdot\ell^{p_1}\{1,\dots,2^{nj}\})_{j=0}^\infty$ if and only if either \eqref{Eqn::EmbedAllIndex::Dom1} or \eqref{Eqn::EmbedAllIndex::Dom2} holds; and the map is compact if and only if \eqref{Eqn::EmbedAllIndex::Dom2} holds. 
		
		By Proposition~\ref{Prop::SubspaceWavelet}~\ref{Item::SubspaceWavelet::Small} we see that $\Bs_{p_0q_0}^{s_0}(\Omega)\subset \Bs_{p_1q_1}^{s_1}(\Omega)$ implies \eqref{Eqn::EmbedAllIndex::Dom1} or \eqref{Eqn::EmbedAllIndex::Dom2}. By Proposition~\ref{Prop::SubspaceWavelet}~\ref{Item::SubspaceWavelet::Big} and using $\Lambda\circ E$ we see that \eqref{Eqn::EmbedAllIndex::Dom1} or \eqref{Eqn::EmbedAllIndex::Dom2} implies $\Bs_{p_0q_0}^{s_0}(\Omega)\subset \Bs_{p_1q_1}^{s_1}(\Omega)$. The compactness of the embedding $\Bs_{p_0q_0}^{s_0}(\Omega)\hookrightarrow \Bs_{p_1q_1}^{s_1}(\Omega)$ follows the same argument via Proposition~\ref{Prop::SubspaceWavelet}~\ref{Item::SubspaceWavelet::Small} and \ref{Item::SubspaceWavelet::Big}.
	\end{proof}

	\section{Embeddings Between Sequence Spaces}\label{Section::SeqSpace}
	
	From the wavelet decomposition in the previous section, the problems are reduced to the embedding between sequence spaces. The goal of this section is to prove the following: let $0<q_0<q_1\le\infty$.
	\begin{itemize}
		\item (Proposition~\ref{Prop::SeqEmbed1}) Let $p_0<p_1$. Then $\ell^{q_0}(\ell^{p_0})\hookrightarrow\ell^{q_1}(\ell^{p_1})$ is finitely strictly singular.
		\item (Corollary~\ref{Cor::DiagCorFSS}) Assume $p_1<\infty$. Then $\ell^{q_0}(L^\infty[0,1])\hookrightarrow\ell^{q_1}(L^{p_1}[0,1])$ is  finitely strictly singular.
		\item (Propositions~\ref{Prop::SeqEmbed2} and \ref{Prop::SeqEmbed2NotFSS}) Assume either $p_1 \leq p_0<\infty$ or $p_0=p_1=\infty$.  Then, 
		\begin{displaymath}
			\ell^{q_0}(2^{-jn/p_0}\cdot\ell^{p_0}\{1,\dots,2^j\})_{j=0}^\infty\hookrightarrow\ell^{q_1}(2^{-jn/p_1}\cdot\ell^{p_1}\{1,\dots,2^j\})_{j=0}^\infty
		\end{displaymath} is strictly singular but not finitely strictly singular.
	\end{itemize}
	Note that when $q_0=q_1$ all the above embeddings are not strictly singular.

	These results are inspired from Plichko \cite{PlichkoStrictlySingular} and Lef\`evre, Rodr\'iguez-Piazza \cite{LefevrePiazzaFSSApplication}. Although some of the techniques might not be new, to the best of the authors' knowledge, the proof is not found in the standard literature. Thus we attach a full proof in detail.

	\begin{lem}\label{Lem::TrivialNSS}
		Let $(T_j:X_j\to Y_j)_{j=0}^\infty$ be a family of bounded linear maps between quasi-normed spaces such that $0<\limsup_{j\to\infty}\|T_j\|_{X_j\to Y_j}<\infty$. Define $\widehat T:(x_j)_{j=0}^\infty\mapsto(T_jx_j)_{j=0}^\infty$. 
		
		Then for every $0<q\le\infty$, $\widehat T:\ell^q(X_j)_{j=0}^\infty\to\ell^q(Y_j)_{j=0}^\infty$ is not strictly singular.
	\end{lem}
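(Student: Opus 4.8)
The plan is to contradict strict singularity directly by producing an infinite-dimensional subspace $Z\subseteq\ell^q(X_j)_{j=0}^\infty$ on which $\widehat T$ is bounded below; since $\widehat T$ is automatically bounded (the hypotheses force $\sup_j\|T_j\|<\infty$), this makes $\widehat T|_Z$ a topological isomorphism onto $\widehat T(Z)$. Write $c:=\limsup_{j\to\infty}\|T_j\|_{X_j\to Y_j}\in(0,\infty)$. First I would extract a strictly increasing index sequence $j_1<j_2<\cdots$ with $\|T_{j_k}\|_{X_{j_k}\to Y_{j_k}}\ge c/2$ for all $k$, and then, using the definition of the operator quasi-norm as a supremum, choose vectors $x_k\in X_{j_k}$ with $\|x_k\|_{X_{j_k}}=1$ and $\|T_{j_k}x_k\|_{Y_{j_k}}\ge c/4$.

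Next I would let $Z$ be the closed linear span in $\ell^q(X_j)_{j=0}^\infty$ of the disjointly supported ``block'' vectors $u_k:=x_k\otimes e_{j_k}$, $k\ge1$. Since the coordinates $j_k$ are pairwise distinct and each $\|x_k\|_{X_{j_k}}=1$, homogeneity of the quasi-norm yields the exact identity
\[
\Big\|\sum\nolimits_k a_k u_k\Big\|_{\ell^q(X_j)_{j=0}^\infty}=\|(a_k)_k\|_{\ell^q}
\]
for every finitely supported scalar sequence $(a_k)_k$ (with the sum replaced by a supremum when $q=\infty$), so $(a_k)_k\mapsto\sum_k a_ku_k$ extends to an isometric isomorphism of $\ell^q$ onto $Z$; in particular $\dim Z=\infty$. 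Finally, since $\widehat Tu_k=(T_{j_k}x_k)\otimes e_{j_k}$ are again disjointly supported, the same computation gives
\[
\Big\|\widehat T\Big(\sum\nolimits_k a_ku_k\Big)\Big\|_{\ell^q(Y_j)_{j=0}^\infty}=\Big(\sum\nolimits_k|a_k|^q\,\|T_{j_k}x_k\|_{Y_{j_k}}^q\Big)^{1/q}\ge\frac c4\,\Big\|\sum\nolimits_k a_ku_k\Big\|_{\ell^q(X_j)_{j=0}^\infty},
\]
which is the desired lower bound; hence $\widehat T|_Z$ is a topological isomorphism onto its image and $\widehat T$ is not strictly singular.

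I do not expect a genuine obstacle here. The two points that need a little care are: (i) the argument is deliberately arranged so as never to invoke a uniform quasi-triangle inequality among the $X_j$ — one only compares disjointly supported blocks, where the displays above are exact equalities rather than estimates — so the possible absence of such a uniform constant is harmless; and (ii) the endpoint $q=\infty$, which is dealt with by the routine replacement of sums by suprema throughout.
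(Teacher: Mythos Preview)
Your argument is correct and is essentially the same as the paper's: build an infinite-dimensional subspace spanned by disjointly supported blocks $x_k\otimes e_{j_k}$ on which $\widehat T$ is bounded below by a positive constant. If anything, your version is slightly more careful than the paper's in explicitly passing to a subsequence $(j_k)$ along which $\|T_{j_k}\|\ge c/2$ (the hypothesis $\limsup_j\|T_j\|>0$ does not literally give a uniform lower bound for \emph{all} $j$, which the paper's wording glosses over).
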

	\begin{proof}
		Let $e_j=(0,\dots,0,\underset j1,0,\dots)\in\ell^{\infty}$ be the standard elements. By assumption, there exist $c>0$ and $x_j\in X_j$ such that for all $j\geq 0$, $\|x_j\|_{X_j}=1$ and $\|T_jx_j\|_{Y_j}\geq c$.
		
		Next, consider the subspace $V := \Span\{ x_j \otimes e_j : j \geq 0\} \subset\ell^q(X_j)_{j=0}^\infty $. For any sequence $(a_{j})_{j = 0}^{\infty}\subset\R$, we have $\| \sum_{j=0}^{\infty} a_{j} x_{j} \otimes e_j \|_{\ell^q(X_j)_j}=\|(a_j)_j\|_{\ell^q}$ and $\| \widehat T\sum_{j = 0}^{\infty} a_{j} x_{j} \otimes e_{j}\|_{\ell^q(Y_j)_j}\ge c\|(a_j)_j\|_{\ell^q}$,    
		giving the non-strictly singularity.
	\end{proof}
	
	In order to prove the required propositions, need to prove some results on the ambient spaces and the apply them to their subspaces.

	In the proof we apply the diagonal theorem from \cite[Theorem~3.3]{LefevrePiazzaFSSApplication}:
	\begin{lem}\label{Lem::DiagThm}
		Let $(X_j,Y_j)_{j=0}^\infty$ be Banach spaces. Let $T_j:X_j\to Y_j$ ($j\ge0$) be bounded linear maps where the operator norms are uniformly bounded over $j\ge0$, i.e. $\sup_{j\ge0}\sup_{\|x_j\|_{X_j}=1}\|T_jx_j\|_{Y_j}<\infty$. 
		For given $0<p<q\le\infty$ we define $\widehat T_{p,q} : \ell^{p} (X_{j})_{j = 0}^{\infty} \to \ell^{q} (X_{j})_{j = 0}^{\infty}$ by $\widehat T_{p,q}:(x_j)_{j=0}^\infty\mapsto(T_jx_j)_{j=0}^\infty$.
		
		Then $\widehat T_{p,q} $ is finitely strictly singular if  $(T_j)_j$ are uniformly finitely strictly singular. 
		
		More precisely, that is $\lim_{n\to\infty}b_n(\widehat T_{p,q})=0$ if $\lim_{n\to\infty}\sup_{j\ge0}b_n(T_j)=0$. 
	\end{lem}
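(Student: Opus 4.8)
The plan is to translate the statement into a quantitative estimate on Bernstein numbers and then combine two essentially independent mechanisms: the gap $p<q$, which forces every unit vector to be ``concentrated'' on only a bounded number of blocks, and the uniform finite strict singularity of the $T_j$, which neutralizes those finitely many blocks one at a time. Throughout set $C:=\sup_{j\ge0}\|T_j\|_{X_j\to Y_j}<\infty$ and $\beta_n:=\sup_{j\ge0}b_n(T_j)$, so that by hypothesis $\beta_n\to0$; note also $\|\widehat T_{p,q}\|\le C$ since $\ell^p\hookrightarrow\ell^q$ with norm $1$. Because $T$ is finitely strictly singular precisely when $b_n(T)\to0$, it suffices (and gives the quantitative ``more precisely'' form of the lemma) to show that for every $\eps>0$ there is $N=N(\eps)$ such that $b_N(\widehat T_{p,q})\le\eps$, i.e. every subspace $E\subseteq\ell^p(X_j)_{j=0}^\infty$ with $\dim E=N$ contains a unit vector $x$ with $\|\widehat T_{p,q}x\|\le\eps$.

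The first step is the \emph{block-splitting estimate}, which is where $p<q$ enters. Fix a threshold $t\in(0,1)$ to be chosen from $\eps$ alone. For a unit vector $x=(x_j)_j$ call an index $j$ \emph{large} if $\|x_j\|_{X_j}>t$ and \emph{small} otherwise. Since $\sum_j\|x_j\|_{X_j}^p=1$, there are fewer than $t^{-p}$ large indices -- a bound depending only on $t$, hence only on $\eps$, and \emph{not} on $E$ or on how many blocks $E$ meets. On the small indices, $\|x_j\|^q\le t^{q-p}\|x_j\|^p$, which yields $\big(\sum_{j\ \mathrm{small}}\|T_jx_j\|_{Y_j}^q\big)^{1/q}\le C\,t^{1-p/q}$ (read $Ct$ when $q=\infty$). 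Choosing $t$ so that $Ct^{1-p/q}\le\eps/2$, it remains to produce a unit $x\in E$ whose \emph{large} part satisfies $\big(\sum_{j\ \mathrm{large}}\|T_jx_j\|_{Y_j}^q\big)^{1/q}\le\eps/2$; in other words the problem is reduced, up to a uniformly controlled error, to at most $K:=\lceil t^{-p}\rceil$ ``active'' blocks of $x$, with $K$ depending only on $\eps$.

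The second step handles those $\le K$ blocks by a finite passage to subspaces using uniform finite strict singularity. Starting from $E$, one shrinks the working subspace $E'\subseteq E$ finitely many times: at each stage one uses the block-splitting estimate to locate an active block $j$ for some unit vector of $E'$; if the coordinate projection $P_j(E')\subseteq X_j$ is high-dimensional, pick via $b_{\dim P_j(E')}(T_j)\le\beta_{\dim P_j(E')}$ an almost-null direction $u$ and restrict to $\{x\in E':P_jx\in\R u\}$, so that block $j$ thereafter contributes at most $\beta_{\dim P_j(E')}\|x\|$ for \emph{every} vector of the new subspace; if instead all remaining active blocks have low-dimensional coordinate projections, then $E'$ is ``thinly spread'' and one finds a unit vector of $E'$ supported nearly evenly over many blocks, for which the crude bound $\|\widehat T_{p,q}x\|_{\ell^q}\le C(\#\text{blocks})^{1/q-1/p}$ is already $\le\eps/2$. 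Each high-dimensional step costs a definite fraction of the dimension, so after $O(K)$ steps one is left with a nonzero subspace on which $\|\widehat T_{p,q}x\|\le\eps\|x\|$; taking $N(\eps)$ large enough in terms of $K$, the total dimension loss, and the index $n_0$ beyond which $\beta_{n}\le\eps/(2K^{1/q})$, finishes the proof.

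The main obstacle -- and the place the argument must be run carefully -- is the coupling between blocks. Finite strict singularity of a single $T_j$ supplies only \emph{one} almost-null direction, not an almost-null subspace, so one cannot take an arbitrary preimage of $u$ in $E'$ (its other coordinates are uncontrolled); restricting the $j$-th coordinate to the line $\R u$ is exactly what keeps the remaining blocks under control, at the cost of a dimension drop that must be tracked through all the rounds, together with the separate ``thinly spread'' alternative that has no high-dimensional block to exploit. Closing this bookkeeping is precisely where both hypotheses are used in an essential way: $0<p<q$ makes the small-block tail summably small and, crucially, caps the number of active blocks by a constant independent of $E$, while $\sup_{j}b_n(T_j)\to0$ is what lets each active block be neutralized with a loss that depends only on how many dimensions that block carried. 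This reproduces, with explicit control on $N(\eps)$, the diagonal theorem invoked above.
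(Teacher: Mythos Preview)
Your first step (the block-splitting via $p<q$) is correct and useful, but the second step has a genuine gap that is not just bookkeeping. The set of ``large'' blocks depends on the individual unit vector, not on the subspace $E'$: after you neutralize a block $j$ and pass to the smaller subspace, the next unit vector you examine may have an entirely different set of large blocks, so there is no reason the procedure stops after $O(K)$ rounds---the union of large-block sets over all unit vectors of $E'$ can be infinite. In addition, a single neutralization $E'\mapsto\{x\in E':P_jx\in\R u\}$ drops the dimension by $\dim P_j(E')-1$, which can be almost all of $\dim E'$; this is not a ``definite fraction''. Finally, the ``thinly spread'' alternative is asserted but not proved: knowing that every $P_j(E')$ is low-dimensional does not by itself yield a unit vector spread evenly over many blocks, since the map $x\mapsto(\|x_j\|_{X_j})_j$ is nonlinear and no Plichko-type flat-vector lemma applies to it directly. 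These three points together mean the iteration as described neither terminates in a controlled number of steps nor has a working exit clause.

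For comparison, the paper does not attempt a direct argument at all. It treats the Banach-space case $1\le p<q=\infty$ as a black box (the Lef\`evre--Piazza diagonal theorem), extends to $0<p<1$ by the factorization $\ell^p(X_j)\hookrightarrow\ell^{\max(p,1)}(X_j)\xrightarrow{\widehat T_{\max(p,1),\infty}}\ell^\infty(Y_j)$, and then reduces general $q<\infty$ to $q=\infty$ via the one-line H\"older estimate
\[
\|\widehat T_{p,q}x\|_{\ell^q(Y_j)}^q\le\|x\|_{\ell^p(X_j)}^p\,\|\widehat T_{p,\infty}x\|_{\ell^\infty(Y_j)}^{\,q-p},
\]
which gives $b_n(\widehat T_{p,q})\le b_n(\widehat T_{p,\infty})^{1-p/q}$. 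If you want a fully self-contained proof, the real work is inside that cited theorem, and your sketch does not yet reproduce it.
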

	\begin{rmk}
		When $1\le p<q\le\infty$, this is exactly the result in \cite{LefevrePiazzaFSSApplication}. It is possible that the diagonal theorem holds for  $(X_j,Y_j)_{j=0}^\infty$ as well. But for our application this is already enough.
	\end{rmk}
	\begin{proof}[Proof of Lemma~\ref{Lem::DiagThm}] We prove the result by assuming \cite[Theorem~3.3]{LefevrePiazzaFSSApplication}. 
		
		Using composition $\ell^p(X_j)_{j=0}^\infty\hookrightarrow \ell^{\max(p,1)}(X_j)_{j=0}^\infty\xrightarrow{\widehat T_{\max(p,1),\infty}}\ell^\infty(Y_j)_{j=0}^\infty$ and applying \cite[Theorem~3.3]{LefevrePiazzaFSSApplication} on $\widehat T_{\max(p,1),\infty}$  we see that $\widehat T_{p,\infty}$ is finitely strictly singular for all $0<p<\infty$.
		
		When $q<\infty$, without loss of generality we can assume $\|T_j\|_{X_j\to Y_j}\le1$ for all $j\ge0$. Therefore
		\begin{align*}
			\|\widehat T_{p,q}x\|_{\ell^q(Y_j)_{j=0}^\infty}^q=&\sum_{j=0}^\infty\|T_jx_j\|_{Y_j}^q\le \sum_{j=0}^\infty\|x_j\|_{X_j}^p\|T_jx_j\|_{Y_j}^{q-p}\le\|x\|_{\ell^p(X_j)_{j=0}^\infty}^p\sup_{j\ge0}\|T_jx_j\|_{Y_j}^{q-p}
			\\
			=&\|x\|_{\ell^p(X_j)_{j=0}^\infty}^p\|\widehat T_{p,\infty}x\|_{\ell^\infty(Y_j)_{j=0}^\infty}^{q-p}.
		\end{align*}
		
		By assumption, for every $n$-dimensional subspace $V\subset\ell^p(X_j)_{j=0}^\infty$ there is a unit vector $x\in V$ such that $\| \widehat{T}_{p,\infty}x\|_{\ell^\infty(Y_j)_j}\le b_n(\widehat T_{p,\infty})\|x\|_{\ell^p(X_j)_j}$. Therefore $\|\widehat T_{p,q}x\|_{\ell^q(Y_j)_j}\le \|\widehat T_{p,\infty}x\|_{\ell^\infty(Y_j)_j}^{1-p/q}\le b_n(\widehat T_{p,\infty})^{1-p/q}$. Since $\widehat T_{p,\infty}$ is finitely strictly singular by the above, the right hand side tends to $0$, which concludes the proof.
	\end{proof}
	\begin{cor}\label{Cor::DiagCorFSS}
		For every $0<p<\infty$ and $0<q_0<q_1\le\infty$, $\ell^{q_0}(L^\infty[0,1])\hookrightarrow \ell^{q_1}(L^p[0,1])$ is finitely strictly singular.
	\end{cor}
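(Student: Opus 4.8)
The plan is to deduce Corollary~\ref{Cor::DiagCorFSS} from Lemma~\ref{Lem::DiagThm} by choosing the right family of operators $T_j$. The key observation is that for a \emph{fixed} $j$, the identity map $L^\infty[0,1]\hookrightarrow L^p[0,1]$ is bounded with norm $1$ (since $[0,1]$ has total measure $1$), and — this is the crucial input — it is finitely strictly singular. So I would set $X_j := L^\infty[0,1]$, $Y_j := L^p[0,1]$, and $T_j := \iota$ the inclusion, for every $j\ge 0$. Then $\widehat T_{q_0,q_1}$ in the notation of Lemma~\ref{Lem::DiagThm} is exactly the map $\ell^{q_0}(L^\infty[0,1])\to\ell^{q_1}(L^p[0,1])$ we want, the operator norms $\|T_j\|=1$ are uniformly bounded, and the hypothesis ``$(T_j)_j$ uniformly finitely strictly singular'' reduces to the single statement ``$b_n(\iota)\to 0$ for $\iota:L^\infty[0,1]\hookrightarrow L^p[0,1]$'', because all the $T_j$ are the same operator.

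Thus the real content is: \emph{the inclusion $L^\infty[0,1]\hookrightarrow L^p[0,1]$ is finitely strictly singular}, i.e. $b_n\to 0$. I would prove this by a direct estimate on finite-dimensional subspaces. Given an $n$-dimensional subspace $E\subset L^\infty[0,1]$, I want a unit vector $f\in E$ (unit in $L^\infty$) with $\|f\|_{L^p}$ small. The standard trick: partition $[0,1]$ into $N$ dyadic (or equal-length) subintervals $I_1,\dots,I_N$ with $N$ chosen large compared to $n$; the linear functionals $f\mapsto \int_{I_k} f$ (or $f\mapsto$ average of $f$ over $I_k$) cut down dimension, but more efficiently one uses that on an $n$-dimensional space one can find a unit-$L^\infty$ function vanishing — or being small — on a union of many of the intervals. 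Concretely, for each $k$ pick the functional $\mu_k(f) = \fint_{I_k} f$; since $\dim E = n$, if $N > n$ then... actually the cleanest route is: the functions in $E$, being $n$-dimensional, when restricted to the $N$ intervals give vectors in $\mathbb{R}^N$ whose span has dimension $\le n$; choosing $N$ large, by a volume/Borsuk-type or simple linear-algebra argument one finds $f\in E$, $\|f\|_{L^\infty}=1$, which is ``flat'' on most intervals. I would instead lean on the already-known fact that $\mathrm{id}: \ell^\infty_N \to \ell^p_N$ has Bernstein numbers $b_m \le C (m/N)^{1/p}$ type decay (this is classical, cf. the Lorentz-sequence-space literature cited as \cite{LangNekvinda}), combined with the fact that any $n$-dimensional $E\subset L^\infty[0,1]$ embeds $(1+\delta)$-isomorphically into some $\ell^\infty_N$ by sampling at enough points and correspondingly $L^p$ is comparable to $\ell^p_N$ with the normalizing weights — so $b_n(\iota_{L^\infty\to L^p}) \le b_n(\iota_{\ell^\infty_N \to \ell^p_N}) \cdot (1+\delta) \to 0$ as $n\to\infty$ uniformly in the auxiliary $N$.

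So the structure is: (1) reduce to the single-operator FSS statement via Lemma~\ref{Lem::DiagThm}; (2) prove $b_n(L^\infty[0,1]\hookrightarrow L^p[0,1])\to 0$. I expect step (2) to be the main obstacle, and within it the delicate point is passing from the continuous spaces $L^\infty[0,1], L^p[0,1]$ to finite-dimensional $\ell^\infty_N,\ell^p_N$ uniformly — one must be careful that the Bernstein-number bound for $\ell^\infty_N\to\ell^p_N$ is of the form $b_n \le c(n)$ with $c(n)\to 0$ \emph{independently of} $N$ (which it is, e.g. $b_n \lesssim n^{-1/p'}$-type or at worst one shows the relevant sup over subspaces is small once $n$ is large regardless of ambient dimension), since the approximating $N$ grows as the subspace is refined. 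Once that uniformity is in hand, everything assembles immediately: Lemma~\ref{Lem::DiagThm} applied with the constant family $T_j = \iota$ gives $\lim_{n\to\infty} b_n(\widehat T_{q_0,q_1}) = 0$, i.e. $\ell^{q_0}(L^\infty[0,1])\hookrightarrow\ell^{q_1}(L^p[0,1])$ is finitely strictly singular, as claimed.
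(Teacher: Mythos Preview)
Your overall strategy---apply the diagonal theorem (Lemma~\ref{Lem::DiagThm}) to the constant family $T_j=\iota:L^\infty[0,1]\hookrightarrow L^p[0,1]$---is exactly the paper's approach, and your reduction ``uniformly FSS $\Leftrightarrow$ $b_n(\iota)\to0$ for the single operator $\iota$'' is correct. Two points of divergence are worth noting.

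First, a technical gap: Lemma~\ref{Lem::DiagThm} is stated for \emph{Banach} spaces $X_j,Y_j$, but for $0<p<1$ the space $L^p[0,1]$ is only quasi-Banach, so you cannot invoke the lemma directly with $Y_j=L^p$. The paper sidesteps this by first applying Lemma~\ref{Lem::DiagThm} with target $L^{\max(p,2)}[0,1]$ (a genuine Banach space), obtaining that $\ell^{q_0}(L^\infty[0,1])\hookrightarrow\ell^{q_1}(L^{\max(p,2)}[0,1])$ is finitely strictly singular, and then composing with the bounded inclusion $\ell^{q_1}(L^{\max(p,2)}[0,1])\hookrightarrow\ell^{q_1}(L^p[0,1])$. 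This is a one-line fix you should incorporate.

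Second, for step~(2) the paper does not argue via discretization to $\ell^\infty_N\to\ell^p_N$ as you propose; instead it simply cites the literature (the proof of \cite[Theorem~5.2]{GrandpaRudin}, see also \cite[Corollary~29]{HernandezSemenovFSS}), which gives the explicit bound $b_n(L^\infty[0,1]\hookrightarrow L^p[0,1])\le n^{-1/\max(p,2)}$. Your discretization sketch is plausible in spirit but the passage from $L^\infty$ to $\ell^\infty_N$ by ``sampling'' is delicate (elements of $L^\infty$ are equivalence classes, not pointwise-defined functions), and you would need averaging over fine partitions together with a uniform-in-$N$ Bernstein-number estimate; the cited references avoid this detour entirely.
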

	\begin{proof}
		It is known that $L^\infty[0,1]\hookrightarrow L^p[0,1]$ is finitely strictly singular. See e.g. the proof \cite[Theorem~5.2]{GrandpaRudin}. Although the statement \cite[Theorem~5.2]{GrandpaRudin} merely says that $L^\infty[0,1]\hookrightarrow L^p[0,1]$ is strictly singular, its proof gives the control $b_n(L^\infty[0,1]\hookrightarrow L^p[0,1])\le n^{-1/\max(p,2)}$, which tends to zero as $n\to\infty$. See also \cite[Corollary~29]{HernandezSemenovFSS}.
		
		Applying Lemma~\ref{Lem::DiagThm} the map $\ell^{q_0}(L^\infty[0,1])\hookrightarrow \ell^{q_1}(L^{\max(p,2)}[0,1])$ is also finitely strictly singular. Therefore by composing the map $\ell^{q_1}(L^{\max(p,2)}[0,1])\hookrightarrow \ell^{q_1}(L^p[0,1])$ we conclude the proof.
	\end{proof}
	
	The proof of Theorem~\ref{Thm::ClassifyRn}~\ref{Item::ClassifyRn::FSS} is reduced to the following statement on the sequence spaces.
	\begin{prop}\label{Prop::SeqEmbed1}
		Let $0<p_0<p_1\le\infty$ and $0<q_0<q_1\le\infty$. 
		\begin{equation*}
			b_n\big(\ell^{q_0}(\ell^{p_0})\hookrightarrow\ell^{q_1}(\ell^{p_1})\big)\le n^{-\min(\frac1{p_0},\frac1{q_0})(1-\max(\frac{q_0}{q_1},\frac{p_0}{p_1}))}.
		\end{equation*}
		
		In particular $\ell^{q_0}(\ell^{p_0})\hookrightarrow\ell^{q_1}(\ell^{p_1})$ is finitely strictly singular.
	\end{prop}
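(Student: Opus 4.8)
The goal is to bound the Bernstein numbers of the diagonal-type embedding $\iota:\ell^{q_0}(\ell^{p_0})\hookrightarrow\ell^{q_1}(\ell^{p_1})$ for $p_0<p_1$, $q_0<q_1$. I would first reduce to a \emph{finite-dimensional} model: the Bernstein number $b_n(\iota)$ equals the supremum over $n$-dimensional subspaces $Z$ of $\inf_{x\in Z,\|x\|=1}\|\iota x\|$, so I only need, for each $n$, to exhibit a single normalized vector of small image norm inside every $n$-dimensional $Z\subset\ell^{q_0}(\ell^{p_0})$. The natural device is a \emph{dimension-counting / projection} argument: restrict attention to finitely many "blocks" $j$ and finitely many coordinates inside each block, so that $Z$ (after a small perturbation, which is harmless for quasi-norms up to the uniform constant) sits inside a finite-dimensional space $\ell^{q_0}_M(\ell^{p_0}_N)$ of controlled dimension, and then use that any subspace of dimension $n$ of a space of dimension $D$ must intersect any coordinate-flat of codimension $<n$. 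Concretely, by a pigeonhole/averaging argument one finds a unit $x\in Z$ whose mass is "spread out" — i.e., supported so that no single block carries too much $\ell^{q_0}$-mass and within each block no few coordinates carry too much $\ell^{p_0}$-mass — and for such spread-out vectors the passage from $(q_0,p_0)$ to $(q_1,p_1)$ loses a definite power of $n$.

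**Key steps, in order.** (1) Record the elementary inequality: for a nonnegative sequence $a=(a_k)$ with $\|a\|_{\ell^{p_0}}=1$, if additionally the decreasing rearrangement satisfies $a_k^*\le k^{-1/p_0}$ (or more simply if $a$ is supported on $\ge m$ coordinates in a "flat" way), then $\|a\|_{\ell^{p_1}}\lesssim m^{-(1/p_0-1/p_1)}$; the same with $(q_0,q_1)$ in place of $(p_0,p_1)$. The exponent $\min(\frac1{p_0},\frac1{q_0})(1-\max(\frac{q_0}{q_1},\frac{p_0}{p_1}))$ in the statement is exactly what comes out of combining these two gains, so the bookkeeping must be arranged to realize whichever of the two is the bottleneck. (2) Given an $n$-dimensional $Z$, truncate to blocks $j=0,\dots,M-1$ and coordinates $k=1,\dots,N$ with $M,N$ chosen comparable to $n$ (say $MN\sim n$ or each $\sim n$); the tails contribute an arbitrarily small error by density, so WLOG $Z\subset\ell^{q_0}_M(\ell^{p_0}_N)$. (3) Inside this finite-dimensional host, use that $\dim Z=n$ forces $Z$ to meet the kernel of any coordinate projection onto fewer than $n$ coordinates; choosing such projections greedily lets me produce a unit vector $x\in Z$ that vanishes on the "largest" $O(n)$ coordinates, hence is spread over $\gtrsim n$ coordinates in the sense needed for step (1). (4) Apply step (1) block-by-block and then across blocks to get $\|\iota x\|_{\ell^{q_1}(\ell^{p_1})}\lesssim n^{-\min(\frac1{p_0},\frac1{q_0})(1-\max(\frac{q_0}{q_1},\frac{p_0}{p_1}))}$; take the supremum over $Z$ and let the truncation error go to zero.

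**Main obstacle.** The delicate point is Step (3)–(4): interleaving the "horizontal" spreading (over blocks $j$, governing the $q$-gain) with the "vertical" spreading (over coordinates $k$ within a block, governing the $p$-gain), because a vector in $Z$ cannot be made simultaneously flat in both directions for free — one has to argue that $Z$, having dimension $n$, can be forced flat in whichever direction currently dominates, and that this suffices because the exponent only asks for the \emph{better} of the two rates via the $\max$ and $\min$. I expect the cleanest route is to first reduce to the two "pure" cases by comparing $\ell^{q_0}(\ell^{p_0})$ with the intermediate spaces $\ell^{q_0}(\ell^{p_1})$ and $\ell^{q_1}(\ell^{p_0})$ (using Remark~\ref{Rmk::LambdaX}-type normalized inclusions and Lemma~\ref{Lem::DiagThm} where a genuine diagonal structure is present), so that in each pure case only one direction needs the dimension-counting argument; then compose. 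The remaining routine work — choosing $M,N$, checking the perturbation is absorbed by the quasi-norm constant $2^{\max(1/p-1,0)}$, and summing the block estimates — I would leave to the formal proof. Finally, $b_n(\iota)\to0$ is immediate from the displayed bound since all the exponents $1/p_0,1/q_0>0$ and $\max(q_0/q_1,p_0/p_1)<1$.
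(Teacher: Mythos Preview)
Your Step~(3) has a genuine gap. Knowing that $\dim Z=n$ lets you force a nonzero $x\in Z$ to vanish on any $n-1$ \emph{prescribed} coordinates, but this does not make $x$ spread out: nothing prevents the resulting $x$ from concentrating all its mass on a single remaining coordinate. The phrase ``vanishes on the `largest' $O(n)$ coordinates'' is circular --- which coordinates are largest depends on $x$, while $x$ depends on which coordinates you chose to kill --- and the greedy procedure you sketch does not break this circularity. Your fallback of reducing to ``pure cases'' via intermediate spaces and Lemma~\ref{Lem::DiagThm} also does not deliver the bound: in the factorization $\ell^{q_0}(\ell^{p_0})\hookrightarrow\ell^{q_1}(\ell^{p_0})\hookrightarrow\ell^{q_1}(\ell^{p_1})$ the second arrow is not even strictly singular (Lemma~\ref{Lem::TrivialNSS} with $T_j=\id$ and $q=q_1$), and in $\ell^{q_0}(\ell^{p_0})\hookrightarrow\ell^{q_0}(\ell^{p_1})\hookrightarrow\ell^{q_1}(\ell^{p_1})$ the first arrow fails for the same reason; moreover Lemma~\ref{Lem::DiagThm} as stated yields no quantitative rate. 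The truncation in Step~(2) is an additional, unnecessary complication.

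The paper's argument is far shorter and sidesteps all of this with two ingredients. First, H\"older's inequality gives the interpolation estimate
\[
\|x\|_{\ell^{q_1}(\ell^{p_1})}\le\|x\|_{\ell^{q_0}(\ell^{p_0})}^{\theta}\,\|x\|_{c_0}^{1-\theta},\qquad\theta=\max\Big(\tfrac{q_0}{q_1},\tfrac{p_0}{p_1}\Big).
\]
Second --- and this is the combinatorial input that replaces your Step~(3) --- a lemma of Plichko asserts that every $n$-dimensional subspace of $c_0$ contains a vector whose supremum is attained at at least $n$ entries. For such an $x$ normalized so that $\|x\|_{\ell^{q_0}(\ell^{p_0})}=1$ one has $1\ge\|x\|_{\ell^{\max(p_0,q_0)}}\ge n^{1/\max(p_0,q_0)}\|x\|_{c_0}$, hence $\|x\|_{c_0}\le n^{-\min(1/p_0,1/q_0)}$, and substituting into the interpolation estimate gives the stated bound immediately. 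No truncation, no block-by-block bookkeeping, and no separation into horizontal versus vertical spreading is needed.
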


		
	
	\begin{proof}

		The H\"older's inequality yields $\|x\|_{\ell^{q/\theta}(\ell^{p/\theta})}\le\|x\|_{\ell^q(\ell^p)}^\theta\|x\|_{\ell^\infty}^{1-\theta}$ for all $0<p,q\le\infty$ and $0<\theta<1$. Therefore,
		\begin{equation}\label{Eqn::SeqEmbed1::TmpInterpIneqn}
			\|x\|_{\ell^{q_1}(\ell^{p_1})}\le \|x\|_{\ell^{q_0\cdot\min(\frac{q_1}{q_0},\frac{p_1}{p_0})}(\ell^{p_0\cdot\min(\frac{q_1}{q_0},\frac{p_1}{p_0})})}\le\|x\|_{\ell^{q_0}(\ell^{p_0})}^{\max(\frac{q_0}{q_1},\frac{p_0}{p_1})}\|x\|_{c_0}^{1-\max(\frac{q_0}{q_1},\frac{p_0}{p_1})}.
		\end{equation}
		
		Recall from e.g. \cite[Lemma~4]{PlichkoStrictlySingular}, that if $V\subset c_0(\mathbb N_0)$ is a linear subspace of dimension $\ge n$, then there is a $x=(x_j)_{j=0}^\infty\in V$ such that $\max_{j\ge0}|x_j|$ is attained in at least $n$ points.
		
		Let $V\subset\ell^{q_0}(\ell^{p_0})$ be a given $n$ dimensional subspace. Therefore, there is a nonzero $x=(x_{j,k})_{j,k=0}^\infty$ such that $\max_{j,k\ge0}|x_{j,k}|$ is attained in at least $n$ places. Suppose $\|x\|_{\ell^{q_0}(\ell^{p_0})}=1$, then we have
		\begin{equation*}
			1=\|x\|_{\ell^{q_0}(\ell^{p_0})}\ge\|x\|_{\ell^{\max(p_0,q_0)}}\ge (n\|x\|_{c_0}^{\max(p_0,q_0)})^{\max(p_0,q_0)^{-1}}\quad\Rightarrow\quad \|x\|_{c_0}\le n^{-\max(p_0,q_0)^{-1}}.
		\end{equation*}
		
		Using \eqref{Eqn::SeqEmbed1::TmpInterpIneqn} we conclude that
		\begin{equation*}
			b_n\big(\ell^{q_0}(\ell^{p_0})\hookrightarrow\ell^{q_1}(\ell^{p_1})\big)\le n^{-\max(p_0,q_0)^{-1}(1-\max(\frac{q_0}{q_1},\frac{p_0}{p_1}))}.
		\end{equation*}
		This completes the proof.
	\end{proof}
	\begin{cor}\label{Cor::SeqEmbed1}
		Let $s\ge t$, $0<p_0<p_1\le\infty$ and $0<q_0,q_1\le\infty$ such that either $s>t$ or $q_0<q_1$. Then the following embedding is finitely strictly singular, in particular strictly singular:
		$$\ell^{q_0}(2^{js}\cdot\ell^{p_0})_{j=0}^\infty\hookrightarrow\ell^{q_1}(2^{jt}\cdot\ell^{p_1})_{j=0}^\infty.$$
	\end{cor}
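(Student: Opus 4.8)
The plan is to deduce Corollary~\ref{Cor::SeqEmbed1} from Proposition~\ref{Prop::SeqEmbed1} by two elementary moves: conjugating away the weights $2^{js}$ and $2^{jt}$ with diagonal isometries, and, in the regime $s>t$, spending part of the smoothness surplus $s-t$ so that Proposition~\ref{Prop::SeqEmbed1} is only ever invoked on a pair of summability indices with the first strictly smaller than the second.

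First I would isolate the following auxiliary fact: for all $0<p_0<p_1\le\infty$, all $0<a<b\le\infty$, and all $\sigma,\tau\in\R$ with $\sigma\ge\tau$, the inclusion $\ell^a(2^{j\sigma}\cdot\ell^{p_0})_{j=0}^\infty\hookrightarrow\ell^b(2^{j\tau}\cdot\ell^{p_1})_{j=0}^\infty$ is finitely strictly singular. To prove it, let $D_\sigma\colon\ell^a(2^{j\sigma}\ell^{p_0})_j\to\ell^a(\ell^{p_0})$ and $D_\tau\colon\ell^b(2^{j\tau}\ell^{p_1})_j\to\ell^b(\ell^{p_1})$ be the diagonal maps $(x_j)_j\mapsto(2^{j\sigma}x_j)_j$ and $(x_j)_j\mapsto(2^{j\tau}x_j)_j$, which are isometric isomorphisms. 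Writing $\iota$ for the inclusion under consideration, the conjugate $D_\tau\circ\iota\circ D_\sigma^{-1}\colon\ell^a(\ell^{p_0})\to\ell^b(\ell^{p_1})$ sends $(w_j)_j$ to $(2^{-j(\sigma-\tau)}w_j)_j$; hence it is the composition of the inclusion $J\colon\ell^a(\ell^{p_0})\hookrightarrow\ell^b(\ell^{p_1})$ with the diagonal multiplier $(u_j)_j\mapsto(2^{-j(\sigma-\tau)}u_j)_j$ on $\ell^b(\ell^{p_1})$, the latter being bounded of norm $\le 1$ precisely because $\sigma\ge\tau$. By Proposition~\ref{Prop::SeqEmbed1} the map $J$ is finitely strictly singular, and since $b_n(ATB)\le\|A\|\,\|B\|\,b_n(T)$ for bounded $A,B$, the conjugate of $\iota$ --- and therefore $\iota$ itself --- is finitely strictly singular.

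Granting this, the corollary splits into two cases. If $q_0<q_1$ (recall $s\ge t$), it is exactly the auxiliary fact with $(a,b,\sigma,\tau)=(q_0,q_1,s,t)$. If instead $s>t$, with $q_0,q_1\in(0,\infty]$ arbitrary, I fix exponents $t<t'<t''<s$ and factor the inclusion as
\[
\ell^{q_0}(2^{js}\ell^{p_0})_j\ \hookrightarrow\ \ell^{1}(2^{jt''}\ell^{p_0})_j\ \hookrightarrow\ \ell^{\infty}(2^{jt'}\ell^{p_1})_j\ \hookrightarrow\ \ell^{q_1}(2^{jt}\ell^{p_1})_j.
\]
The first and third inclusions are bounded solely because of the strict gap in the weight exponent ($s>t''$ and $t'>t$ respectively): in general, $\ell^{\alpha}(2^{j\rho}X)_j\hookrightarrow\ell^{\beta}(2^{j\eta}X)_j$ is bounded for \emph{all} $0<\alpha,\beta\le\infty$ as soon as $\rho>\eta$, since $2^{j\eta}\|x_j\|_X\le 2^{-j(\rho-\eta)}\sup_i 2^{i\rho}\|x_i\|_X$ and $(2^{-j(\rho-\eta)})_j\in\ell^{\beta}$. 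The middle inclusion is finitely strictly singular by the auxiliary fact with $(a,b,\sigma,\tau)=(1,\infty,t'',t')$. Composing the three maps, the inclusion in the corollary is finitely strictly singular, hence strictly singular.

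I expect the only genuinely delicate point to be the regime $s>t$: there Proposition~\ref{Prop::SeqEmbed1} cannot be applied to the pair $(q_0,q_1)$ directly, because it requires $q_0<q_1$, whereas $q_0\ge q_1$ is allowed here. The remedy is to use the surplus $s-t$ twice --- first to gain summability by passing to $\ell^{1}$, then to absorb the subsequent loss of summability when leaving $\ell^{\infty}$ --- so that Proposition~\ref{Prop::SeqEmbed1} is invoked only on the pair $(1,\infty)$. Everything else is routine bookkeeping with the diagonal isometries and the operator-ideal estimate $b_n(ATB)\le\|A\|\,\|B\|\,b_n(T)$.
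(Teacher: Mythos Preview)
Your proof is correct and follows essentially the same route as the paper's: both conjugate away the weights via diagonal isometries to reduce to Proposition~\ref{Prop::SeqEmbed1}, and both handle the $s>t$ regime by spending the surplus $s-t$ to pass through intermediate spaces with a strictly smaller outer exponent (the paper uses a single intermediate space $\ell^{\min(q_1/2,1)}(2^{jt}\ell^{p_0})$, whereas you use two, with outer exponents $1$ and $\infty$ and intermediate weights $t''>t'$). The differences are cosmetic.
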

	\begin{proof}
		First we consider the case $q_0<q_1$ and  $s=t$. 
		Notice that the map $J_s(x_j)_{j=0}^\infty:=(2^{js}x_j)_{j=0}^\infty$ defines an isomorphism $J_s:\ell^q(2^{js}\cdot\ell^p)_{j=0}^\infty\to\ell^q(\ell^p)$ for every $0<p,q\le\infty$. Therefore $\ell^{q_0}(2^{js}\cdot\ell^{p_0})_{j=0}^\infty\hookrightarrow\ell^{q_1}(2^{js}\cdot\ell^{p_1})_{j=0}^\infty$ and $\ell^{q_0}(\ell^{p_0})\hookrightarrow\ell^{q_1}(\ell^{p_1})$ share the same Bernstein numbers. 
		The finitely strictly singularity follows immediately from Proposition~\ref{Prop::SeqEmbed1}.
		
		When $s>t$, we can decompose $\ell^{q_0}(2^{js}\cdot\ell^{p_0})_{j=0}^\infty\hookrightarrow\ell^{q_1}(2^{jt}\cdot\ell^{p_1})_{j=0}^\infty$ into
		\begin{equation*}
			\ell^{q_0}(2^{js}\cdot\ell^{p_0})_{j=0}^\infty \hookrightarrow 
			\ell^{\min(\frac{q_1}2,1)}(2^{jt}\cdot\ell^{p_0})_{j=0}^\infty\hookrightarrow\ell^{q_1}(2^{jt}\cdot\ell^{p_1})_{j=0}^\infty.
		\end{equation*}
		Here $\ell^{q_0}(2^{js}\cdot\ell^{p_0})_{j=0}^\infty\hookrightarrow\ell^{\min(q_1/2,1)}(2^{jt}\cdot\ell^{p_0})_{j=0}^\infty$ is bounded because by H\"older's inequality:
		\begin{align*}
			\|x\|_{\ell^{\min(q_1/2,1)}(2^{jt}\cdot\ell^{p_0})_{j=0}^\infty}^{\min(q_1/2,1)}=&\sum_{j=0}^\infty(2^{jt}\|x_j\|_{\ell^{p_0}})^{\min(q_1/2,1)}\le\sup_{j\ge0}(2^{js}\|x_j\|_{\ell^{p_0}})^{\min(q_1/2,1)}\sum_{k=0}^\infty 2^{-k(s-t)\cdot \min(q_1/2,1)}
			\\\lesssim&_{s,t,q_1}\|x\|_{\ell^\infty(2^{js}\cdot\ell^{p_0})_{j=0}^\infty}^{\min(q_1/2,1)}\le \|x\|_{\ell^{q_0}(2^{js}\cdot\ell^{p_0})_{j=0}^\infty}^{\min(q_1/2,1)}.
		\end{align*}
		
		The result then follows immediately since $\ell^{\min(\frac{q_1}2,1)}(2^{jt}\cdot\ell^{p_0})_{j=0}^\infty\hookrightarrow\ell^{q_1}(2^{jt}\cdot\ell^{p_1})_{j=0}^\infty$ is already known to be finitely strictly singular.
	\end{proof}
	
	The key to prove Theorem~\ref{Thm::ClassifyDom}~\ref{Item::ClassifyDom::SS} is the following:
	\begin{prop}\label{Prop::SeqEmbed2}
		Let $0<q_0<q_1\le\infty$. Let $X$ be a fixed quasi-Banach space and let $X_\bullet=(X_j)_{j=0}^\infty\subset X$ be a sequence of finite dimensional subspaces with the norm induced from $X$.
		
		Then the following embedding is strictly singular:
		\begin{equation}\label{Eqn::SeqEmbed2SS}
			\ell^{q_0}(X_j)_{j=0}^\infty\hookrightarrow\ell^{q_1}(X_j)_{j=0}^\infty.
		\end{equation}
	\end{prop}
	When $\sup_j\dim X_j=\infty$ and $q_1<\infty$ the map is not finitely strictly singular. For the special case $X_j=\ell^p\{1,\dots,2^j\}$, see Proposition~\ref{Prop::SeqEmbed2NotFSS} below.
	\begin{proof}
		
		Let $\eps>0$ and $V\subset\ell^{q_0}(X_j)_{j=0}^\infty$ be an infinite dimensional subspace. We need to find a non-zero $x=(x_j)_{j=0}^\infty\in V$ such that $\|x\|_{\ell^{q_1}(X)}<\eps \|x\|_{\ell^{q_0}(X)}$.
		
		Let $N>1$ be such that $N^{\frac1{q_1}-\frac1{q_0}}<\frac\eps2$. Let $\delta=\delta_N\in(0,1)$ be such that 
		\begin{equation}\label{Eqn::SeqEmbed2::Assump}
			\frac{(1+(N+1)\delta^{\frac{\min(1,q_1)}{q_0}})^{\frac{q_1}{\min(1,q_1)}}}{(1-(N+1)\delta^{\min(1,\frac1{q_0})})^{\frac{q_0}{\min(1,q_0)}}}<2.
		\end{equation}
		Such a $\delta$ always exists since if we let $\delta\to0$ then the left hand side tends to $1$ which is less than $2$.
		
		Since $q_0<\infty$ and $X_j$ are all finite dimensional, we can find out a sequence $(j_n)_{n=1}^N\subset\N_0$ and a $(x^{(n)})_{n=1}^N\subset V$ with $\|x^{(n)}\|_{\ell^{q_0}(X)}=1$ for each $n\ge1$, such that
		\begin{itemize}
			\item $0\le j_1<j_2<\dots<j_N$;
			\item  $x^{(n)}_j\equiv0$ for every $0\le j\le j_{n-1}$ (here we set $j_0=-1$);
			\item $\sum_{j=j_n+1}^\infty\|x^{(n)}_j\|_{X_j}^{q_0}\le\delta$, or equivalently, $\sum_{j=j_{n-1}+1}^{j_n}\|x^{(n)}_j\|_{X_j}^{q_0}\ge1-\delta$.
		\end{itemize}
		
		The construction is done by induction. We start with a unit vector $x^{(1)}\in V$, since $q_0<\infty$ there is a $j_1\ge1$ such that $\sum_{j=j_1+1}^\infty\|x^{(1)}_j\|_{X_j}^{q_0}\le\delta$.
		
		Suppose for $n\ge2$ and we have obtained $x^{(1)},\dots,x^{(n-1)}$ and $j_1<\dots<j_{n-1}$. Consider the projection $\Pc_{n-1}:\ell^{q_0}(\N_0;X)\twoheadrightarrow \ell^{q_0}(\{0,\dots,j_{n-1}\};X)$, we have $\operatorname{ker} (\Pc_{n-1}|_V)\neq\{0\}$ since $\dim \Pc_{n-1}(V)=\sum_{j=0}^{j_{n-1}}\dim X_j<\infty$ but $\dim V=\infty$. Therefore, there must be a unit vector $x^{(n)}\in V$ such that $\Pc_{n-1}x^{(n)}=0$, i.e. $x^{(n)}_j\equiv0$ for every $0\le j\le j_{n-1}$. Take a $j_n>j_{n-1}$ be such that $\sum_{j=j_n+1}^\infty\|x^{(n)}_j\|_X^{q_0}\le\delta$ we close the induction.
		
		Now we see that $x:=x^{(1)}+\dots+x^{(N)}\in V$ is nonzero since $$\|x\|_{\ell^{q_0}(X)}\ge\|x\|_{\ell^{q_0}(\{1,\dots,j_1\};X)}=\|x^{(1)}\|_{\ell^{q_0}(\{1,\dots,j_1\};X)}\ge(1-\delta)^{1/q_0}>0.$$ We are going to prove that $x$ is what we needed.
		
		Note that for every $a,b,q>0$, we have $(a+b)^{\min(1,q)}\le a^{\min(1,q)}+b^{\min(1,q)}$. Therefore for each $1\le n\le N$,
		\begin{align*}
			\Big\|\sum_{k=1}^Nx^{(k)}\Big\|_{\ell^{q_0}(\{j_{n-1}+1,\dots,j_n\};X)}^{\min(1,q_0)} & = \Big\|\sum_{k=1}^nx^{(k)}\Big\|_{\ell^{q_0}(\{j_{n-1}+1,\dots,j_n\};X)}^{\min(1,q_0)}
			\\& \ge\|x^{(n)}\Big\|_{\ell^{q_0}(\{j_{n-1}+1,\dots,j_n\};X)}^{\min(1,q_0)}-\Big\|\sum_{k=1}^{n-1}x^{(k)}\Big\|_{\ell^{q_0}(\{j_{n-1}+1,\dots,j_n\};X)}^{\min(1,q_0)}
			\\
			&\ge\|x^{(n)}\Big\|_{\ell^{q_0}(\{j_{n-1}+1,\dots,j_n\};X)}^{\min(1,q_0)}-\sum_{k=1}^{n-1}\|x^{(k)}\|_{\ell^{q_0}(\{j_{k}+1,j_k+2,\dots\};X)}^{\min(1,q_0)}
			\\
			& \ge (1-\delta)^{\frac{\min(1,q_0)}{q_0}}-N\delta^{\frac{\min(1,q_0)}{q_0}}\ge1-(N+1)\delta^{\min(1,\frac1{q_0})}.
		\end{align*}
		
		Taking the sum over $1 \leq n \leq N$, we see that
		\begin{align*}
			\Big\|\sum_{k=1}^Nx^{(k)}\Big\|_{\ell^{q_0}(X)}^{q_0}\ge\sum_{n=1}^N\Big\|\sum_{k=1}^Nx^{(k)}\Big\|_{\ell^{q_0}(\{j_{n-1}+1,\dots,j_n\};X)}^{q_0}
			\ge N(1-(N+1)\delta^{\min(1,q_0^{-1})})^{\frac{q_0}{\min(1,q_0)}}.
		\end{align*}
		
		On the other hand, using the property $\|x\|_{\ell^{q_1}(S;X)}\le\|x\|_{\ell^{q_0}(S;X)}$ for every $x\in\ell^{q_0}(X)$ and set $S$, we have, for each $1 \leq n \leq N$,
		\begin{align*}
			\Big\|\sum_{k=1}^Nx^{(k)}\Big\|_{\ell^{q_1}(\{j_{n-1}+1,\dots,j_n\};X)}^{\min(1,q_1)}   =  & \Big\|\sum_{k=1}^nx^{(k)}\Big\|_{\ell^{q_1}(\{j_{n-1}+1,\dots,j_n\};X)}^{\min(1,q_1)}
			\\ \le \|x^{(n)}\Big\|_{\ell^{q_1}(\{j_{n-1}+1,\dots,j_n\};X)}^{\min(1,q_1)}&+\Big\|\sum_{k=1}^{n-1}x^{(k)}\Big\|_{\ell^{q_1}(\{j_{n-1}+1,\dots,j_n\};X)}^{\min(1,q_1)}
			\\
			\le\|x^{(n)}\Big\|_{\ell^{q_0}(\{j_{n-1}+1,\dots,j_n\};X)}^{\min(1,q_1)}&+\sum_{k=1}^{n-1}\|x^{(k)}\|_{\ell^{q_1}(\{j_{k}+1,j_k+2,\dots\};X)}^{\min(1,q_1)}
			\le1+N\delta^{\frac{\min(1,q_1)}{q_0}}.
		\end{align*}
		Similarly
		\begin{align*}
			&\Big\|\sum_{k=1}^Nx^{(k)}\Big\|_{\ell^{q_1}(\{j_N+1,j_N+2,\dots\};X)}^{\min(1,q_1)}\le\sum_{k=1}^N\|x^{(k)}\|_{\ell^{q_1}(\{j_N+1,j_N+2,\dots\};X)}^{\min(1,q_1)}\le \delta^{\frac{\min(1,q_1)}{q_0}}N.
		\end{align*}
		
		Therefore, using the convention $j_{N+1}=\infty$,
		\begin{align*}
			\Big\|\sum_{k=1}^Nx^{(k)}\Big\|_{\ell^{q_1}(X)}\le\bigg(\sum_{n=1}^{N+1}\Big\|\sum_{k=1}^Nx^{(k)}\Big\|_{\ell^{q_1}(\{j_{n-1}+1,\dots,j_n\};X)}^{q_1}\bigg)^{1/q_1}
			\le N^{1/q_1}(1+(N+1)\delta^{\frac{\min(1,q_1)}{q_0}})^{\frac1{\min(1,q_1)}}.
		\end{align*}
		
		Plugging in \eqref{Eqn::SeqEmbed2::Assump} we conclude that
		\begin{equation*}
			\frac{\|x^{(1)}+\dots+x^{(N)}\|_{\ell^{q_1}(X)}}{\|x^{(1)}+\dots+x^{(N)}\|_{\ell^{q_0}(X)}}\le N^{\frac1{q_1}-\frac1{q_0}}\frac{(1+(N+1)\delta^{\frac{\min(1,q_1)}{q_0}})^{\frac{q_1}{\min(1,q_1)}}}{(1-(N+1)\delta^{\min(1,\frac1{q_0})})^{\frac{q_0}{\min(1,q_0)}}}<\frac\eps 2\cdot2=\eps.
		\end{equation*}
		
		Therefore $x=x^{(1)}+\dots+x^{(N)}\in V$ is what we needed to achieve $\|x\|_{\ell^{q_1}(\ell^p)}<\eps \|x\|_{\ell^{q_0}(\ell^p)}$.
	\end{proof}
	
	The following statement generalizes Proposition~\ref{Prop::SeqEmbed2}. For convenience, we only prove the case $m_j=2^j$.
	
	\begin{prop}\label{Prop::SeqEmbed2NotFSS}
		Let $0<p_1\le p_0\le\infty$ and $0<q_0<q_1\le\infty$ such that $(p_0,p_1)\notin\{\infty\}\times(0,\infty)$. 
		
		Then for every $A\ge0$, the embedding is not finitely strictly singular:
		\begin{equation} \label{Eqn::SeqEmbed2NotFSS} 
			\ell^{q_0}(2^{-\frac j{p_0}}\cdot\ell^{p_0}\{1,\dots,2^j\})_{j=A}^\infty\hookrightarrow\ell^{q_1}(2^{-\frac j{p_1}}\cdot\ell^{p_1}\{1,\dots,2^j\})_{j=A}^\infty.
		\end{equation}
	\end{prop}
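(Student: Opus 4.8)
My plan is to show that the Bernstein numbers of the embedding \eqref{Eqn::SeqEmbed2NotFSS} do not tend to $0$ by exhibiting, for every large $j$, a subspace of dimension $d_j\to\infty$ on which the embedding is bounded below by a fixed constant $c>0$. I would work inside a single block: if $Z_j$ is a subspace of the $j$-th block $2^{-j/p_0}\cdot\ell^{p_0}\{1,\dots,2^j\}$ and $j\ge A$, then $Z_j\otimes e_j$ is a subspace of the source space of \eqref{Eqn::SeqEmbed2NotFSS} of the same dimension on which both vector-valued quasinorms reduce to the corresponding single-block quasinorms; hence the embedding restricted to $Z_j\otimes e_j$ inherits exactly the lower bound of the block inclusion $2^{-j/p_0}\cdot\ell^{p_0}\{1,\dots,2^j\}\hookrightarrow 2^{-j/p_1}\cdot\ell^{p_1}\{1,\dots,2^j\}$ restricted to $Z_j$. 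Once I have such $Z_j$ with $\dim Z_j=d_j\to\infty$ and uniform lower bound $c$, I get $b_{d_j}\ge c$ for all $j$, and since $n\mapsto b_n$ is non-increasing this forces $b_n\ge c$ for every $n$, so $\lim_n b_n\ge c>0$ and the embedding is not finitely strictly singular.

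By Remark~\ref{Rmk::LambdaX}, the block inclusion is precisely $L^{p_0}(\mu_j)\hookrightarrow L^{p_1}(\mu_j)$, where $\mu_j$ is the uniform probability measure on $\{1,\dots,2^j\}$ (so its operator norm is $1$). Thus the task reduces to finding, inside $L^{p_0}(\mu_j)$, a subspace $Z_j$ with $\dim Z_j\to\infty$ and $\|f\|_{L^{p_1}(\mu_j)}\ge c\,\|f\|_{L^{p_0}(\mu_j)}$ for all $f\in Z_j$, with $c$ independent of $j$. If $p_0=\infty$, then by the standing hypothesis $(p_0,p_1)\notin\{\infty\}\times(0,\infty)$ we also have $p_1=\infty$, the block map is the identity, and I simply take $Z_j$ to be the whole block (dimension $2^j$, $c=1$). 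If $p_0<\infty$, I would identify $\{1,\dots,2^j\}$ with $\{-1,1\}^j$ carrying $\mu_j$ and take $Z_j=\Span\{r_1,\dots,r_j\}$, where $r_1,\dots,r_j\colon\{-1,1\}^j\to\{-1,1\}$ are the coordinate (Rademacher) functions; then $\dim Z_j=j\to\infty$, and Khintchine's inequality (valid for all $0<p<\infty$, with constants $0<A_p\le B_p<\infty$ not depending on $j$) gives, for $f=\sum_i a_i r_i$, the bound $\|f\|_{L^{p_1}(\mu_j)}\ge A_{p_1}\big(\sum_i|a_i|^2\big)^{1/2}\ge (A_{p_1}/B_{p_0})\,\|f\|_{L^{p_0}(\mu_j)}$, so $c=A_{p_1}/B_{p_0}$ works.

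The main obstacle is precisely this construction of a subspace of unbounded dimension, sitting inside a single block, on which the $L^{p_0}$- and $L^{p_1}$-quasinorms are comparable --- the content of the proposition is that such subspaces exist even though the global embedding is strictly singular (Proposition~\ref{Prop::SeqEmbed2}). For $p_0<\infty$ this is resolved by Khintchine's inequality applied to the Rademacher system, whose size grows like $\log_2$ of the block length $2^j$; I expect the naive alternative --- the span of indicator functions of an equipartition of the block --- to fail, since it still contains ``spike'' vectors whose $L^{p_1}/L^{p_0}$ quotient degenerates as $j\to\infty$. A minor technical point is to double-check Khintchine's inequality for $p<1$ (it still holds), and to note that the hypothesis $(p_0,p_1)\notin\{\infty\}\times(0,\infty)$ is exactly what excludes the case $p_0=\infty>p_1$, in which the embedding is in fact finitely strictly singular by Corollary~\ref{Cor::DiagCorFSS}.
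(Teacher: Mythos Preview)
Your argument is correct and essentially identical to the paper's: work in a single block $j$, so the vector-valued quasinorms reduce to the block $L^{p_0}(\mu_j)\hookrightarrow L^{p_1}(\mu_j)$ inclusion, take the whole block when the map is the identity and the span of the Rademacher functions (via Khintchine's inequality) when $p_0<\infty$, then read off a uniform lower bound on the Bernstein numbers. The only cosmetic difference is that the paper splits into the cases $p_0=p_1$ (whole block, dimension $2^j$) and $p_1<p_0<\infty$ (Rademacher, dimension $j$), while you split into $p_0=\infty$ and $p_0<\infty$; your split handles $p_0=p_1<\infty$ via Rademacher instead of the trivial whole-block argument, but this is harmless.
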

	\begin{proof}
		When $p_0=p_1$, we can take the $2^n$ dimensional subspace $V_n:=2^{-n/{p_0}}\cdot\ell^{p_0}\{1,\dots,2^n\}\hookrightarrow\ell^{q_0}(2^{-j/{p_0}}\cdot\ell^{p_0}\{1,\dots,2^j\})_{j=0}^\infty$, where $n\ge1$. The embedding \eqref{Eqn::SeqEmbed2NotFSS} restricted to $V_n$ is the identity map, hence the Bernstein numbers are all bounded from below by $1$, which does not tend to $0$.
		
		In the following we consider $0<p_0<p_1<\infty$. For each $n\ge1$ let $S_n:=(\{-1,1\}^n,\mu_n)$ be the probability space where $\mu_n\{a\}=2^{-n}$ for all $a\in\{-1,1\}^n$. For $1\le j\le n$, let $r_{n,j}:S_n\to\{-1,1\}$ be the standard projection on the $j$-th coordinate, i.e. $r_{n,j}(\epsilon_1,\dots,\epsilon_n)=\epsilon_j$. The standard Khintchine inequality yields $\|\sum_{j=1}^na_jr_{n,j}\|_{L^p(S_n,\mu_n)}\approx_p\|a\|_{\ell^2\{1,\dots,n\}}$ where the implied constant depends only on $p\in(0,\infty)$ but not on $n$. In particular, there is a $C_{p_0p_1}>0$ such that for every $n\ge1$ and $(a_j)_{j=1}^n\subset\R$,
		\begin{equation}\label{Eqn::KHIneqn}
			\Big\|\sum_{j=1}^na_jr_{n,j}\Big\|_{L^{p_1}(S_n,\mu_n)}\le \Big\|\sum_{j=1}^na_jr_{n,j}\Big\|_{L^{p_0}(S_n,\mu_n)}\le C_{p_0p_1}\Big\|\sum_{j=1}^na_jr_{n,j}\Big\|_{L^{p_1}(S_n,\mu_n)}.
		\end{equation}
		
		Since $(S_n,\mu_n)$ and $(\{1,\dots,2^n\},2^{-n}\cdot\#)$ are both probability spaces with uniform distributions on their sample sets, we see that they are isomorphic by taking any bijection $\Gamma_n :\{1,\dots,2^n\}\to S_n$ between sets. Thus we have elements $r_{n,j}\circ\Gamma_n\in\ell^{p_1}\{1,\dots,2^n\}$.
		
		Now take $X=\Span(r_{n,j}\circ\Gamma_n)_{j=1}^n$, we have $\dim X=n$. By \eqref{Eqn::KHIneqn} we get, for every $(a_j)_{j=1}^n\subset\R$,
		\begin{equation*}
			\Big\|\sum_{j=1}^na_j\cdot(r_{n,j}\circ\Gamma_n)\Big\|_{2^{-\frac n{p_0}}\cdot\ell^{p_0}}\le C_{p_0p_1}\Big\|\sum_{j=1}^na_j\cdot(r_{n,j}\circ\Gamma_n)\Big\|_{2^{-\frac n{p_1}}\cdot\ell^{p_1}}.
		\end{equation*}
		
		We conclude that $b_n\big(\ell^{q_0}(2^{-\frac j{p_0}}\cdot\ell^{p_0}\{1,\dots,2^j\})_{j=A}^\infty\hookrightarrow\ell^{q_1}(2^{-\frac j{p_1}}\cdot\ell^{p_1}\{1,\dots,2^j\})_{j=A}^\infty\big)\ge C_{p_0p_1}^{-1}$, which does not go to zero as $n\to\infty$.
	\end{proof}
	
	\begin{cor}\label{Cor::SeqEmbed2}
		Let $n,A\ge1$, $0<p_1\le p_0\le\infty$ and $0<q_0\le q_1\le\infty$. Consider the embedding \begin{equation}\label{Eqn::SeqEmbed2::Eqn}
			\ell^{q_0}(2^{j(s-n/p_0)}\cdot\ell^{p_0}\{1,\dots,2^{nj}\})_{j=A}^\infty\hookrightarrow\ell^{q_1}(2^{j(s-n/p_1)}\cdot\ell^{p_1}\{1,\dots,2^{nj}\})_{j=A}^\infty.
		\end{equation}
		\begin{enumerate}[(i)]
			\item\label{Item::SeqEmbed2::FSS} Suppose $q_0<q_1$ and $p_1<p_0=\infty$, then the embedding is finitely strictly singular.
			\item\label{Item::SeqEmbed2::SS} Suppose $q_0<q_1$, and either $p_1\le p_0<\infty$ or $p_0=p_1=\infty$, then the embedding is strictly singular but not finitely strictly singular.
			\item\label{Item::SeqEmbed2::NSS} Suppose $q_0=q_1$, then the embedding is not strictly singular.
		\end{enumerate}
	\end{cor}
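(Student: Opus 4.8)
The plan is to strip away the weight $2^{js}$ and the $\ell^{p}$-normalization so that \eqref{Eqn::SeqEmbed2::Eqn} turns into one of the model maps already analyzed, and then transport the conclusions back. By Remark~\ref{Rmk::LambdaX} we have, for each $j$, $2^{j(s-n/p)}\cdot\ell^{p}\{1,\dots,2^{nj}\}=2^{js}\cdot L^{p}(\mu_{nj})$, where $\mu_{nj}$ is the uniform probability measure on $2^{nj}$ points, and the diagonal map $(x_j)_j\mapsto(2^{js}x_j)_j$ is an isometric isomorphism $\ell^{q}(2^{js}L^{p}(\mu_{nj}))_j\xrightarrow{\simeq}\ell^{q}(L^{p}(\mu_{nj}))_j$. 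Since isometric isomorphisms on source and target leave all Bernstein numbers unchanged, it suffices to analyze
\[
\iota\colon\quad \ell^{q_0}\big(L^{p_0}(\mu_{nj})\big)_{j=A}^{\infty}\ \longrightarrow\ \ell^{q_1}\big(L^{p_1}(\mu_{nj})\big)_{j=A}^{\infty},
\]
which is bounded with norm $\le 1$ because $p_1\le p_0$, $q_0\le q_1$ and each $\mu_{nj}$ is a probability measure (the reindexing $j\ge A$ versus $j\ge 0$ being harmless throughout). For (iii), the $j$-th coordinate of $\iota$ is the norm-$1$ inclusion $L^{p_0}(\mu_{nj})\hookrightarrow L^{p_1}(\mu_{nj})$, so $\limsup_j$ of the coordinate norms equals $1\in(0,\infty)$, and Lemma~\ref{Lem::TrivialNSS} gives that $\iota$ is not strictly singular.

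For (i), where $p_1<p_0=\infty$, I would pass to function spaces on $[0,1]$: identifying a function on $\{1,\dots,2^{nj}\}$ with the corresponding step function on $[0,1]$ yields an isometric embedding $L^{r}(\mu_{nj})\hookrightarrow L^{r}[0,1]$ for every $r\in(0,\infty]$. These induce isometric embeddings $I$ and $K$ fitting into a commuting square $K\circ\iota=S\circ I$, where $S$ is the inclusion $\ell^{q_0}(L^{\infty}[0,1])\hookrightarrow\ell^{q_1}(L^{p_1}[0,1])$, which is finitely strictly singular by Corollary~\ref{Cor::DiagCorFSS} (applicable since $0<p_1<\infty$ and $q_0<q_1$). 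Then $b_m(\iota)=b_m(K\circ\iota)=b_m(S\circ I)\le b_m(S)\to 0$, using that composing with an isometric embedding on the target preserves Bernstein numbers while precomposing with one (a restriction to a subspace) cannot increase them; hence $\iota$, and with it \eqref{Eqn::SeqEmbed2::Eqn}, is finitely strictly singular.

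For the strict-singularity part of (ii): if $p_0=p_1=p$, undoing the normalization once more (another isometric isomorphism, Remark~\ref{Rmk::LambdaX}) turns $\iota$ into $\ell^{q_0}(\ell^{p}\{1,\dots,2^{nj}\})_{j\ge A}\hookrightarrow\ell^{q_1}(\ell^{p}\{1,\dots,2^{nj}\})_{j\ge A}$, which is strictly singular by Proposition~\ref{Prop::SeqEmbed2} with $m_j:=2^{n(j+A)}$; if $p_1<p_0<\infty$, I would factor $\iota$ through $\ell^{q_0}(L^{p_1}(\mu_{nj}))_{j\ge A}$, the first leg being bounded because $p_1<p_0$ and the $\mu_{nj}$ are probability measures, the second leg being strictly singular by the case just done, so that $\iota$ is strictly singular as a composite with a strictly singular map.

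For the failure of finite strict singularity in (ii), note that restriction to one coordinate block can only decrease Bernstein numbers, so a lower bound has to be produced directly inside a single block $L^{p_0}(\mu_{nj})$ rather than deduced from Proposition~\ref{Prop::SeqEmbed2NotFSS} as a black box. When $p_0=p_1$, the $j$-th coordinate of $\iota$ is the identity of $L^{p}(\mu_{nj})$, so that whole block (of dimension $2^{nj}\to\infty$) witnesses $b_m(\iota)\ge 1$ for all $m$. When $p_1<p_0<\infty$, I would rerun the Khintchine argument from the proof of Proposition~\ref{Prop::SeqEmbed2NotFSS}, now inside $L^{p_0}(\mu_{nj})$: transporting the first $nj$ Rademacher functions gives an $nj$-dimensional subspace $X_j\subset L^{p_0}(\mu_{nj})$ with $\|y\|_{L^{p_0}(\mu_{nj})}\le C_{p_0p_1}\|y\|_{L^{p_1}(\mu_{nj})}$ for $y\in X_j$, hence $b_{nj}(\iota)\ge C_{p_0p_1}^{-1}$ for all $j$; either way $b_m(\iota)\not\to 0$. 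Transporting back through the reduction then yields (i)--(iii). The one point demanding care is the bookkeeping: keeping track of which reductions preserve Bernstein numbers and in which direction compositions and restrictions move them — in particular, the not-finitely-strictly-singular part of (ii) genuinely cannot be read off from Proposition~\ref{Prop::SeqEmbed2NotFSS} by a formal reduction, precisely because restriction moves Bernstein numbers the wrong way, so that (easy) step has to be re-proved at block size $2^{nj}$.
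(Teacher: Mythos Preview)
Your proof is correct and follows essentially the same route as the paper: reduce to $s=0$, use the step-function isometry into $L^p[0,1]$ together with Corollary~\ref{Cor::DiagCorFSS} for (i), Proposition~\ref{Prop::SeqEmbed2} for the strict-singularity half of (ii), the identity-block/Khintchine construction for the not-FSS half, and a one-vector-per-block subspace (your appeal to Lemma~\ref{Lem::TrivialNSS}, the paper's explicit span of $\1_{\{1,\dots,2^{nj}\}}\otimes e_j$) for (iii). Your extra care --- the explicit factorization through $\ell^{q_0}(L^{p_1}(\mu_{nj}))$ when $p_1<p_0<\infty$, and the remark that Proposition~\ref{Prop::SeqEmbed2NotFSS} is stated for block sizes $2^j$ rather than $2^{nj}$ so its single-block construction must be rerun --- fills in details the paper leaves implicit, but the underlying argument is the same.
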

	\begin{proof}
		The map $J_s(x_j)_{j=A}^\infty:=(2^{js}x_j)_{j=A}^\infty$ defines an isomorphism $J_s:\ell^q(2^{j(s-\frac np)}\cdot\ell^p\{1,\dots,2^{nj}\})_{j=A}^\infty\to\ell^q(2^{-j\frac np}\cdot\ell^p\{1,\dots,2^{nj}\})_{j=A}^\infty$ for every $0<p,q\le\infty$. Therefore it suffices to consider the case $s=0$.
		
		\medskip
		\noindent\ref{Item::SeqEmbed2::FSS}: For $j\ge0$ and $f:\{1,\dots,2^{nj}\}\to\R$ we define $\Gamma_j f:[0,1]\to\R$ by $\Gamma_j f(x):=f(\lfloor 2^{nj}x\rfloor)$. Clearly $\Gamma_j:2^{-jn/p}\cdot\ell^p\{1,\dots,2^{nj}\}\to L^p[0,1]$ are isometric embeddings for all $0<p\le\infty$ and $j\ge0$. Therefore $(\Gamma_j)_{j=A}^\infty:\ell^q(2^{-jn/p}\cdot\ell^p\{1,\dots,2^{nj}\})_{j=A}^\infty\to \ell^q(L^p[0,1])$ is an isometry for all $0<p,q\le\infty$ and $A\ge1$.
		
		Applying Corollary~\ref{Cor::DiagCorFSS} we see that $\ell^{q_0}(\ell^\infty\{1,\dots,2^{nj}\})_{j=A}^\infty\to \ell^{q_1}(L^{p_1}[0,1])$ is finitely strictly singular. Pulling back by the topological embedding $(\Gamma_j)_{j=A}^\infty:\ell^{q_k}(2^{-jn/p_k}\{1,\dots,2^{nj}\})_{j=A}^\infty\to \ell^{q_k}(L^{p_k}[0,1])$ for $k=0,1$ (see also Lemma~\ref{Lem::SubspaceArgument}) we conclude that \eqref{Eqn::SeqEmbed2::Eqn} is finitely strictly singular.
		
		\medskip
		\noindent\ref{Item::SeqEmbed2::SS}: The non finitely strictly singular part follows from Proposition~\ref{Prop::SeqEmbed2NotFSS}. The strictly singular part follows  from applying Proposition~\ref{Prop::SeqEmbed2} to the second arrow below:
		\begin{multline*}
			\ell^{q_0}(2^{j(s-\frac n{p_0})}\cdot\ell^{p_0}\{1,\dots,2^{nj}\})_{j=A}^\infty\xrightarrow{J_{n/p_0-s}}\ell^{q_0}(\ell^{p_0}\{1,\dots,2^{nj}\})_{j=A}^\infty\xhookrightarrow{\eqref{Eqn::SeqEmbed2SS}}\ell^{q_1}(\ell^{p_0}\{1,\dots,2^{nj}\})_{j=A}^\infty
			\\
			\xrightarrow{J_{s-n/{p_0}}}\ell^{q_1}(2^{j(s-\frac n{p_0})}\cdot\ell^{p_0}\{1,\dots,2^{nj}\})_{j=A}^\infty\hookrightarrow\ell^{q_1}(2^{j(s-\frac n{p_1})}\cdot\ell^{p_1}\{1,\dots,2^{nj}\})_{j=A}^\infty.
		\end{multline*}
		\medskip
		\noindent\ref{Item::SeqEmbed2::NSS}: When $q_0=q_1$, we consider the infinite dimensional subspace $X=\Span(\1_{\{1,\dots,2^{nj}\}}\otimes e_j:j\ge A)$. Since $\|\1_{\{1,\dots,2^{nj}\}}\|_{2^{-jn/p}\cdot\ell^p}\equiv1$ for all $j\ge0$ and $0<p\le\infty$, the embedding restricted to $X$ is an isometry. Therefore, it has a bounded inverse, which proves the non strictly singularity. 
	\end{proof}

	\section{Proof of Theorems \ref{Thm::ClassifyDom} and \ref{Thm::ClassifyRn}}\label{Section::PfThm}
	
	In this section we complete the proof of Theorems~\ref{Thm::ClassifyDom} and \ref{Thm::ClassifyRn}.
	
	\begin{lem}\label{Lem::SubspaceArgument}
		Let $X_0,X_1,Y_0,Y_1$ be quasi-Banach spaces, and $\iota_0:X_0\to Y_0$ and $\iota_1:X_1\to Y_1$ be topological embeddings, i.e. $\iota_0(X_0)\subseteq Y_0$ and $\iota_1(X_1)\subseteq Y_1$ are both closed subspaces. Let $T_X:X_0\to X_1$ and $T_Y:Y_0\to Y_1$ be bounded linear maps such that $\iota_1\circ T_X=T_Y\circ\iota_0$.
		\begin{enumerate}[(i)]
			\item\label{Item::SubspaceArgument::Yes} Suppose $T_Y$ is (finitely) strictly singular. Then so is $T_X$.
			\item\label{Item::SubspaceArgument::No} Conversely suppose $T_X$ is not (finitely) strictly singular. Then so is $T_Y$.
		\end{enumerate}
	\end{lem}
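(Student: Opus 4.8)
The plan is to establish part~\ref{Item::SubspaceArgument::Yes} directly and to deduce part~\ref{Item::SubspaceArgument::No} from it by contraposition, since in each of the two cases (strict singularity, finite strict singularity) the assertion ``$T_Y$ is (finitely) strictly singular $\Rightarrow$ $T_X$ is (finitely) strictly singular'' is logically the same as ``$T_X$ is not (finitely) strictly singular $\Rightarrow$ $T_Y$ is not (finitely) strictly singular''. To set up, I would first record that since $\iota_0$ and $\iota_1$ are topological embeddings between quasi-Banach spaces, there are constants $0<c_0\le C_0$ and $0<c_1\le C_1$ with $c_0\|x\|_{X_0}\le\|\iota_0 x\|_{Y_0}\le C_0\|x\|_{X_0}$ for $x\in X_0$ and $c_1\|a\|_{X_1}\le\|\iota_1 a\|_{Y_1}\le C_1\|a\|_{X_1}$ for $a\in X_1$; in particular $\iota_0$ is injective, so for any linear subspace $Z\subseteq X_0$ its image $\iota_0(Z)\subseteq Y_0$ is a linear subspace with $\dim\iota_0(Z)=\dim Z$.

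The heart of the matter is a single transference inequality: for every subspace $Z\subseteq X_0$,
\[
\inf_{x\in Z,\ \|x\|_{X_0}=1}\|T_X x\|_{X_1}\ \le\ \frac{C_0}{c_1}\,\inf_{y\in\iota_0(Z),\ \|y\|_{Y_0}=1}\|T_Y y\|_{Y_1}.
\]
To prove it, fix a unit vector $y\in\iota_0(Z)$ and write $y=\iota_0 x$ with a unique nonzero $x\in Z$; from $1=\|y\|_{Y_0}\le C_0\|x\|_{X_0}$ we get $\|x\|_{X_0}\ge C_0^{-1}$, and from $\iota_1\circ T_X=T_Y\circ\iota_0$ we get
\[
\Big\|T_X\tfrac{x}{\|x\|_{X_0}}\Big\|_{X_1}=\frac{\|T_X x\|_{X_1}}{\|x\|_{X_0}}\le\frac{c_1^{-1}\|\iota_1 T_X x\|_{Y_1}}{\|x\|_{X_0}}=\frac{\|T_Y y\|_{Y_1}}{c_1\|x\|_{X_0}}\le\frac{C_0}{c_1}\,\|T_Y y\|_{Y_1}.
\]
Since $x/\|x\|_{X_0}$ is a unit vector of $Z$, the left side of the transference inequality is $\le\frac{C_0}{c_1}\|T_Y y\|_{Y_1}$ for every unit $y\in\iota_0(Z)$, and taking the infimum over such $y$ gives the claim.

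Both parts of~\ref{Item::SubspaceArgument::Yes} then follow immediately. If $T_Y$ is strictly singular and $Z\subseteq X_0$ is infinite-dimensional, then $\iota_0(Z)$ is infinite-dimensional, so the right side of the transference inequality vanishes, hence $\inf_{x\in Z,\|x\|_{X_0}=1}\|T_X x\|_{X_1}=0$; as $Z$ was arbitrary, $T_X$ is strictly singular. If $T_Y$ is finitely strictly singular, applying the transference inequality to an arbitrary subspace $Z\subseteq X_0$ with $\dim Z\ge n$ and using $\dim\iota_0(Z)\ge n$ bounds the right side by $\frac{C_0}{c_1}b_n(T_Y)$; taking the supremum over such $Z$ yields $b_n(T_X)\le\frac{C_0}{c_1}b_n(T_Y)$, so $b_n(T_Y)\to0$ forces $b_n(T_X)\to0$ and $T_X$ is finitely strictly singular. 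Part~\ref{Item::SubspaceArgument::No} is the contrapositive in each case. I do not expect a genuine obstacle here; the only points requiring care are the bookkeeping with the quasi-norm equivalence constants of the two topological embeddings (choosing a normalized $\iota_0$-preimage and bounding $\|x\|_{X_0}$ from below) and using the correct description of each notion --- the infimum-over-infinite-dimensional-subspaces formulation for strict singularity and the Bernstein-number formulation $b_n\to0$ for finite strict singularity.
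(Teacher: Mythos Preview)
Your proof is correct and follows essentially the same approach as the paper: the paper's proof simply records the quasi-norm equivalence constants for $\iota_0,\iota_1$ and asserts that $b_n(T_X)\le C^2 b_n(T_Y)$, leaving the strict-singularity case to the reader, while you spell out precisely this transference inequality (with the sharper constant $C_0/c_1$) and the subspace bookkeeping behind it.
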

	
	Here in the lemma we have the commutative diagram \begin{tikzcd}
		Y_0\arrow{r}{T_Y}&Y_1\\X_0\arrow{r}{T_X}\arrow[u,hook,"\iota_0"]&X_1\arrow[u,hook,"\iota_1"]
	\end{tikzcd}.
	\begin{proof}
		Since $\iota_0,\iota_1$ are both topological embeddings, there is a $C>0$ such that for $k=0,1$ and  for all $x\in X_k$, $C^{-1}\|x\|_{X_k}\le\|\iota_kx\|_{Y_k}\le C\|x\|_{X_k}$. Both results on the finitely strictly singularity follow from the fact that $b_n(T_X)\le C^2b_n(T_Y)$. The strictly singularity follows from the same argument and we leave the details to the reader.
	\end{proof}
		
	
	We can decompose the proof of Theorems~\ref{Thm::ClassifyDom} and \ref{Thm::ClassifyRn} into a few critical cases.
	
	For embeddings between Besov spaces with different levels of smoothness, we have:
	
	\begin{prop}\label{Prop::SobEmbed}
		Let $0<p_0,p_1,q_0,q_1\le\infty$ and $s_0,s_1\in\R$ satisfy $s_0-s_1\ge\frac n{p_0}-\frac n{p_1}>0$, and $\Omega\subset\R^n$ be a bounded Lipschitz domain.
		\begin{enumerate}[(i)]
			\item\label{Item::SobEmbed::FSS} If $s_0-s_1>\frac n{p_0}-\frac n{p_1}$ or $q_0<q_1$, then $\Bs_{p_0q_0}^{s_0}(\R^n)\hookrightarrow \Bs_{p_1q_1}^{s_1}(\R^n)$ is finitely strictly singular. In particular $\Bs_{p_0q_0}^{s_0}(\Omega)\hookrightarrow \Bs_{p_1q_1}^{s_1}(\Omega)$ is also finitely strictly singular.
			\item\label{Item::SobEmbed::NSS} If $s_0-s_1=\frac n{p_0}-\frac n{p_1}$ and $q_0=q_1$, then $\Bs_{p_0q_0}^{s_0}(\Omega)\hookrightarrow \Bs_{p_1q_1}^{s_1}(\Omega)$ is not strictly singular. In particular $\Bs_{p_0q_0}^{s_0}(\R^n)\hookrightarrow \Bs_{p_1q_1}^{s_1}(\R^n)$ is also not strictly singular.
		\end{enumerate}
	\end{prop}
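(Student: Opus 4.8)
The plan is to transport both embeddings to the sequence spaces of Section~\ref{Section::SeqSpace} through the wavelet isomorphisms, and then apply Corollary~\ref{Cor::SeqEmbed1}, Lemma~\ref{Lem::TrivialNSS} and the transfer Lemma~\ref{Lem::SubspaceArgument}. Throughout I would abbreviate $s:=s_0-\tfrac n{p_0}+\tfrac n2$ and $t:=s_1-\tfrac n{p_1}+\tfrac n2$, so that $s-t=(s_0-s_1)-(\tfrac n{p_0}-\tfrac n{p_1})$; the standing assumption $s_0-s_1\ge\tfrac n{p_0}-\tfrac n{p_1}>0$ then says exactly $s\ge t$ and $p_0<p_1$, and in part~\ref{Item::SobEmbed::NSS} one moreover has $s=t$.

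For part~\ref{Item::SobEmbed::FSS} I would first handle $\R^n$: the isomorphism $\Lambda$ of \eqref{Eqn::Wavelet::Lambda} conjugates $\Bs_{p_0q_0}^{s_0}(\R^n)\hookrightarrow\Bs_{p_1q_1}^{s_1}(\R^n)$ to the diagonal inclusion $\ell^{q_0}(2^{js}\cdot\ell^{p_0})_{j=0}^\infty\hookrightarrow\ell^{q_1}(2^{jt}\cdot\ell^{p_1})_{j=0}^\infty$ (after the identification $\ell^p(\Z)\cong\ell^p$), and the hypothesis ``$s_0-s_1>\tfrac n{p_0}-\tfrac n{p_1}$ or $q_0<q_1$'' is precisely the hypothesis ``$s>t$ or $q_0<q_1$'' of Corollary~\ref{Cor::SeqEmbed1}; hence that sequence inclusion is finitely strictly singular, and so is the Besov embedding. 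For the domain statement I would use the universal extension operator $E$, which is a topological embedding $E:\Bs^{s}_{pq}(\Omega)\hookrightarrow\Bs^{s}_{pq}(\R^n)$ for every index, and feed the commutative square with vertical maps $E$ (for the two index pairs) and horizontal maps the embeddings on $\Omega$ and on $\R^n$ into Lemma~\ref{Lem::SubspaceArgument}~\ref{Item::SubspaceArgument::Yes}; the square commutes because $E$ of an inclusion is the inclusion of $E$.

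For part~\ref{Item::SobEmbed::NSS}, where $q_0=q_1=:q$ and $s=t$, I would prove the $\Omega$ statement first. Proposition~\ref{Prop::SubspaceWavelet}~\ref{Item::SubspaceWavelet::Small} makes $\Lambda^{-1}$ a topological embedding of $\ell^q(2^{js}\cdot\ell^{p}(R_j))_{j=A}^\infty$ into $\Bs_{pq}^{s}(\Omega)$ for both index pairs, and $\Lambda^{-1}$ intertwines the diagonal inclusion $\ell^{q}(2^{js}\cdot\ell^{p_0}(R_j))_{j\ge A}\to\ell^{q}(2^{js}\cdot\ell^{p_1}(R_j))_{j\ge A}$ with $\Bs_{p_0q}^{s_0}(\Omega)\hookrightarrow\Bs_{p_1q}^{s_1}(\Omega)$, both acting as the identity on wavelet coefficients. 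Since $p_0<p_1$, each diagonal factor $T_j:2^{js}\cdot\ell^{p_0}(R_j)\to2^{js}\cdot\ell^{p_1}(R_j)$ has operator norm exactly $1$ (at most $1$ because $\|\cdot\|_{\ell^{p_1}}\le\|\cdot\|_{\ell^{p_0}}$, at least $1$ on a single coordinate), so Lemma~\ref{Lem::TrivialNSS} gives that this sequence inclusion is not strictly singular, and Lemma~\ref{Lem::SubspaceArgument}~\ref{Item::SubspaceArgument::No} transfers this to $\Bs_{p_0q}^{s_0}(\Omega)\hookrightarrow\Bs_{p_1q}^{s_1}(\Omega)$. The $\R^n$ statement then follows by re-using the same extension square as in part~\ref{Item::SobEmbed::FSS}, this time via Lemma~\ref{Lem::SubspaceArgument}~\ref{Item::SubspaceArgument::No}.

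Since the substantive work is already packaged in Section~\ref{Section::SeqSpace}, I do not foresee a real obstacle. The two points that need care are (a) confirming that the exponent translation $s-t=(s_0-s_1)-(\tfrac n{p_0}-\tfrac n{p_1})$ matches the hypotheses of Corollary~\ref{Cor::SeqEmbed1} on the nose, and that $s=t$ in the critical case (this equality is exactly what makes each $T_j$ have norm one); and (b) verifying that the two commutative squares genuinely commute, i.e. that one and the same extension operator $E$ and one and the same wavelet system are used simultaneously for all the index pairs involved, which is precisely what the universal extension operator and the index-uniform form of Proposition~\ref{Prop::Wavelet} provide.
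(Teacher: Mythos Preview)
Your proposal is correct and follows the paper's approach essentially verbatim for part~\ref{Item::SobEmbed::FSS}. For part~\ref{Item::SobEmbed::NSS} the paper instead constructs the witnessing subspace by hand (taking $X=\Span\{\psi_{jm}^g:j\ge A\}$ for a single fixed $m,g$ and computing the two Besov norms directly), whereas you invoke Lemma~\ref{Lem::TrivialNSS} on the sequence side and then transfer via Lemma~\ref{Lem::SubspaceArgument}; this is the same idea packaged through the existing lemmas rather than unfolded explicitly.
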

	
	\begin{rmk}
		When $s_0-s_1>\frac n{p_0}-\frac n{p_1}$, the embedding $\Bs_{p_0q_0}^{s_0}(\Omega)\to \Bs_{p_1q_1}^{s_1}(\Omega)$ is indeed compact, in particular it is finitely strictly singular. The only interesting case on the domain $\Omega$ is the case where $s_0-s_1=\frac n{p_0}-\frac n{p_1}$ and $q_0<q_1$.
	\end{rmk}
	
	\begin{proof}[Proof of Proposition~\ref{Prop::SobEmbed}]
		Take an $\eps>0$ where $p_0,q_0,p_1,q_1>\eps$ and $|s_0|,|s_1|<\eps^{-1}$. By Proposition~\ref{Prop::Wavelet}~\ref{Item::Wavelet::Isom}, $\Lambda:\Bs_{p_kq_k}^{s_k}(\R^n)\to \ell^{q_k}(2^{j(s_k-\frac n{p_k}+\frac n2)}\cdot\ell^{p_k}(\Z))_{j=0}^\infty$ are isomorphisms for $k=0,1$.
		
		\medskip\noindent\ref{Item::SobEmbed::FSS}:    
		By Corollary~\ref{Cor::SeqEmbed1} the embedding $\ell^{q_0}(2^{j(s_0-\frac n{p_0}+\frac n2)}\cdot\ell^{p_0}(\Z))_{j=0}^\infty\hookrightarrow\ell^{q_1}(2^{j(s_1-\frac n{p_1}+\frac n2)}\cdot\ell^{p_1}(\Z))_{j=0}^\infty$ is finitely strictly singular. Therefore $\Bs_{p_0q_0}^{s_0}(\R^n)\hookrightarrow \Bs_{p_1q_1}^{s_1}(\R^n)$ is also finitely strictly singular. Applying Lemma~\ref{Lem::SubspaceArgument}  with $X_k=\Bs_{p_kq_k}^{s_k}(\Omega)$, $Y_k=\Bs_{p_kq_k}^{s_k}(\R^n)$ and $\iota_k=E|_{X_k}$ (see Lemma~\ref{Lem::Extension}) for $k=0,1$, we get that $\Bs_{p_0q_0}^{s_0}(\Omega)\to \Bs_{p_1q_1}^{s_1}(\Omega)$ is also finitely strictly singular.
		
		\medskip\noindent\ref{Item::SobEmbed::NSS}: When $s_0-s_1=\frac n{p_0}-\frac n{p_1}$ and $q_0=q_1$, we are going to find an infinite dimensional subspace $X$ such that the embedding restricted to $X$ has bounded inverse. 
		
		Let $A\ge1$ be as in Proposition~\ref{Prop::SubspaceWavelet}. Then, there exists $m\in Q_A$ such that $\supp\psi_{jm}^g\subset\Omega$ for all $j\ge A$ and $g\in G_j(=\{0,1\}^n\backslash\{0\}^n)$. We fix a $g\in \{0,1\}^n\backslash\{0\}^n$ and consider $X:=\Span\{\psi_{jm}^g:j\ge A\}$. We see that $X$ is infinite dimensional and is a subspace of both $\Bs_{p_0q_0}^{s_0}(\Omega)$ and $\Bs_{p_0q_0}^{s_0}(\R^n)$. By Proposition~\ref{Prop::Wavelet}~\ref{Item::Wavelet::Isom} we have, for every $(a_j)_{j=A}^\infty \subset\R$,
		\begin{equation*}
			\Big\|\sum_{j=A}^\infty a_j\psi_{jm}^g\Big\|_{\Bs_{p_0q_0}^{s_0}(\R^n)}\approx\big\|\big(2^{j(s_0-\frac n{p_0}+\frac n2)}a_j\big)_{j=A}^\infty\big\|_{\ell^{q_0}}=\big\|\big(2^{j(s_1-\frac n{p_1}+\frac n2)}a_j\big)_{j=A}^\infty\big\|_{\ell^{q_1}}\approx\Big\|\sum_{j=A}^\infty a_j\psi_{jm}^g\Big\|_{\Bs_{p_1q_1}^{s_1}(\R^n)}.
		\end{equation*}
		By Lemma~\ref{Lem::TrivialInclusion} (and a possible change of the implied constants), the same estimate holds if we replace the norm $\|\cdot\|_{\Bs_{p_kq_k}^{s_k}(\R^n)}$ by $\|\cdot\|_{\Bs_{p_kq_k}^{s_k}(\Omega)}$ ($k=0,1$). This completes the proof.   
	\end{proof}
	
	For embeddings between Besov spaces with the same level of smoothness, we have:
	\begin{prop}\label{Prop::DomEmbed}
		Let $s\in\R$, $0<p_1\le p_0\le\infty$ and $0<q_0\le q_1\le \infty$. Let $\Omega\subset\R^n$ be a bounded Lipschitz domain. Consider the embedding $\Bs_{p_0q_0}^{s}(\Omega)\hookrightarrow \Bs_{p_1q_1}^{s}(\Omega)$.
		\begin{enumerate}[(i)]
			\item\label{Item::DomEmbed::FSS} Suppose  $q_0<q_1$ and $p_1<p_0=\infty$. Then the embedding is finitely strictly singular.
			\item\label{Item::DomEmbed::SS} Suppose $q_0<q_1$, and either $p_0<\infty$ or $p_0=p_1=\infty$. Then the embedding is strictly singular but not finitely strictly singular.
			\item\label{Item::DomEmbed::NSS} Suppose $q_0=q_1$. Then the embedding is not strictly singular.
		\end{enumerate}
	\end{prop}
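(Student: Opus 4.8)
The plan is to derive all three parts from the sequence-space classification already obtained in Corollary~\ref{Cor::SeqEmbed2}, transporting it to Besov spaces via the wavelet description of $\Bs^s_{pq}(\Omega)$ in Proposition~\ref{Prop::SubspaceWavelet} and the commutative-square principle of Lemma~\ref{Lem::SubspaceArgument}; the analytic content is entirely in Section~\ref{Section::SeqSpace}, and what remains is essentially bookkeeping.

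First I would fix $\eps>0$ with $p_0,p_1,q_0,q_1>\eps$ and $|s|<\eps^{-1}$, and then, \emph{simultaneously for the two parameter triples} $(p_0,q_0,s)$ and $(p_1,q_1,s)$, invoke one universal extension operator $E$ (Lemma~\ref{Lem::Extension}), one wavelet isomorphism $\Lambda$ (Proposition~\ref{Prop::Wavelet}), and one choice of index sets $R_j\subset S_j\subset Q_j\times G_j$ and constant $A$ (Proposition~\ref{Prop::SubspaceWavelet}). Using a single common $E$ and $\Lambda$ is the crucial point, because it makes the Besov embedding $T\colon\Bs^s_{p_0q_0}(\Omega)\hookrightarrow\Bs^s_{p_1q_1}(\Omega)$ the bottom arrow of one, and the top arrow of another, commuting square of the form in Lemma~\ref{Lem::SubspaceArgument}. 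In the \emph{$S$-square} the vertical maps are the topological embeddings $\Lambda\circ E\colon\Bs^s_{p_kq_k}(\Omega)\to\ell^{q_k}(2^{j(s-\frac n{p_k}+\frac n2)}\cdot\ell^{p_k}(S_j))_{j=0}^\infty$ from Proposition~\ref{Prop::SubspaceWavelet}~\ref{Item::SubspaceWavelet::Big}, $T$ is the bottom arrow, and the top arrow is the diagonal inclusion $\widehat\iota_S$ of these two sequence spaces. In the \emph{$R$-square} the vertical maps are the topological embeddings $\Lambda^{-1}\colon\ell^{q_k}(2^{j(s-\frac n{p_k}+\frac n2)}\cdot\ell^{p_k}(R_j))_{j=A}^\infty\to\Bs^s_{p_kq_k}(\Omega)$ from Proposition~\ref{Prop::SubspaceWavelet}~\ref{Item::SubspaceWavelet::Small}, $T$ is the top arrow, and the bottom arrow is the diagonal inclusion $\widehat\iota_R$. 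Since $\#S_j=2^{(j+A)n}$ and $\#R_j=2^{(j-A)n}$, a shift of scale $j\mapsto j\pm A$ together with pulling out one scale-independent (but $p$-dependent) scalar factor from source and target (cf.\ Convention~\ref{Conv::LambdaX}) identifies $\widehat\iota_S$ with a map exactly of the type classified in Corollary~\ref{Cor::SeqEmbed2} (with $s+\tfrac n2$ in place of $s$), and identifies $\widehat\iota_R$ with a map that contains such a map, over the scales $\ge A$, as a sub-map isometrically embedded in both source and target. Hence $\widehat\iota_S$ and $\widehat\iota_R$ inherit the trichotomy of Corollary~\ref{Cor::SeqEmbed2}~\ref{Item::SeqEmbed2::FSS}--\ref{Item::SeqEmbed2::NSS}; for the ``contains'' case one uses the trivial remark that a bounded map is not (finitely) strictly singular once it restricts to a not (finitely) strictly singular map on a subspace isometrically embedded in both source and target.

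With this dictionary the three parts follow by pairing the right clause of Corollary~\ref{Cor::SeqEmbed2} with the right clause of Lemma~\ref{Lem::SubspaceArgument}. For the positive assertions --- finitely strict singularity in \ref{Item::DomEmbed::FSS}, strict singularity in \ref{Item::DomEmbed::SS} --- I would use the $S$-square: $\widehat\iota_S$ is finitely strictly singular when $p_1<p_0=\infty$, $q_0<q_1$ by Corollary~\ref{Cor::SeqEmbed2}~\ref{Item::SeqEmbed2::FSS}, and strictly singular in the remaining range of \ref{Item::DomEmbed::SS} by Corollary~\ref{Cor::SeqEmbed2}~\ref{Item::SeqEmbed2::SS} (the sub-case $p_0=p_1<\infty$, not literally listed there, being covered directly by Proposition~\ref{Prop::SeqEmbed2}), so $T$ has the same property by Lemma~\ref{Lem::SubspaceArgument}~\ref{Item::SubspaceArgument::Yes}. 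For the negative assertions --- ``not finitely strictly singular'' in \ref{Item::DomEmbed::SS}, ``not strictly singular'' in \ref{Item::DomEmbed::NSS} --- I would use the $R$-square: $\widehat\iota_R$ is not finitely strictly singular by Corollary~\ref{Cor::SeqEmbed2}~\ref{Item::SeqEmbed2::SS} / Proposition~\ref{Prop::SeqEmbed2NotFSS} (for $p_0=p_1<\infty$, argued directly: at a single wavelet scale the embedding is, up to the fixed wavelet-norm constants, the identity of $\ell^{p_0}(R_j)$, whence $b_{\# R_j}(T)\ge c>0$), and it is not strictly singular when $q_0=q_1$ by Corollary~\ref{Cor::SeqEmbed2}~\ref{Item::SeqEmbed2::NSS}; hence $T$ inherits the failure by Lemma~\ref{Lem::SubspaceArgument}~\ref{Item::SubspaceArgument::No}. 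It is exactly here that \emph{both} $R_j$ and $S_j$ are needed: Lemma~\ref{Lem::SubspaceArgument} transports positive singularity from top to bottom and negative singularity from bottom to top, so the $S$-square (sequence spaces on top) feeds the positive results into $T$ while the $R$-square (sequence spaces on the bottom) feeds the negative ones.

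I do not expect a genuine obstacle --- the hard work is Section~\ref{Section::SeqSpace} --- so the care needed is all in the two middle paragraphs: checking that the two squares truly commute, which is precisely why one common $E$ and one common $\Lambda$ must be used for both parameter triples; matching up the scale ranges and the scale-independent scalar weights so that $\widehat\iota_S$ and $\widehat\iota_R$ are \emph{recognised} as, not merely seen to resemble, the maps classified in Corollary~\ref{Cor::SeqEmbed2}; and supplying by hand the one borderline case ($p_0=p_1<\infty$ in \ref{Item::DomEmbed::SS}) that the statement of Corollary~\ref{Cor::SeqEmbed2} does not spell out.
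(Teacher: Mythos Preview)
Your proposal is correct and follows essentially the same route as the paper: set up the two commuting squares via $\Lambda\circ E$ (the $S$-square) and $\Lambda^{-1}$ (the $R$-square), read off the (finitely) strictly singular / non-singular behaviour of the sequence-space inclusions from Corollary~\ref{Cor::SeqEmbed2}, and transport to $T$ by the two directions of Lemma~\ref{Lem::SubspaceArgument}. Your explicit flagging of the borderline case $p_0=p_1<\infty$ in \ref{Item::DomEmbed::SS} (covered by Proposition~\ref{Prop::SeqEmbed2} and the $p_0=p_1$ clause of Proposition~\ref{Prop::SeqEmbed2NotFSS} rather than by the literal statement of Corollary~\ref{Cor::SeqEmbed2}~\ref{Item::SeqEmbed2::SS}) is a point the paper's proof glosses over.
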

	\begin{proof}
		Still we take an $\eps>0$ be such that $p_0,q_0,p_1,q_1>\eps$ and $|s_0|,|s_1|<\eps^{-1}$. Let $A\ge1$ and $R_j,S_j\subset Q_j\times G_j$ for $j\ge A$ be from Proposition~\ref{Prop::SubspaceWavelet} where we choose fixed subsets $V\Subset\Omega\Subset U$. Recall that $\# R_j=2^{(j-A)n}$ for $j\ge A$, $\#S_j=2^{(j+A)n}$ for $j\ge0$, and for $k=0,1$ both arrows in  $\ell^{q_k}(2^{j(s-\frac n{p_k}+\frac n2)}\cdot\ell^{p_k}(R_j))_{j=A}^\infty\xrightarrow{\Lambda^{-1}} \Bs_{p_kq_k}^{s}(\Omega)\xrightarrow{\Lambda\circ E}\ell^{q_k}(2^{j(s_k-\frac n{p_k}+\frac n2)}\cdot\ell^{p_k}(S_j))_{j=0}^\infty$ are bounded.
		
		Denote $J:\Bs_{p_0q_0}^{s}(\Omega)\hookrightarrow \Bs_{p_1q_1}^{s}(\Omega)$, $I_R:\ell^{q_0}(2^{j(s-\frac n{p_0}+\frac n2)}\cdot\ell^{p_0}(R_j))_{j=A}^\infty\to \ell^{q_1}(2^{j(s-\frac n{p_1}+\frac n2)}\cdot\ell^{p_1}(R_j))_{j=A}^\infty$ and $I_S:\ell^{q_0}(2^{j(s-\frac n{p_0}+\frac n2)}\cdot\ell^{p_0}(S_j))_{j=0}^\infty\to \ell^{q_1}(2^{j(s-\frac n{p_1}+\frac n2)}\cdot\ell^{p_1}(S_j))_{j=0}^\infty$ for convenience. Recall from Corollary~\ref{Cor::SeqEmbed2} we see that for \ref{Item::SeqEmbed2::FSS}, \ref{Item::SeqEmbed2::SS} and \ref{Item::SeqEmbed2::NSS} respectively: 
		\begin{itemize}
			\item Suppose  $q_0<q_1$ and $p_1<p_0=\infty$. Then $I_S$ is finitely strictly singular.
			\item Suppose  $q_0<q_1$, and either $p_0<\infty$ or $p_0=p_1=\infty$. Then $I_S$ is strictly singular and $I_R$ is not finitely strictly singular.
			\item Suppose $q_0=q_1$. Then $I_R$ is not strictly singular.
		\end{itemize}
		The results then follow from applying Lemma~\ref{Lem::SubspaceArgument} with $(T_X,T_Y,\iota)=(I_R,J,\Lambda^{-1})$ and $(J,I_S,\Lambda\circ E)$.
	\end{proof}
	
	\begin{proof}[Proof of Theorem~\ref{Thm::ClassifyDom}]
		\ref{Item::ClassifyDom::Not} and \ref{Item::ClassifyDom::Cpt} follow from Corollary~\ref{Cor::EmbedAllIndex}~\ref{Item::EmbedAllIndex::Dom}. 
		
		\ref{Item::ClassifyDom::FSS}: By Corollary~\ref{Cor::EmbedAllIndex}~\ref{Item::EmbedAllIndex::Dom} the map is noncompact. For finitely strictly singularity, when $s_0-s_1=\frac n{p_0}-\frac n{p_1}$ the result follows from Proposition~\ref{Prop::SobEmbed}~\ref{Item::SobEmbed::FSS}; when $s_0-s_1=0$ the result follows from Proposition~\ref{Prop::DomEmbed}~\ref{Item::DomEmbed::FSS}.
		
		\ref{Item::ClassifyDom::SS}: The result follows from Proposition~\ref{Prop::DomEmbed}~\ref{Item::DomEmbed::SS}.

		\ref{Item::ClassifyDom::NSS}: When $s_0-s_1=\frac n{p_0}-\frac n{p_1}$ the result follows from Proposition~\ref{Prop::SobEmbed}~\ref{Item::SobEmbed::NSS}; when $s_0-s_1=0$ the result follows from Proposition~\ref{Prop::DomEmbed}~\ref{Item::DomEmbed::NSS}.
	\end{proof}
	
	\begin{proof}[Proof of Theorem~\ref{Thm::ClassifyRn}]
		\ref{Item::ClassifyRn::Not} follows from Corollary~\ref{Cor::EmbedAllIndex}~\ref{Item::EmbedAllIndex::Sob}. 
		
		\ref{Item::ClassifyRn::FSS}: By Corollary~\ref{Cor::EmbedAllIndex}~\ref{Item::EmbedAllIndex::Sob} the map is noncompact. The finitely strictly singularity follows from Proposition~\ref{Prop::SobEmbed}~\ref{Item::SobEmbed::FSS}.
		
		\ref{Item::ClassifyRn::NSS}: When $p_0>p_1$, i.e. the case $s_0-s_1=\frac n{p_0}-\frac n{p_1}>0$ and $q_0=q_1$, the result follows from Proposition~\ref{Prop::SobEmbed}~\ref{Item::SobEmbed::NSS}. The only remaining case left from the previous discussion is the case $p_0=p_1$. Recall that either $s_0-s_1>0$, or $s_0-s_1=0$ but $q_0\le q_1$.
		
		Indeed, choose an $\eps>0$ such that $p_0(=p_1),q_0,q_1>\eps$ and $|s_0|,|s_1|<\eps^{-1}$. Let $\psi_{jm}^g$ be the wavelet elements in Proposition~\ref{Prop::Wavelet}. We can consider the infinite dimensional subspace $X=\Span\{\psi_{0m}^0:m\in\Z^n\}$. By Proposition~\ref{Prop::Wavelet}~\ref{Item::Wavelet::Isom}, for every $(a_m)_{m\in\Z^n}\subset\R$,
		\begin{equation*}
			\Big\|\sum_{m\in\Z^n}a_m\psi_{0m}^0\Big\|_{\Bs_{p_0q_0}^{s_0}(\R^n)}\approx\|(a_m\big)_{m\in\Z^n}\|_{\ell^{p_0}}=\|(a_m\big)_{m\in\Z^n}\|_{\ell^{p_1}}\approx\Big\|\sum_{m\in\Z^n}a_m\psi_{0m}^0\Big\|_{\Bs_{p_1q_1}^{s_1}(\R^n)}.
		\end{equation*}
		Thus the embedding restricted to $X$ has bounded inverse, completing the whole proof.
	\end{proof}

	\section{On Homogeneous Besov Embeddings}\label{Section::HomoEmbed}
	
	

	
	
	As a bonus result to Theorem~\ref{Thm::ClassifyRn}, we state and prove the analogous classification result on homogeneous Besov spaces by reviewing the methods used in the previous discussion.

	Recall that the homogeneous Besov space $\dot \Bs_{pq}^s(\R^n)$ is defined to be the element in the quotient space $\dot\Ss'(\R^n)=\Ss'(\R^n)/\{\text{polynomials}\}$ for which
	\[
	\|f\|_{\dot \Bs_{pq}^s(\R^n)} = \|f\|_{\dot \Bs_{pq}^s(\R^n;\varphi)} := \Big(\sum_{j\in\Z} \| 2^{js}\varphi_j\ast f\|_{L^p(\R^n)}^q\Big)^{1/q} <\infty.
	\]
	Here $(\varphi_j)_{j\in\Z}\subset\dot\Ss(\R^n)$ is a fixed family of Schwartz functions with all moment vanishings, such that $\supp\hat\varphi_0\subset\{\frac12<|\xi|<2\}$, $\hat\varphi_j(\xi)=\hat\varphi_0(2^{-j}\xi)$ for all $j\in\Z$, and $\sum_{j\in\Z}\hat\varphi_j(\xi)=1$ for all $\xi\neq0$. See e.g. \cite[Chapter~5]{TriebelTheoryOfFunctionSpacesI} for details.
	\begin{thm}\label{Thm::ClassifyHomo}
		Let $p_0,p_1,q_0,q_1\in(0,\infty]$, $s_0,s_1\in\R$. Consider the embedding $\dot \Bs_{p_0q_0}^{s_0}(\R^n)\hookrightarrow\dot \Bs_{p_1q_1}^{s_1}(\R^n)$.
		\begin{enumerate}[(i)]
			\item\label{Item::ClassifyHomo::Not} $\dot \Bs_{p_0q_0}^{s_0}(\R^n)\not\subset \dot \Bs_{p_1q_1}^{s_1}(\R^n)$, i.e. there is no embedding, if and only if one of the following occurs:
			\begin{itemize}
				\item $s_0-s_1\neq\frac n{p_0}-\frac n{p_1}$;
				\item $p_0>p_1$ (i.e. $\frac n{p_0}-\frac n{p_1}<0$) or $q_0>q_1$.
			\end{itemize}
			\item\label{Item::ClassifyHomo::FSS} The embedding is non-compact but finitely strictly singular iff $s_0-s_1=\frac n{p_0}-\frac n{p_1}>0$ and $q_0<q_1$.
			\item\label{Item::ClassifyHomo::NSS} The embedding is not strictly singular if and only if one of the following occurs:
			\begin{itemize}
				\item $s_0-s_1=\frac n{p_0}-\frac n{p_1}=0$ and $q_0\le q_1$;
				\item $s_0-s_1=\frac n{p_0}-\frac n{p_1}\ge0$ and $q_0=q_1$.
			\end{itemize}
		\end{enumerate}
	\end{thm}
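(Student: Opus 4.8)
The plan is to follow the route of Sections~\ref{Section::SeqSpace} and~\ref{Section::PfThm}: transport the embedding to a sequence-space embedding via a wavelet isomorphism, then read off the three assertions from the sequence-space results already established. The only structural difference from the inhomogeneous case is that the dyadic scale index $j$ now runs over all of $\Z$, not just over $\N_0$. Concretely, the homogeneous counterpart of Proposition~\ref{Prop::Wavelet}, using homogeneous wavelets $\{\psi_{jm}^g:j\in\Z,\ m\in Q_j,\ g\in\{0,1\}^n\backslash\{0\}^n\}$ (see e.g.\ \cite{TriebelSpacesOnDomains}), provides for all $p,q\in(0,\infty]$ and $s\in\R$ an isomorphism
\begin{equation*}
    \dot\Lambda:\dot\Bs_{pq}^s(\R^n)\xrightarrow{\simeq}\ell^q\big(\Z;\,2^{j(s-\frac np+\frac n2)}\cdot\ell^p(\Z)\big)_{j\in\Z},
\end{equation*}
once we fix identifications $Q_j\times(\{0,1\}^n\backslash\{0\}^n)\simeq\Z$. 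Writing $a_k:=s_k-\frac n{p_k}+\frac n2$, the embedding $\dot\Bs_{p_0q_0}^{s_0}(\R^n)\hookrightarrow\dot\Bs_{p_1q_1}^{s_1}(\R^n)$ is carried to $\ell^{q_0}(\Z;2^{ja_0}\ell^{p_0}(\Z))_{j\in\Z}\hookrightarrow\ell^{q_1}(\Z;2^{ja_1}\ell^{p_1}(\Z))_{j\in\Z}$; since Bernstein numbers are preserved under isometric isomorphisms, I may re-index $\Z\simeq\N_0$ whenever it is convenient to invoke Proposition~\ref{Prop::SeqEmbed1}.

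For part~\ref{Item::ClassifyHomo::Not} I would test boundedness on finitely supported sequences. Feeding in a sequence supported in a single level $j$, boundedness forces $2^{j(a_1-a_0)}\sup_{x_j\ne0}\big(\|x_j\|_{\ell^{p_1}}/\|x_j\|_{\ell^{p_0}}\big)$ to be bounded uniformly in $j\in\Z$; the inner supremum is finite (and equal to $1$) exactly when $p_0\le p_1$, and then uniform boundedness over all $j\in\Z$ forces $a_0=a_1$, i.e.\ $s_0-s_1=\frac n{p_0}-\frac n{p_1}$. This rigidity --- an equality in place of the inequality appearing in Theorem~\ref{Thm::ClassifyRn} --- is the one genuinely new point, and it is exactly what the unboundedness of the scale range below (as well as above) buys us. Conversely, once $a_0=a_1=:a$ and $p_0\le p_1$, the rescaling $J_a:(x_j)_j\mapsto(2^{ja}x_j)_j$ reduces the map to $\ell^{q_0}(\Z;\ell^{p_0}(\Z))\hookrightarrow\ell^{q_1}(\Z;\ell^{p_1}(\Z))$, which is bounded precisely when in addition $q_0\le q_1$ (boundedness by H\"older as in Remark~\ref{Rmk::LambdaX}; necessity by restricting to a single inner coordinate, which recovers $\ell^{q_0}(\Z)\hookrightarrow\ell^{q_1}(\Z)$). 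Negating ``$a_0=a_1$ and $p_0\le p_1$ and $q_0\le q_1$'' then reproduces the list in~\ref{Item::ClassifyHomo::Not}.

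For parts~\ref{Item::ClassifyHomo::FSS} and~\ref{Item::ClassifyHomo::NSS} I would work under the standing hypothesis that the embedding exists, so that after $J_a$ it is $\ell^{q_0}(\Z;\ell^{p_0}(\Z))\hookrightarrow\ell^{q_1}(\Z;\ell^{p_1}(\Z))$ with $p_0\le p_1$ and $q_0\le q_1$; note it is never compact (the level-$0$ unit vectors $e_0\otimes e_k$, $k\in\Z$, stay uniformly apart in the target). If $p_0<p_1$ and $q_0<q_1$, Proposition~\ref{Prop::SeqEmbed1} yields finite strict singularity; transported back, this is exactly the case $s_0-s_1=\frac n{p_0}-\frac n{p_1}>0$, $q_0<q_1$, giving the ``if'' direction of~\ref{Item::ClassifyHomo::FSS}. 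If $p_0=p_1$, the subspace of sequences supported in one fixed level is $\simeq\ell^{p_0}(\Z)$ and the embedding restricts to the identity on it; if $q_0=q_1$ with $p_0<p_1$, the subspace of sequences of the form $(c_je_{k_0})_{j\in\Z}$ with $k_0$ fixed is $\simeq\ell^{q_0}(\Z)$ and the embedding restricts to an isometry on it. Hence the map fails to be strictly singular whenever $p_0=p_1$ or $q_0=q_1$, and is finitely strictly singular (in particular strictly singular) otherwise; this simultaneously gives the ``if'' direction of~\ref{Item::ClassifyHomo::NSS} and the ``only if'' directions of both~\ref{Item::ClassifyHomo::FSS} and~\ref{Item::ClassifyHomo::NSS}. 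Unwinding ``$p_0=p_1$ or $q_0=q_1$'' through $a_0=a_1$ turns it into the two bulleted alternatives of~\ref{Item::ClassifyHomo::NSS}, completing the proof.

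The step I expect to require the most care is not the sequence-space analysis --- which is essentially a rerun of Section~\ref{Section::SeqSpace} --- but the setup of the homogeneous wavelet isomorphism itself: identifying the admissible parameter range and handling the convergence of the wavelet expansion modulo polynomials. Conceptually, the only new ingredient beyond Theorem~\ref{Thm::ClassifyRn} is the rigidity $s_0-s_1=\frac n{p_0}-\frac n{p_1}$ forced by the bi-infinite scale range.
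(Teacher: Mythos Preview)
Your proposal is correct and follows essentially the same route as the paper: transfer to bi-infinite sequence spaces via a homogeneous wavelet isomorphism, use the rescaling $J_a$ to strip the weights, then apply Proposition~\ref{Prop::SeqEmbed1} for the finitely strictly singular case and exhibit explicit infinite-dimensional subspaces (single level for $p_0=p_1$, single inner coordinate for $q_0=q_1$) for the non--strictly-singular cases. The only cosmetic differences are the normalization of the wavelet weights (you carry the extra $+\tfrac n2$ from Proposition~\ref{Prop::Wavelet}, while the paper's homogeneous version drops it) and that the paper invokes Lemma~\ref{Lem::TrivialNSS} rather than your direct subspace for $q_0=q_1$; neither affects the argument.
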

	If we adapt the notation of index sets in Proposition~\ref{Prop::Wavelet}, the orthonormal basis in the proof below can be written as $\{\dot\psi_{jm}^g:j\in\Z,m\in 2^{-j}\Z^n, g\in \{0,1\}^n\backslash\{0\}^n\}\subset\dot\Ss(\R^n)$. It is more convenient to use  identifications $(2^{-j}\Z^n)\times(\{0,1\}^n\backslash\{0\}^n)\simeq\N_0$ on the index $(m,g)$, because we do not need the analogy of Proposition~\ref{Prop::Wavelet}~\ref{Item::Wavelet::Supp} in the proof.
	\begin{proof}
		Firstly the wavelet decomposition holds for homogeneous function spaces, see e.g. \cite[Theorem~6.16]{FrazierJawerthWeissWavelet}: there is a collection $\{\dot\psi_{jv}:j\in\Z,v\in \N_0\}\subset\dot\Ss(\R^n)$ which forms an orthonormal basis to $L^2(\R^n)$, such that for every $f\in\dot\Ss'(\R^n)$ we have decomposition $f=\sum_{j\in\Z}\sum_{v=0}^\infty\dot(f,\dot\psi_{jv})\dot\psi_{jv}$ with the summation converges in $\dot\Ss'(\R^n)$. Moreover, $\Lambda (f):=\{(f,\dot\psi_{jv}):v\in\N_0\}_{j\in\Z}$ defines a map which gives the isomorphisms
		\begin{equation*}
			\Lambda:\dot\Bs_{pq}^s(\R^n)\xrightarrow{\simeq}\ell^q(2^{j(s-\frac np)}\cdot\ell^p)_{j=-\infty}^\infty,\qquad \text{for all }0<p,q\le\infty\text{ and }s\in\R.
		\end{equation*}
		Therefore it is equivalent to consider the embedding $\ell^{q_0}(2^{j(s_0-\frac n{p_0})}\cdot\ell^{p_0})_{j=-\infty}^\infty\hookrightarrow\ell^{q_1}(2^{j(s_1-\frac n{p_1})}\cdot\ell^{p_1})_{j=-\infty}^\infty$.
		
		\smallskip\noindent
		\ref{Item::ClassifyHomo::Not}: In this assumption $\ell^{q_0}(2^{j(s_0-\frac n{p_0})}\cdot\ell^{p_0})_{j=-\infty}^\infty\not\subset\ell^{q_1}(2^{j(s_1-\frac n{p_1})}\cdot\ell^{p_1})_{j=-\infty}^\infty$, thus $\dot \Bs_{p_0q_0}^{s_0}(\R^n)\not\subset \dot \Bs_{p_1q_1}^{s_1}(\R^n)$. 
		
		To be more precise, when $s_0-s_1\neq\frac n{p_0}-\frac n{p_1}$ the result follows from that $2^{j(s_0-\frac n{p_1})}\big/2^{j(s_0-\frac n{p_0})}\to\infty$ for either $j\to+\infty$ or $j\to-\infty$. 
		
		When $s_0-s_1=\frac n{p_0}-\frac n{p_1}=:s$, the map $J_s(x_j)_{j\in\Z}:=(2^{js}x_j)_{j\in\Z}$ defines isomorphisms that for $i=0,1$, $J_s:\ell^{q_i}(2^{js}\cdot\ell^{p_i})_{j\in\Z}\to\ell^{q_i}(\Z;\ell^{p_i})$ . Since $\ell^{q_0}(\ell^{p_0})\not\subset\ell^{q_1}(\ell^{p_1})$ holds when $q_0>q_1$ or $p_0>p_1$, pulling back from $J_s$ we get the result.
		
		\smallskip\noindent
		\ref{Item::ClassifyHomo::FSS}: In this assumption, applying Proposition~\ref{Prop::SeqEmbed1} the map $\ell^{q_0}(\Z;\ell^{p_0})\hookrightarrow\ell^{q_1}(\Z;\ell^{p_1})$ is finitely strictly singular. Clearly it is non-compact. Pulling back from $J_s$, we get the non-compactness and finitely strictly singularity of $\ell^{q_0}(2^{j(s_0-\frac n{p_0})}\cdot\ell^{p_0})_{j=-\infty}^\infty\hookrightarrow\ell^{q_1}(2^{j(s_1-\frac n{p_1})}\cdot\ell^{p_1})_{j=-\infty}^\infty$, so is $\dot \Bs_{p_0q_0}^{s_0}(\R^n)\hookrightarrow \dot \Bs_{p_1q_1}^{s_1}(\R^n)$.
		
		\smallskip\noindent\ref{Item::ClassifyHomo::NSS}: When $q_0=q_1$, the result follows from Lemma~\ref{Lem::TrivialNSS}; when $p_0=p_1$ the result from the fact that for every $j\in\Z$, the spaces $2^{j(s_0-\frac n{p_0})}\cdot\ell^{p_0}=2^{j(s_1-\frac n{p_1})}\cdot\ell^{p_1}$ and have the same norms.
	\end{proof}

	\section{Further Remarks}\label{Section::Further}

	We conclude our paper with a few remarks and open questions.
	
	In Theorems~\ref{Thm::ClassifyDom} and \ref{Thm::ClassifyRn}, we observe the impact of differences between function spaces on bounded Lipschitz domains \(\Omega\) and on \(\mathbb{R}^n\). A natural question that arises is what results can be obtained for embeddings on unbounded open sets (say unbounded uniform domains).
	
	We believe that the upper bounds in Proposition~\ref{Prop::SeqEmbed1} could be sharpened, and that it should be possible to derive corresponding lower bounds. This could lead to improved estimates on the rate of decay for Bernstein numbers between Besov spaces when the embedding is non-compact but finitely strictly singular. Note that although we do not have a canonical definition of Besov norms, the decay rates of their Bernstein numbers are determined up to a constant multiple which still make sense. The case where \(\Omega = \mathbb{R}^n\) is particularly intriguing, which is equivalent to computing the decay rate of $b_n\big(\ell^{q_0}(\ell^{p_0})\to\ell^{q_1}(\ell^{p_1})\big)$.
	
	Given the obtained results for Besov spaces, it seems natural to ask about the inner structure for non-compact embeddings between Triebel-Lizorkin spaces. This could be quite interesting, given their different behaviors of the corresponding sequence spaces from the wavelet decomposition, which makes the study of Triebel-Lizorkin spaces more challenging. Also, it is worth noting that the obtained results for Besov spaces could be used for non-compact Sobolev embeddings which will generalize and extend results from \cite{LangMihula} and \cite{BouGro}. 

	\begin{ack}
		The author thanks Luis Rodríguez-Piazza for some helpful discussion.
	\end{ack}

	\bibliographystyle{amsalpha} 
	\bibliography{reference}

		
		
		

		
\end{document}